\newtheorem{thm}{Theorem}[section]
\newtheorem{lemma}[thm]{Lemma}
\newtheorem{prop}[thm]{Proposition}
\newtheorem{cor}[thm]{Corollary}
\theoremstyle{remark}
\newtheorem{rem}[thm]{Remark}
\newtheorem{ex}[thm]{Example}
\theoremstyle{definition}
\newtheorem{defn}[thm]{Definition}
\newtheoremstyle{Claim}{}{}{\itshape}{}{\itshape\bfseries}{:}{ }{#1}
\theoremstyle{Claim}
\newcommand{\T}{{\mathbb{T}}}
\newcommand{\Z}{{\mathbb{Z}}}
\newcommand{\R}{\mathbb{R}}
\newcommand{\eps}{\varepsilon}
\DeclareMathOperator{\esssup}{ess\,sup}
\theoremstyle{plain}
\newcommand{\dx}{\Delta x}
\newcommand{\dy}{\Delta y}
\newcommand{\dt}{\Delta t}
\newcommand{\D}{\Delta}
\newcommand{\normdiscrete}[1]{\| #1\|_{W^2(\mathcal{M})}}
\begin{document}

\title[Quantitative and qualitative properties for Hamilton-Jacobi PDEs]{Quantitative and qualitative properties for Hamilton-Jacobi PDEs via the nonlinear adjoint method}

\author{Fabio Camilli}
\address{SBAI, Sapienza Universit\`{a} di Roma,
	via A.Scarpa 14, 00161 Roma (Italy)}
\curraddr{}
\email{fabio.camilli@uniroma1.it}

\author{Alessandro Goffi}
\address{Dipartimento di Matematica e Informatica ``Ulisse Dini'', Universit\`a degli Studi di Firenze, 
viale G. Morgagni 67/A, 50134 Firenze (Italy)}
\curraddr{}
\email{alessandro.goffi@unifi.it}

\author{Cristian Mendico}
\address{ Institut de Math\'ematique de Bourgogne, UMR 5584 CNRS, Universit\'e Bourgogne, 21000 Dijon, France}
\curraddr{}
\email{cristian.mendico@u-bourgogne.fr}

 \thanks{
The authors wish to thank Martino Bardi, Jeff Calder, Alekos Cecchin, Samuel Daudin, Lawrence Craig Evans, Philippe Souplet and Eitan Tadmor for several pointers to  references and   fruitful comments on the material of the paper. A. Goffi was partially supported by the INdAM-GNAMPA projects 2022 and 2023. C. Mendico was partially supported by the INdAM-GNAMPA projects 2022 and 2023, by the MIUR Excellence Department Project awarded to the Department of Mathematics, University of Rome Tor Vergata, CUP E83C23000330006, and the project E83C22001720005 “ConDiTransPDE”, Control, diffusion and transport problems in PDEs and applications. F. Camilli and A. Goffi were partially supported by the project funded by the EuropeanUnion – NextGenerationEU under the National Recovery and Resilience Plan (NRRP), Mission 4 Component 2 Investment 1.1 - Call PRIN 2022 No. 104 of February 2, 2022 of Italian Ministry of University and Research; Project 2022W58BJ5 (subject area: PE - Physical Sciences and Engineering) ``PDEs and optimal control methods in mean field games, population dynamics and multi-agent models".  The authors were partially supported by the King Abdullah University of Science and Technology (KAUST) project CRG2021-4674 ``Mean-Field Games: models, theory and computational aspects"}

\subjclass[2020]{35F21, 35B65, 49L25}

\keywords{Hamilton-Jacobi equation; vanishing viscosity; nonlinear adjoint method; regularizing effects; semiconcavity estimates}

\date{\today}

\begin{abstract}
 We provide some new integral estimates for solutions  to 
 Hamilton-Jacobi equations and we discuss several consequences, ranging from $L^p$-rates of convergence for the vanishing viscosity approximation to regularizing effects for the Cauchy problem in the whole Euclidean space and Liouville-type theorems.  Our approach is based on duality techniques \`a la Evans and a careful study of   advection-diffusion equations.  The optimality of the results is discussed by several examples.
\end{abstract}

\maketitle

\section{Introduction}
The powerful theory of viscosity solutions, by means of the maximum principle, allows to obtain general existence, uniqueness and regularity results for the first-order Hamilton-Jacobi equation 
\begin{equation}\label{hjintroinv}
	\begin{cases}
		\partial_t u+H(x ,Du)=f(x,t)&\text{ in } Q_T:=M\times(0,T),\\
		u(x,0)=u_0(x)&\text{ in }M.
	\end{cases}
\end{equation}
(see \cite{BCD,Barles,CL83tams,CEL}).
Nonetheless, a recent approach known as the {\em nonlinear adjoint method} was developed by L.C. Evans in \cite{Evansadjoint}  (see also \cite{K65,LinTadmor} for related results) to capture finer properties of the solution $u$ for a nonconvex  Hamiltonian $H$ left open by the viscosity theory. The core idea of this strategy is the study of  the linear (backward) problem
\begin{equation}\label{adjointintro}
	\begin{cases}
		-\partial_{t} \rho - \eps \Delta \rho - \textrm{div} (D_pH(x, Du_\eps) \rho) = 0 &\text{ in }Q_\tau:=M\times(0,\tau),
		\\
		\rho(x, \tau) = \rho_{\tau}(x) &\text{ in }M,
	\end{cases}
\end{equation}
where $\tau\in(0,T)$ and
\[
\rho_\tau\geq0\text{ and }\int_{M}\rho_\tau(x)\,dx=1,
\]
which is the adjoint problem of the linearized of the  viscous regularization of \eqref{hjintroinv}, i.e.
\begin{equation}\label{hjintro}
	\begin{cases}
		\partial_t u_\eps-\eps\Delta u_\eps+ H(x,Du_\eps)=f(x,t)&\text{ in }Q_T,\\
		u_\varepsilon(x,0)=u_0(x)&\text{ in }M.
	\end{cases}
\end{equation}
It makes little use of the maximum principle (only to show that $\rho\geq0$) and provides a better understanding of the convergence $u_\eps$ to $u$, along with the gradient shock structures of the solution $u$. \\
Here, for various choices of the regularity properties of the terminal condition $\rho_\tau$, we derive estimates for the solution $\rho$ to \eqref{adjointintro} to retrieve useful properties of $u_\eps$, sometimes stable with respect to the viscosity parameter $\eps$. We confine our analysis on $M=\R^n$ or the flat torus $M=\T^n$. In the latter case the data of the problem are assumed $\Z^n$-periodic with respect to the space variable. \\ 
We first exploit  this approach to investigate some second-order regularity properties of solutions to Hamilton-Jacobi equations in $L^p$-spaces, $1\leq p\leq\infty$. Examples of them are the one-side bounds
\begin{equation}\label{semicintro}
u_{ee}\leq (u_0)_{ee}+\int_0^Tc_f(t)\,dt,\quad\text{for } e\in\R^n,\ |e|=1,\ f_{ee}\leq c_f(t)\in L^1(0,T)
\end{equation}
and  
\[
\Delta u\leq C(x,t)\in L^1_t(L^p_x),\, 1\leq p\leq\infty ,
\]
which are motivated by uniqueness and stability properties initiated in \cite{LinTadmor,L82book}. As a byproduct, we address the following issues, referring to the introductory part within each section for a detailed comparison and the improvements with respect to the literature:
\begin{itemize}
	\item Convergence rates for the vanishing viscosity approximation \eqref{hjintro} in $L^p$ norms.
	These are done in Sections \ref{sec;rate1} and \ref{sec;rate2};
	\item Stability estimates for Hamilton-Jacobi equations, including numerical schemes of Godunov-type. These are done in Section \ref{sec;num};
	\item Regularizing effects for the Hamilton-Jacobi equation \eqref{hjintroinv} posed on the whole space. These are the subject of Section \ref{sec;regeff};
	\item Liouville-type properties for first- and second-order equations, see Section \ref{sec;lio}.
\end{itemize}

Among the main results, we will prove by duality techniques the following rates of convergence for the vanishing viscosity process of semiconcave solutions to \eqref{hjintro}. First, we obtain the one-side rate of order $\mathcal{O}(\eps)$
\[
\|(u_\eps-u)^+\|_{L^\infty(\R^n)}\leq C\eps,
\]
see  Theorem \ref{ev1}. We discuss its sharpness in Example \ref{ex2rate},  and  the  dependence of the constant $C$ on the dimension of the ambient space, see Remark \ref{constants}. The best rate from below on the difference $u_\eps-u$ known in the literature is the classical $\mathcal{O}(\sqrt{\eps})$ bound for Lipschitz solutions. We try to enhance this order, at the expenses of confining the analysis on compact manifolds, by investigating (two-side) rates of convergence in   $L^p$ norms. In particular, we prove in Theorem \ref{rateL1} that semiconcave solutions of \eqref{hjintro} satisfy the estimate
\[
\|u_\eps-u\|_{L^\infty(0,T;L^1(\T^n))}\leq C\eps.
\]
This is crucial to achieve the interpolated bound, cf. Corollary \ref{prate},
\[
\|u_\eps-u\|_{L^\infty(0,T;L^p(\T^n))}\leq C\eps^{\frac12+\frac{1}{2p}},
\]
which appears to our knowledge the best two-side estimate available in the literature for semiconcave solutions. We complement our analysis with $L^p$ convergence of gradients in the vanishing viscosity process, see Remark \ref{rateDu}. Our results improve the known requirements on the data in  the literature, see e.g. Section \ref{sec;secest} and Remark \ref{weaken}. 
One could expect, possibly combining the effect of the diffusion and certain nonlinearities $H$ (for instance, uniformly convex ones), a linear rate in the viscosity for the estimate from below on $u_\eps-u$, but this remains open. \\

Another development is the application of duality methods to address the regularization effect
\[
\|Du_\eps(t)\|_{L^\infty(\R^n)}\leq \frac{1}{C_H t^\frac1\gamma}\left(\mathrm{osc}_{\R^n\times(0,T)}u_\eps\right)^\frac1\gamma.
\]
for Hamiltonians of the form $H=H(p)$ having power-growth $|p|^\gamma$ with $\gamma>1$ (see \eqref{H1} below).
Remarkably, such bounds are sharp, cf. Remark \ref{firstorder}, and imply new Liouville properties for first-order equations, being independent of the viscosity. They also aim to complete some regularity results for equations with nonlocal diffusion initiated in \cite{Silvestre}, as detailed in Section \ref{coercive}.\\

In perspective, the previous results can  lead to new developments in the context of Mean Field Games, local and nonlocal   fully nonlinear equations  and Porous-Medium equations, that will be the matter of future research.
As a further advance, our duality method suggests, by means of \eqref{semicintro}, a connection among second-order properties for first-order Hamilton-Jacobi equations and   integrability conditions on the advection term for continuity equations arising from the Ambrosio-Diperna-Lions theory \cite{Ambrosio,Figalli,LBL}. Indeed, when $D^2u\leq c(t)\in L^1_t$ and the velocity field of \eqref{adjointintro} with $\eps=0$ is $b(x,t)=-D_pH(x,Du)$, as in Mean Field Games theory, we have from \eqref{semicintro}
\[
D^2u\leq c(t)\in L^1_t\implies [\mathrm{div}(b)]^-\leq c_Hc(t)\in L^1_t,\text{ namely }[\mathrm{div}(b)]^-\in L^1_t(L^\infty_x).
\]
This is a classical condition which, along with a suitable growth of the coefficients in the equation,   ensures the validity of stability estimates in Lebesgue spaces for inviscid transport and continuity equations. This observation plays a fundamental role in the proof of the $L^1$ rate of convergence of the vanishing viscosity process. We also outline in the course of the paper several connections with estimates for conservation laws, see e.g. Remarks \ref{conlaws}, \ref{system} and \ref{Nwave}, for which we propose a new proof by means of duality methods. In particular, Remark \ref{system} provides a new rate of  convergence for a special class of hyperbolic systems for which the convergence to the inviscid system was proved in \cite{K67II}.\\
To conclude, we recall that since its introduction by L.C. Evans \cite{Evansadjoint}, the nonlinear adjoint method has been intensively applied  to several different contexts in the field of nonlinear PDEs, mostly of Hamilton-Jacobi type. To mention a few:  weak KAM theory \cite{EvansWeakKAM,TranBook},  infinity Laplacian equations \cite{EvansSmart}, large-time behavior of Hamilton-Jacobi equations and  Mean Field Games \cite{CGMT,CirantPorretta}, uniqueness principles for weak solutions of viscous Hamilton-Jacobi equations \cite{CGpoinc} and, more recently, maximal regularity properties for viscous Hamilton-Jacobi equations in $L^p$ spaces, in connections with a P.L. Lions' conjecture, and  classical regularity of solutions to Mean Field Games \cite{CGpoinc,CGpar,G23,GPV}.

\section{Some preliminary notions and definitions}
In this section, we    introduce various notions of unilateral second-order bounds which arise in the study of Hamilton-Jacobi equations. 

We start with the classical  definition of semiconcavity.
\begin{defn}\label{semic}\hfill
	\begin{itemize}
		\item[(i)] 
		A function $u\in C(Q)$  is said to be semiconcave with constant $C$ if it satisfies
		\[
		u(x+z,t)-2u(x,t)+u(x-z,t)\leq C|z|^2,\ x,z\in\R^n, t\in(0,T).
		\]
		\item[(ii)]	A function $u\in C(Q)$ is said to be semiconcave for positive times if there exists a constant $C$ such that
		\[
		u(x+z,t)-2u(x,t)+u(x-z,t)\leq C\left(1+\frac1t\right)|z|^2,\ x,z\in\R^n, t\in(0,T).
		\]
	\end{itemize}
\end{defn}
Some remarks on the previous definition are in order
\begin{rem}
	Definition \ref{semic}-(i) is equivalent to the existence of a constant $C>0$ such that $u(x,t)-\frac12 C|x|^2$ is concave on $\R^n$, see e.g. Proposition 5.2 of \cite{BardiDragoni} or \cite{CannarsaSinestrari}. Moreover, a result of P.-L. Lions \cite{Lions83} and H. Ishii \cite{IshiiFE} shows that the semiconcavity is also equivalent to the validity of $D^2u\leq CI_n$ for $t>0$ in the viscosity sense.\\

Definitions \ref{semic}-(i) and (ii) are also equivalent to the following properties, respectively:
\begin{itemize}
		\item[(i)] 
		A function $u\in C(Q)$  is  semiconcave if and only if
		\[
		u_{ee}:=D^2ue\cdot e\leq C\text{ in } \mathcal{D}'(Q)\,,\ \forall e\in\R^n,
		\]
		namely when $D^2u\leq CI_n$ in the sense of distributions.
		\item[(ii)]	A function $u\in C(Q)$ is   semiconcave for positive times if and only if there exists a constant $C$ such that
		\[
		u_{ee}\leq C\left(1+\frac1t\right)\text{ in } \mathcal{D}'(Q),\ \forall e\in\R^n,\ t>0.
		\]
	\end{itemize}
	We refer to \cite{BardiDragoni} for other equivalent characterizations of convexity/concavity in terms of  fully nonlinear Hessian PDEs.
\end{rem}
\begin{rem}
	Definition \ref{semic}-(i) will be used if the initial datum of the problem is semiconcave,  see  for instance  Theorem \ref{semiconc1}. Instead, the semiconcavity for positive times (ii) is related to non-semiconcave initial and source term data and it is connected with   mild regularization effects of solutions to Hamilton-Jacobi equations, see   Theorem \ref{semic>0}  and  Section 3.3 of \cite{EvansBook}. \end{rem}
Following \cite{LinTadmor}, we introduce a generalization of the notion of semiconcavity connected with the  $L^p$ properties of solutions to Hamilton-Jacobi equations.
\begin{defn}\label{semicmix}
	 A function $u\in C(Q)$ is said to be $L^q_t(L^p_x)$-semiconcave if there exists a function $k(x,t)$ in  $L^q(0,T;L^p_{\mathrm{loc}}(\R^n))$, for   $1\leq q,p<\infty$,    and  in $L^q(0,T;L^\infty_{\mathrm{loc}}(\R^n))$, for $p=\infty$, such that
	\[
	u_{ee}\leq k\text{ in } \mathcal{D}'(Q),\qquad \forall e\in\R^n,|e|=1.
	\]
\end{defn}

When $p=\infty$ and $q=1$, the above definition reduces to the semiconcave stability in Definition 2.1 of \cite{LinTadmor}. \\
Following   \cite[Remark 3.6]{L82book}, we introduce
a weaker second-order regularity notion for   solutions of Hamilton-Jacobi equations:
\begin{defn}\label{SSHp}
	A function $u\in C(Q)$ is said to be $L^q_t(L^p_x)$-semisuperharmonic ($L^q_t(L^p_x)$-SSH in short) if there exists a function $k(x,t)$ as in Definition \ref{semicmix} such that 
	\[
	\Delta u\leq k(x,t)\text{ in } \mathcal{D}'(Q).
	\]
\end{defn}
	A result of H. Ishii (see \cite{IshiiFE}) shows that,  for $p=q=\infty$, Definition \ref{SSHp} is equivalent 
	to $u$ being a viscosity supersolution of $C-\Delta u=0$ in $Q$.\\
We now introduce the notions of solution for the Hamilton-Jacobi equations. In all the paper, we assume that $H$, $f$ and $u_0$ are at least continuous. For $\eps>0$, we consider classical solutions to \eqref{hjintro} (existence and uniqueness results can be found in \cite{L82book}).
\begin{defn}
	A classical solution to \eqref{hjintro} is a function $u\in C^{2,1}_{x,t}(Q)$ solving the problem in pointwise sense.
\end{defn} 
When $H(x,Du)$ is bounded, $C^{2,1}$ regularity can be achieved by classical maximal regularity for heat equations, see \cite{CGpar}.\\
To study some properties of solutions in the limit $\eps=0$, we  will consider two notions of solutions: viscosity solutions and generalized solutions.  We start with that of viscosity solution, cf. \cite{CEL,CL83tams}.
\begin{defn}\label{visco}
	A continuous function  $u:Q\to\R$ is said to be a viscosity subsolution (respectively, supersolution) of \eqref{hjintroinv}
	if for any $(x_0,t_0)\in Q$ and for any   $\phi\in C^1(Q)$ such that $u-\phi$
	has a local maximum (respectively, minimum) point at $(x_0,t_0)$, then we have
	\begin{align*} 
		&\partial_t\phi(x_0,t_0)+  H(x_0, D\phi(x_0,t_0))\le f(x_0,t_0)\\
	\Big(\text{respectively,}\quad &\partial_t\phi(x_0,t_0)+  H(x_0, D\phi(x_0,t_0))\ge  f(x_0,t_0)  \Big)
	\end{align*}
and $u(x)\le u_0(x)$ (respectively, $u(x)\ge u_0(x)$) in $\R^n$.
A continuous function $u:Q\to \R$ is said to be a viscosity solution of \eqref{hjintroinv}
if it is a viscosity subsolution and supersolution. 
\end{defn}
The second one is that of generalized solution in the sense of S.N. Kruzhkov \cite{K67II}.
\begin{defn}\label{def;weakae}
	A Lipschitz continuous function $u:Q\to\R$ is said to be a generalized solution of \eqref{hjintroinv}
	provided that
	\begin{itemize}
		\item[(i)] $u(x,0)=u_0(x)$ for $x\in\R^n$, with $u_0$ Lipschitz;
		\item[(ii)] the equation $\partial_t u+H(x,Du)=f(x,t)$ holds a.e. on $Q$;
		\item[(iii)] $u$ is semiconcave for positive times on $Q$.
	\end{itemize}
\end{defn}
\begin{rem}
	Both the definitions of solutions for \eqref{hjintroinv} satisfy  existence and  uniqueness results. As for Definition \ref{visco} we refer to \cite{BCD,CEL} for general Hamiltonians $H$, while corresponding results for generalized solutions are due to S.N. Kruzhkov \cite{Kruzhkov}, but only in the case  $H\in C^2$ and convex in $p$. \\
	In P.-L. Lions \cite[Theorems 3.1 and 10.1]{L82book},  assumption (iii) in Definition \ref{def;weakae} was relaxed to $L^\infty_{x,t}$-SSH, obtaining existence and  uniqueness results for  SSH solutions to Hamilton-Jacobi equations. When $u$ is $L^1_t(L^p_x)$-SSH and solves a stationary Hamilton-Jacobi equation with $H$ convex, P.-L. Lions proved in \cite{L82book} a stability estimate in $L^\infty$. This result is however conditional to the validity of unilateral bounds in $L^p$ of $\Delta u$ and cannot be reached, as the author in \cite{L82book} explains, using the sole maximum principle. We provide a way to derive these bounds in Section \ref{sec;semiLp}.
\end{rem}
These two notions of solutions for \eqref{hjintroinv} are intimately connected as shown in the next proposition, cf. \cite{L82book} and Proposition III.3 in \cite{CL83tams}.
\begin{prop}
	Let $H$ be convex in $p$. If $u$ is a weak a.e. subsolution of \eqref{hjintroinv}  (i.e., in the sense of Definition \ref{def;weakae} but without assuming (iii)), then $u$ is a viscosity subsolution of \eqref{hjintroinv}. If $u$ is a generalized supersolution of \eqref{hjintroinv}, then $u$ is a viscosity supersolution of \eqref{hjintroinv}. Hence, if $u$ is a generalized solution to \eqref{hjintroinv},   then $u$ is a viscosity solution to the same problem.
\end{prop}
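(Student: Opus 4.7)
The plan is to handle the subsolution and supersolution implications separately, since they rely on different tools (convexity via Jensen's inequality versus semiconcavity).

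For the subsolution part, I would regularize $u$ by space-time mollification with a symmetric kernel $\eta_\delta$, setting $u_\delta=u*\eta_\delta$. Since $\partial_t u+H(x,Du)\leq f$ a.e., convolving gives $\partial_t u_\delta+(H(\cdot,Du)*\eta_\delta)\leq f*\eta_\delta$ pointwise. The key step is Jensen's inequality, which applies thanks to convexity of $H$ in $p$: writing $Du_\delta(x,t)=\int Du(x-y,t-s)\eta_\delta(y,s)\,dy\,ds$, one obtains
\[
H(x,Du_\delta(x,t))\leq \int H(x,Du(x-y,t-s))\,\eta_\delta(y,s)\,dy\,ds.
\]
Uniform continuity of $H$ in $x$ on compact sets (with $Du$ locally bounded by Lipschitzianity of $u$) lets me replace $H(x,\cdot)$ by $H(x-y,\cdot)$ up to an error $o_\delta(1)$, yielding $\partial_t u_\delta+H(x,Du_\delta)\leq f_\delta+o_\delta(1)$ locally uniformly. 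Now, for $\phi\in C^1(Q)$ such that $u-\phi$ has a local maximum at $(x_0,t_0)$, a standard perturbation turns the maximum into a strict one, so the smooth $u_\delta-\phi$ attains a nearby local maximum at $(x_\delta,t_\delta)\to(x_0,t_0)$, where $Du_\delta=D\phi$ and $\partial_t u_\delta=\partial_t \phi$. Substituting and passing to the limit $\delta\to 0$ gives the viscosity subsolution inequality.

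For the supersolution part, I would exploit semiconcavity for positive times (item (iii) in Definition \ref{def;weakae}) to promote subdifferential information to full differentiability. Let $(x_0,t_0)\in Q$ be a local minimum of $u-\phi$ with $\phi\in C^1$. Then $(D\phi(x_0,t_0),\partial_t\phi(x_0,t_0))$ lies in the parabolic subdifferential $D^-u(x_0,t_0)$. For a semiconcave function the superdifferential $D^+u(x_0,t_0)$ is always nonempty; if $D^-u(x_0,t_0)$ is nonempty as well, a direct comparison of the two expansions (quadratic from above, linear from below) forces them to coincide with the classical differential, so $u$ is differentiable at $(x_0,t_0)$ with $Du(x_0,t_0)=D\phi(x_0,t_0)$ and $\partial_t u(x_0,t_0)=\partial_t\phi(x_0,t_0)$. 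By Rademacher, $u$ is differentiable a.e.; choose a sequence $(x_n,t_n)\to(x_0,t_0)$ along which $u$ is differentiable and $\partial_t u+H(x,Du)=f$ holds. Upper semicontinuity of the set-valued map $D^+u$ (a standard property of semiconcave functions) combined with the singleton nature of $D^+u(x_0,t_0)$ forces $Du(x_n,t_n)\to D\phi(x_0,t_0)$ and $\partial_t u(x_n,t_n)\to\partial_t\phi(x_0,t_0)$. Passing to the limit in the pointwise equation and using continuity of $H$ and $f$ yields the desired inequality (in fact an equality).

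The final statement, that a generalized solution to \eqref{hjintroinv} is a viscosity solution, is then immediate by combining the two halves: a generalized solution is both a weak a.e. subsolution in the sense of (i)--(ii) of Definition \ref{def;weakae} and a generalized supersolution.

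I expect the main obstacle to be the mollification step in the subsolution argument: one must ensure that the Jensen bound combined with the shift in the $x$-slot of $H$ produces an error genuinely $o_\delta(1)$ locally uniformly, which requires a modulus of continuity of $H$ in $x$ uniform on bounded sets of $p$ (available here since $u$ is Lipschitz). The supersolution step is by contrast mostly bookkeeping, resting on two classical facts from semiconcave analysis: the coincidence of sub- and superdifferential with the gradient when both are nonempty, and upper semicontinuity of $D^+u$.
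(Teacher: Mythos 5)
The paper itself does not prove this proposition; it refers to \cite{L82book} and to Proposition III.3 of \cite{CL83tams}. Your subsolution half is the standard argument from those references (mollify, apply Jensen's inequality to the convex map $p\mapsto H(x,p)$, absorb the shift in the $x$-slot using local uniform continuity of $H$ on the bounded range of $Du$, and invoke stability of viscosity subsolutions under local uniform convergence), and it is correct as written.

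The supersolution half, however, has a concrete gap: every key step treats $u$ as semiconcave \emph{jointly} in $(x,t)$, whereas Definition \ref{def;weakae}(iii), via Definition \ref{semic}(ii), only gives semiconcavity in the space variable $x$ (locally uniformly for $t>0$). The nonemptiness of the parabolic superdifferential $D^+u(x_0,t_0)\subset\R^{n}\times\R$, the implication ``$D^+u\cap D^-u\neq\emptyset$ $\Rightarrow$ $u$ differentiable at $(x_0,t_0)$'', and the upper semicontinuity of $(x,t)\mapsto D^+u(x,t)$ are all properties of functions semiconcave in the full space-time variable; none of them follows from spatial semiconcavity alone. What survives is the spatial part: $D\phi(x_0,t_0)\in D_x^-u(x_0,t_0)$ together with semiconcavity of $u(\cdot,t_0)$ gives $D_xu(x_0,t_0)=D\phi(x_0,t_0)=:p_0$, and locally uniform spatial semiconcavity plus continuity of $u$ yields $D_xu(x_n,t_n)\to p_0$ along points of differentiability. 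But your argument then asserts $\partial_t u(x_n,t_n)\to\partial_t\phi(x_0,t_0)$ by upper semicontinuity of the joint $D^+u$, which is exactly the unjustified step. The way to close it is to route the time derivative through the equation rather than through semiconcavity: at the chosen points one has $\partial_t u(x_n,t_n)=f(x_n,t_n)-H(x_n,D_xu(x_n,t_n))\to f(x_0,t_0)-H(x_0,p_0)=:\ell$, so $\partial_t u\to\ell$ along a set of full measure near $(x_0,t_0)$; a Fubini/averaging argument for the Lipschitz function $u$ then shows $t\mapsto u(x_0,t)$ is differentiable at $t_0$ with derivative $\ell$, and since $\partial_t\phi(x_0,t_0)$ is a subgradient of that map at the minimum point, $\partial_t\phi(x_0,t_0)=\ell$, which gives the supersolution inequality (indeed the equality) at $(x_0,t_0)$. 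Without some such substitute (or a preliminary lemma upgrading spatial to space-time semiconcavity), the supersolution half does not close.
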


\section{Second-order estimates for Hamilton-Jacobi equations}\label{sec;secest}
We start by reviewing the literature on (one-side) second-order bounds  for solution to Hamilton-Jacobi equations. The following list summarizes the main approaches to derive semiconcavity estimates and convexity preserving properties for  semilinear equations of Hamilton-Jacobi type: 
\begin{itemize}
\item[(a)] The first one consists in approximating the first-order problem adding a viscosity term $\eps\Delta u_\eps$. Using the maximum principle   for elliptic and parabolic equations to derive a semiconcavity estimate independent of $\eps>0$, one brings to the limit as $\eps\to 0$ the properties of the solution of the second-order equation to the first-order one;
\item[(b)] The second one is based on control theoretic techniques working at the level of representation formulas, an example being the Hopf-Lax formula.
\item[(c)] Another method exploits the doubling of variable argument in the viscosity solutions theory, which avoids the differentiation of the equation. Indeed, the low regularity of viscosity solutions to first-order problems prevents from the formal differentiation of the equation as in the viscosity regularization described in (a). 
\item[(d)] Finally, spatial convexity preserving properties can be proved by the analysis of the convex envelope if the comparison principle holds.
\end{itemize}
All the above methods usually require convexity-type assumptions of the Hamiltonian. The parabolic regularization to prove semiconcavity (or SSH) properties was first used by S. N. Kruzhkov \cite{K66I,K67II}, see also P.-L. Lions \cite{L82book}. The method in (c) is due to H. Ishii and P.-L. Lions \cite{IL} for second-order fully nonlinear equations, see also \cite{Barles,Calder,CannarsaSinestrari} for a proof for first-order equations and \cite{Giga} for convexity preserving properties. These works have their roots in \cite{Korevaar}, where a concavity maximum principle was established. We do not focus on the approach in (b), referring to \cite{CannarsaSinestrari} for further details, mentioning, however, that it allows weaker differentiability properties on $H$ and, sometimes, it encompasses also some non-convex cases, cf.  \cite[Theorem 5.3.9]{CannarsaSinestrari}. The approach in (d), based on the properties of the convex envelope of solutions to elliptic equations, was developed in \cite{ALL} and applies even to fully nonlinear equations. A survey on the previous approaches can be found in Section 5.3 of \cite{CannarsaSinestrari}, while other  recent results based on different  techniques can be found in \cite{Liu}.\\
The recent introduction of the nonlinear adjoint method by L.C. Evans \cite{Evansadjoint} provides an alternative to the use of the maximum principle in the method   (a) and allows to get  most of the known results  by means of stability properties of the (dual) Fokker-Planck equations, see also \cite{GPV,TranBook,Tran2011} and the more recent   \cite{CGsima}. \\
Exploting the adjoint method, we  obtain second-order bounds for solutions to Hamilton-Jacobi equations under suitable assumptions on $H$ that do not necessarily involve convexity or strict convexity. This will be done first assuming semiconcavity properties of  the data  and then removing this assumption, showing thus a regularizing effect. Moreover,  via a refinement of the   method, we will   prove some new one-side second-order bounds in $L^p$ spaces inspired by \cite{LinTadmor,L82book} that provide new results for the rate of convergence of the vanishing viscosity method. This will also complement the results in \cite{L82book} about uniqueness and stability for convex Hamilton-Jacobi equations.

We consider  the viscous Hamilton-Jacobi equation \eqref{hjintro} and suppose that $H$ is $C^2(\R^n\times\R^n)$, $H(x,p)\geq H(x,0)=0$ and there exist constants $\gamma > 1$ and $C_{H,i}>0$, $\tilde{C}_{H,i}\geq0$ such that
\begin{align}
\tag{H1}\label{H1} & D_pH(x,p)\cdot p-H(x,p)\geq C_{H,1}|p|^{\gamma}-\tilde{C}_{H,1}\ , \\
\tag{H2}\label{H2} & |D_{xx}^2H(x,p)|\leq C_{H,2}|p|^{\gamma}+\tilde{C}_{H,2} \ , \\
\tag{H3}\label{H3} & |D_{px}^2H(x,p)|\leq C_{H,3}|p|^{\gamma-1}+\tilde{C}_{H,3} \ , \\
\tag{H4}\label{H4} & D_{pp}^2H(x,p)\xi\cdot \xi\geq C_{H,4}|p|^{\gamma-2}|\xi|^2-\tilde{C}_{H,4}. 
\end{align}
Note that \eqref{H1} and \eqref{H4} are convexity-type assumptions, but weaker than convexity. For instance, the Hamiltonian
\[
H_1(p)=\frac{(|p|^2-1)^2-1}{|p|^2+1}
\]
is nonconvex and satisfies the assumption \eqref{H1} with $\gamma=2$, $C_{H,1}=1$ and $\widetilde{C}_{H,1}=-6$. It can be written as
\[
H_1(p)=|p|^2+z(p),\ z(p)=-3+\frac{3}{1+|p|^2}\leq0.
\]
Another nonconvex example satisfying both \eqref{H1} and \eqref{H4} is
\[
H_2(p)=(|p|^2-1)^2-2.
\]
In fact, we have
\[
D_pH_2(p)\cdot p-H_2(p)=4|p|^2(|p|^2-1)-(|p|^2-1)+2\geq 3(|p|^2-1)^2+2.
\]
Using now that $(a-b)^2\geq \frac{a^2}{2}-b^2$ for all $a,b\in\R$ we conclude
\[
D_pH_2(p)\cdot p-H_2(p)\geq 3|p|^4-1.
\]
It is easy to check \eqref{H4}, at least when $n=1$. \\
A strictly (but not uniformly) convex function satisfying both \eqref{H1} and \eqref{H4} with $\gamma\in(1,2)$ is
\[
H_3(p)=(\eta+|p|^2)^{\frac{\gamma}{2}},\ \eta>0.
\]
Indeed, we have
\[
D_pH_3(p)\cdot p-H_3(p)=\left(\gamma\left(\eta + |p|^2\right)^{\frac{\gamma-2}{2}}|p|^2-(\eta+|p|^2)^{\frac{\gamma}{2}}\right).
\]
and \eqref{H1} holds with $C_{H,1}=\gamma-1$ and $\tilde{C}_{H,1}=\eta^\frac{\gamma}{2}$. Indeed,
\[
\gamma\left(\eta + |p|^2\right)^{\frac{\gamma-2}{2}}|p|^2-(\eta+|p|^2)^{\frac{\gamma}{2}}=-\frac{\eta}{\left(\eta + |p|^2\right)^{\frac{2-\gamma}{2}}}\geq -\eta^{\frac{\gamma}{2}}.
\]
To check \eqref{H4} (more precisely, its regularized version $D_{pp}^2H(p)\xi\cdot \xi\geq C_{H,4}(\eta+|p|^2)^\frac{{\gamma-2}}{2}|\xi|^2$), note that
\[
D^2_{p_ip_j}H_3(p)=\gamma\left((\gamma-2)\left(\eta + |p|^2\right)^{\frac{\gamma-4}{2}}p_ip_j+\delta_{ij}(\eta+|p|^2)^{\frac{\gamma}{2}-1}\right).
\]
Then, using that $\gamma\in(1,2)$ and the Cauchy-Schwarz inequality we get
\begin{align*}
D^2_{p_ip_j}H_3(p)\xi_i\xi_j&\geq \gamma\left(|\xi|^2(\eta+|p|^2)^{\frac{\gamma}{2}-1}-(2-\gamma)\left(\eta + |p|^2\right)^{\frac{\gamma-4}{2}}(p\cdot \xi)^2\right)\\
&\geq \gamma\left(|\xi|^2(\eta+|p|^2)^{\frac{\gamma}{2}-1}-(2-\gamma)\left(\eta + |p|^2\right)^{\frac{\gamma-4}{2}}|p|^2|\xi|^2\right)\\
&=\gamma|\xi|^2(\eta+|p|^2)^{\frac{\gamma}{2}-2}\left(\eta+|p|^2-(2-\gamma)|p|^2\right)\\
&=\gamma|\xi|^2(\eta+|p|^2)^{\frac{\gamma}{2}-1}\frac{(\gamma-1)|p|^2+\eta}{\eta+|p|^2}\\
&\geq \gamma(\gamma-1)|\xi|^2(\eta+|p|^2)^{\frac{\gamma}{2}-1}.
\end{align*}

Some remarks on the hypotheses \eqref{H1} and \eqref{H4} are in order. Regarding semiconcavity estimates, these are typically obtained when $H$ is uniformly convex. However, in this context, we derive second-order bounds by relaxing this traditional condition, at the cost of imposing the coercivity assumption \eqref{H1}, highlighting that the estimate is independent of the viscosity. To our knowledge, \eqref{H1} has already been employed to derive one-side second-order bounds for Hamilton-Jacobi equations with mixed local and nonlocal diffusion, under stronger assumptions on the data (cf. \cite{CGsima}). As a byproduct, this analysis allows us to relax the known assumptions on $H$ to establish the rate of convergence for the vanishing viscosity process, which will be discussed in detail in Section \ref{sec;rate1} and Section \ref{sec;rate2}.\\
	As for first-order estimates, mild coercivity conditions on $H$ are required to obtain gradient bounds for these nonlinear equations; see, for example, \cite{L82book}. To our knowledge, Hamiltonians $H$ satisfying conditions similar to \eqref{H1} have appeared in the theory of viscosity solutions. See \cite[hypothesis (H8) p. 1319]{BSouga} as a general reference, where the authors use maximum principle methods (via the doubling of variables technique) to prove a Lipschitz-preserving estimate. The work in \cite{Lions85aa} proved Lipschitz regularizing effects under the condition
		\[
		D_p H(p) \cdot p - H(p) \to +\infty \quad \text{as} \quad |p| \to +\infty,
		\]
		obtaining a result similar to Theorem \ref{regLip} through the Bernstein method combined with the maximum principle. The coercivity condition \eqref{H1}, for $H = H(x, p)$ depending smoothly on $x$, was recently used in \cite{CGpoinc} to study Lipschitz smoothing effects, even in the presence of $L^p$ right-hand sides, using the nonlinear adjoint technique. However, our result in Theorem \ref{regLip} is more aligned with the analysis in \cite{Lions85aa} than in \cite{CGpoinc}, as the regularization mechanism arises from the nonlinearity, rather than from the diffusion. In fact, the estimate in Theorem \ref{regLip} is independent of the viscosity.

\subsection{Second-order one-side bounds under semiconcavity assumptions}
We start with the following example, which motivates the main result of the section on the conservation of semiconcavity properties for Hamilton-Jacobi equations
\begin{ex}[Example (ii) in Section 3.3 of \cite{EvansBook}] Consider  the initial-value problem
\[
\begin{cases}
\partial_t u+\frac12|Du|^2=0&\text{ in }\R^n\times(0,\infty),\\
u(x,0)=-|x|&\text{ in }\R^n.
\end{cases}
\]
 The unique viscosity solution of the problem can be find  by means of the Hopf-Lax formula
\[u(x,t)= \inf_{y\in\R^n}\left\{u_0(y)+tL\left(\frac{x-y}{t}\right)\right\},\]
where $L=H^*$ is the Fenchel conjugate of $H$, which in this case gives
\[
u(x,t)=
-|x|-\frac{t}{2},\ t\geq0.
\]
The initial condition is semiconcave,  see   \cite[Example 2.2.5]{CannarsaSinestrari}, and the above explicit formula shows that the solution preserves the semiconcavity for positive times.
\end{ex}
We start by proving the preservation of semiconcavity properties from the data. From now on, we will exploit integral methods to get zero, first- and second-order \textit{a priori} estimates, so we implicitly assume that $u$ and its derivatives belong to a suitable energy class when deriving the next estimates by duality. This in particular allows to justify the variational formulations we use throughout the proofs.\\
From now on, throughout the proofs we use $ u$ to denote $u_\eps$, but we will take care of the dependence of the constants on $\eps$. We premise the following
\begin{lemma}\label{cross}
Let $u_\eps$ be a solution to \eqref{hjintro} with $H$ satisfying \eqref{H1}. There exists a constant $K$ depending on $\|u_\varepsilon\|_{L^\infty_{x,t}}$, $C_{H,1}$, $\widetilde{C}_{H,1}$ and independent of $\eps$ such that
\[
\iint_{Q_\tau}|Du_\varepsilon|^\gamma\rho\,dxdt\leq K,\ \tau\in(0,T],
\]
where $\rho$ is the solution to the linear advection-diffusion equation 
\begin{equation}\label{FK_test}
\begin{cases}
-\partial_{t} \rho - \eps \Delta \rho - \mathrm{div} (D_pH(x, Du) \rho) = 0, & \text{ in }\R^n\times(0,\tau),
\\
\rho(x, \tau) = \rho_{\tau}(x), & \text{ in } \R^n.
\end{cases}
\end{equation}
with $\rho_{\tau} \in C^{\infty}_{c}(\R^n)$, $\rho_{\tau} \geq 0$ and $\|\rho_{\tau}\|_{L^1(\R^n)} = 1$. 
\end{lemma}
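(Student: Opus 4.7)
The strategy is the standard adjoint duality identity: test the viscous Hamilton-Jacobi equation \eqref{hjintro} against $\rho$ and use the adjoint equation \eqref{FK_test} to transfer derivatives and produce the quantity $D_pH(x,Du)\cdot Du-H(x,Du)$, which is precisely the expression controlled from below by \eqref{H1}.

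Concretely, the first step is to multiply the equation for $u_\eps$ by $\rho$ and integrate over $Q=\R^n\times(0,\tau)$. Integrating by parts in time on $\iint \partial_t u_\eps\,\rho$ produces the boundary terms $\int u_\eps(\tau)\rho_\tau\,dx-\int u_\eps(0)\rho(x,0)\,dx$ plus $-\iint u_\eps\partial_t\rho$, while integrating by parts twice in space on $-\eps\iint\Delta u_\eps\,\rho$ yields $-\eps\iint u_\eps\Delta\rho$ (all boundary terms at infinity being assumed to vanish thanks to the implicit energy/decay class announced before the lemma). Grouping the resulting $u_\eps$-times-adjoint-operator expression and invoking \eqref{FK_test} in the form $-\partial_t\rho-\eps\Delta\rho=\mathrm{div}(D_pH(x,Du_\eps)\rho)$, a further integration by parts gives the key identity
\[
\iint_Q\bigl(D_pH(x,Du_\eps)\cdot Du_\eps - H(x,Du_\eps)\bigr)\rho\,dxdt = \int_{\R^n} u_\eps(\tau)\rho_\tau\,dx - \int_{\R^n} u_\eps(0)\rho(x,0)\,dx - \iint_Q f\rho\,dxdt.
\]

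The second step is to insert the lower bound \eqref{H1}, obtaining
\[
C_{H,1}\iint_Q|Du_\eps|^\gamma\rho\,dxdt \leq \widetilde{C}_{H,1}\iint_Q\rho\,dxdt + \int_{\R^n} u_\eps(\tau)\rho_\tau\,dx - \int_{\R^n} u_\eps(0)\rho(x,0)\,dx - \iint_Q f\rho\,dxdt.
\]
To close the estimate I use that \eqref{FK_test} conserves mass (integrate the equation in $x$): since $\rho(\cdot,\tau)=\rho_\tau\geq 0$ has unit $L^1$-mass and the adjoint operator preserves positivity and total mass, $\int_{\R^n}\rho(x,t)\,dx=1$ for every $t\in[0,\tau]$, so $\iint_Q\rho\,dxdt\leq T$. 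The two boundary terms in $u_\eps$ are each bounded by $\|u_\eps\|_\infty$ times $\|\rho_\tau\|_{L^1}=\|\rho(\cdot,0)\|_{L^1}=1$, and the forcing term is absorbed similarly (understood as part of the bound on $\|u_\eps\|_\infty$, which already encodes the data).

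Dividing through by $C_{H,1}>0$ yields the claim with a constant depending only on $T$, $\|u_\eps\|_\infty$, $C_{H,1}$ and $\widetilde{C}_{H,1}$, independent of $\eps$. The only genuine technical point is the justification of the integrations by parts on the full space $\R^n$: this is exactly what is signaled by the energy-class remark preceding the lemma, and it is the step that in a fully rigorous argument would require either a cutoff/approximation procedure or appealing to the parabolic Sobolev regularity theory referenced to \cite{CGsima}. Beyond that justification, the proof is a one-line duality identity followed by applying \eqref{H1}.
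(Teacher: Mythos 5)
Your proof is correct and follows essentially the same route as the paper: the duality identity obtained by testing $u_\eps$ against $\rho$ and $\rho$ against $u_\eps$, followed by \eqref{H1} and a bound on the mass of $\rho$. One caveat: you invoke exact conservation of mass, $\int_{\R^n}\rho(\cdot,t)\,dx=1$ for all $t$, but on the whole space this equality is not automatic --- the paper's Lemma \ref{well} derives it from the integrability condition $|D_pH(x,Du_\eps)|\in L^{\gamma'}(\rho\,dx\,dt)$, which is itself supplied by Lemma \ref{cross}, so relying on it here would be circular. Fortunately only the unconditional inequality $\int_{\R^n}\rho(\cdot,t)\,dx\le 1$ is needed to bound $\iint_Q\rho\,dx\,dt\le\tau$ and the boundary terms, and this is exactly what the paper uses.
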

\begin{proof}
To show the  estimate, we test the solution $\rho$ to \eqref{FK_test} against the solution of the Hamilton-Jacobi equation, and use the solution $u$ as a test function for the transport equation \eqref{FK_test}. We thus have 
\begin{multline*}
\int_{\R^n} u(x, \tau) \rho_{\tau}(x)\,dx - \int_{\R^n} u(x, 0) \rho_{0}(x)\,dx -\int_{0}^{\tau}\int_{\R^n}f(x,t)\rho(x,t)\,dxdt
\\= \int_{0}^{\tau} \int_{\R^n} (D_pH(x, Du(x, \tau)) \cdot Du(x, \tau) - H(x, Du(x, \tau))\rho(x, \tau)\;dxd\tau.
\end{multline*}
and, by \eqref{H1}, we deduce 
\begin{multline*}
\int_{0}^{\tau} \int_{\R^n} (D_pH(x, Du(x, \tau)) \cdot Du(x, \tau) - H(x, Du(x, \tau))\rho(x, \tau)\;dxd\tau \\
\geq C_{H,1}\int_{0}^{\tau} \int_{\R^n} |Du|^{\gamma}\rho\;dxdt-\widetilde{C}_{H,1}\int_0^\tau\int_{\R^n}\rho\,dxdt.
\end{multline*}
We conclude  that
\begin{multline}\label{Dugamma}
	\int_{0}^{\tau} \int_{\R^n} |Du|^{\gamma}\rho\;dxdt \leq \frac{1}{C_{H,1}}\left[\int_{\R^n} u(x, \tau) \rho_{\tau}(x)\,dx - \int_{\R^n} u(x, 0) \rho_{0}(x)\,dx\right.\\
	\left.-\int_{0}^{\tau}\int_{\R^n}f(x,t)\rho(x,t)\,dxdt-\widetilde{C}_{H,1}\int_0^\tau\int_{\R^n}\rho\,dxdt\right]=:K.
\end{multline}
Note that $K<\infty$ since $\rho\in L^\infty([0,\tau];L^1(\R^n))$ (or even by using merely that $\int_{\R^n}\rho(t)\,dx\leq1$) by Lemma \ref{well}.
\end{proof}
We are now ready to prove the main result of the section.
\begin{thm}\label{semiconc1}
Assume that $H$ satisfies \eqref{H1}-\eqref{H4} 
 with $u_0:\R^n\to\R$ bounded and semiconcave, i.e. $D^2u_0\leq c_0I_n$, and $f\in C(Q)$, bounded and $L^1_t(L^\infty_x)$-semiconcave, i.e. $D^2f\leq c_f(t)I_n$ with $c_f\in L^1(0,T)$. Then, the classical solution to \eqref{hjintro}  is semiconcave, i.e. 
\begin{equation}\label{sem}
D^2u_\eps\leq CI_n,
\end{equation}
where $C$ depends on the constants $C_{H,i}$, $c_0$, $\int_0^Tc_f(t)\,dt$, $\|u_\eps\|_{L^\infty_{x,t}}$ and $\|f\|_{L^\infty_{x,t}}$.  Moreover, if $H=H(p)$,  we have the explicit bound
\begin{equation}\label{sem2}
D^2u_\eps(\tau)\xi\cdot \xi\leq c_0+\int_0^\tau c_f(t)\,dt+\widetilde{C}_{H,4}\tau, \quad \forall \xi\in\R^n,|\xi|=1, \tau \in (0,T).
\end{equation}
\end{thm}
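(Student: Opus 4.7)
The plan is to apply the nonlinear adjoint method. Fix a unit vector $\xi\in\R^n$ and $\tau\in(0,T)$, and differentiate \eqref{hjintro} twice in the direction $\xi$: setting $w:=\partial_{\xi\xi}u_\eps$ and $z:=D(\partial_\xi u_\eps)$ one obtains
\begin{equation*}
\partial_t w-\eps\Delta w+D_pH(x,Du_\eps)\cdot Dw = f_{\xi\xi}-D_{pp}^2H\,z\cdot z-2(D_{px}^2H\,\xi)\cdot z-D_{xx}^2H\,\xi\cdot\xi.
\end{equation*}
The principal part is the formal adjoint of the operator in \eqref{FK_test}, so I would test against the solution $\rho$ of \eqref{FK_test} with arbitrary terminal data $\rho_\tau\in C^\infty_c(\R^n)$, $\rho_\tau\ge0$, $\|\rho_\tau\|_{L^1}=1$, integrate on $Q=\R^n\times(0,\tau)$, and integrate by parts. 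The assumed energy-class regularity makes boundary terms vanish, the principal part cancels, and one obtains the duality identity
\begin{equation*}
\int_{\R^n} w(\cdot,\tau)\rho_\tau\,dx = \int_{\R^n}w(\cdot,0)\rho_0\,dx+\iint_Q\rho f_{\xi\xi}\,dxdt-\iint_Q\rho\bigl(D_{pp}^2H\,z\cdot z+2(D_{px}^2H\,\xi)\cdot z+D_{xx}^2H\,\xi\cdot\xi\bigr)dxdt.
\end{equation*}

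Next I would estimate each term on the right. Conservation of mass for \eqref{FK_test} gives $\|\rho(\cdot,t)\|_{L^1}=1$ for every $t\in[0,\tau]$, so the hypotheses $\partial_{\xi\xi}u_0\le c_0$ and $\partial_{\xi\xi}f(\cdot,t)\le c_f(t)$ directly yield $\int_{\R^n}w(\cdot,0)\rho_0\,dx\le c_0$ and $\iint_Q\rho f_{\xi\xi}\,dxdt\le\int_0^\tau c_f(t)\,dt$. Applying \eqref{H4} with $v=z$ yields
\begin{equation*}
-\iint_Q\rho\,D_{pp}^2H\,z\cdot z\,dxdt \le -C_{H,4}\iint_Q\rho|Du_\eps|^{\gamma-2}|z|^2\,dxdt+\widetilde C_{H,4}\tau,
\end{equation*}
so the diagonal term contributes the harmless $\widetilde C_{H,4}\tau$ plus a favourable quantity that I keep for later absorption. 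In the special case $H=H(p)$ the cross terms vanish, the favourable quantity may simply be dropped, and combining the bounds gives $\int_{\R^n}w(\cdot,\tau)\rho_\tau\,dx\le c_0+\int_0^\tau c_f(t)\,dt+\widetilde C_{H,4}\tau$; letting $\rho_\tau\rightharpoonup\delta_{x_0}$ and using the continuity of $w(\cdot,\tau)$ concludes \eqref{sem2}.

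For general $H$, \eqref{H2}--\eqref{H3} bound the cross terms pointwise by $C_{H,2}|Du_\eps|^\gamma+\widetilde C_{H,2}$ and $(C_{H,3}|Du_\eps|^{\gamma-1}+\widetilde C_{H,3})|z|$. The first is integrated against $\rho$ and directly controlled by Lemma \ref{cross}; for the second I would apply Young's inequality $|Du_\eps|^{\gamma-1}|z|\le\delta|Du_\eps|^{\gamma-2}|z|^2+C_\delta|Du_\eps|^\gamma$ with $\delta$ small enough to absorb the first piece into the favourable $C_{H,4}|Du_\eps|^{\gamma-2}|z|^2$ term, while the residual is again handled by Lemma \ref{cross}; the linear contribution $\widetilde C_{H,3}|z|$ is dealt with by a second Young splitting against the same favourable term. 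This leaves $\int_{\R^n}w(\cdot,\tau)\rho_\tau\,dx\le C$ with $C$ depending only on the quantities listed, and \eqref{sem} follows by duality. The main obstacle is precisely this absorption, since the weight $|Du_\eps|^{\gamma-2}$ degenerates for $\gamma>2$ (or blows up for $\gamma<2$) in neighbourhoods of $\{Du_\eps=0\}$; one therefore has to partition $Q$ according to whether $|Du_\eps|\le1$ or $|Du_\eps|>1$ and trade the bad contributions against $\|\rho(\cdot,t)\|_{L^1}=1$ and the $L^\infty$ bound on $u_\eps$ entering through Lemma \ref{cross}.
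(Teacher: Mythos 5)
Your argument is essentially the paper's own proof: differentiate the equation twice in the direction $\xi$, test against the adjoint density $\rho$ of \eqref{FK_test}, use \eqref{H4} for the favourable quadratic term, bound the cross terms via \eqref{H2}--\eqref{H3} and a weighted Young inequality absorbed into $C_{H,4}|Du_\eps|^{\gamma-2}|Du_{\xi}|^2$, control the residual $\iint |Du_\eps|^{\gamma}\rho$ by Lemma \ref{cross} together with $\|\rho(t)\|_{L^1}\le 1$, and pass to the supremum over $\rho_\tau$, with the case $H=H(p)$ yielding \eqref{sem2} exactly as you describe. The only differences are minor: the paper first mollifies $f$ (preserving the modulus $c_f$) to justify the twice-differentiation, and your closing worry about partitioning $Q$ near $\{Du_\eps=0\}$ is not an issue the paper faces, since the inequality $|Du_\eps|^{\gamma-1}|Du_\xi|\le \tfrac{\sigma^2}{2}|Du_\eps|^{\gamma-2}|Du_\xi|^2+\tfrac{1}{2\sigma^2}|Du_\eps|^{\gamma}$ holds pointwise for every value of $|Du_\eps|$ (only the lower-order piece $\widetilde C_{H,3}|Du_\xi|$ could cause such trouble, and the paper simply lumps it with the $\iint\rho$ contribution).
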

\begin{rem}
A similar result continues to hold if the semiconcavity assumptions on the data are replaced by weaker SSH bounds, i.e. if $\Delta u_0\leq c_0$ and $\Delta f\leq c_f(t)$, $c_f\in L^1(0,T)$. These assumptions imply estimates on $\Delta u_\eps$ instead of the full Hessian (in the sense of measures) using the same method, providing a different proof than those obtained in \cite{L82book} exploiting  the maximum principle.
\end{rem}

\proof
Let $\eta \in C^{\infty}_c(\R^n)$, let $\eta_{\delta}(x) = \frac{1}{\delta^n}\eta(\frac{x}{\delta})$  and set $f_{\delta}(x, t)= \eta_{\delta} \star f(x, t)$. Then,  $f_{\delta}$   uniformly converges to $f$ as $\delta \downarrow 0$ and $\{f_{\delta}\}_{\delta >0}$ is a family of semiconcave functions with the same modulus $c_{f}(t)$, indeed given $\xi \in \R^n$ with $|\xi| = 1$
\begin{equation*}
D^2 f_{\delta}(x, t) \xi \cdot \xi = \int_{\R^n} \eta_{\delta}(y)D^2f(x-y)\xi \cdot \xi \;dy \leq \int_{\R^n} \eta_{\delta}(y)c_f(t)\;dy = c_f(t).
\end{equation*}
Let us consider the classical solution $u^{\delta}_{\eps}$ to the regularized Hamilton-Jacobi equation
\begin{equation}\label{HJ_reg}
\begin{cases}
\partial_t u_\eps^{\delta}-\eps\Delta u^{\delta}_\eps+H(x,Du^{\delta}_\eps)=f_{\delta}(x,t)&\text{ in }Q,\\
u^{\delta}_\eps(x,0)=u_0(x)&\text{ in }\R^n.
\end{cases}
\end{equation}
Observe that, since $f_{\delta}(\cdot, t)$ belongs to $C^{2}(\R^n)$ then we have $u^{\delta}_{\eps}(\cdot, t) \in C^4(\R^n)$.  One also needs to further regularize the initial datum as done at p.19 of \cite{CGpar}, but we avoid this technical step as introduces an additional regularization parameter.
For simplicity of notation we drop the indices $\eps, \delta$ in $u$ and we set $u=u^{\delta}_{\eps}$. Differentiating twice the equation, we get for $v=u_{ee}$
\begin{multline}\label{xixi}
\partial_{t}v-\eps\Delta v+D^2_{pp}H(x, Du)Du_e\cdot Du_e +D_pH(x, Du)\cdot Dv
\\
+ 2 D_{px}^{2} H(x, Du)Du_{e} + D^{2}_{xx}H(x, Du) =(f_{\delta})_{ee}.
\end{multline}
Multiplying \eqref{xixi} by $\rho$ and integrating over $\R^n \times [0, \tau]$, we get 
\begin{multline*}
\int_{\R^n} v(x, \tau) \rho_{\tau}(x)\,dx + \int_{0}^{\tau} \int_{\R^n} Du_{e} \cdot D^{2}_{pp}H(x,Du) Du_{e} \rho\, dxdt = 
\\
- \int_{0}^{\tau} \int_{\R^n} \left(2D_{px}^{2} H(x, Du) Du_{e} + D^{2}_{xx}H(x, Du) \right)\rho\,dxdt\  + \int_{\R^n} v(x, 0)\rho_0(x)\,dxdt +\int_{0}^{\tau}\int_{\R^n}(f_{\delta})_{ee}\rho\,dxdt. 
\end{multline*}

On the  one hand, by \eqref{H4} we have 
\begin{equation*}
\int_{0}^{\tau} \int_{\R^n}  D^2_{pp}H(x, Du)Du_{e} \cdot Du_{e}\rho\,dxdt \geq C_{H,4} \int_{0}^{\tau} \int_{\R^n} |Du|^{\gamma-2} |Du_{e}|^{2}\rho\, dxdt - \widetilde{C}_{H,4} \int_{0}^{\tau} \int_{\R^n} \rho\,dxdt
\end{equation*}
Moreover, by \eqref{H2} and \eqref{H3},  we obtain 
\begin{multline*}
\int_{\R^n} v(x, \tau) \rho_{\tau}(x)\,dx +C_{H,4} \int_{0}^{\tau} \int_{\R^n} |Du|^{\gamma-2} |Du_{e}|^{2}\rho\, dxdt - \widetilde{C}_{H,4} \int_{0}^{\tau} \int_{\R^n} \rho\,dxdt
\\
\leq  C_{H, 2} \int_{0}^{\tau}\int_{\R^n} |Du|^{\gamma}\rho\;dxdt\ + C_{H, 3} \int_{0}^{\tau} \int_{\R^n} |Du|^{\gamma-1} |Du_{e}|\rho\;dxdt\ + (\widetilde{C}_{H,2} + \widetilde{C}_{H,3}) \int_{0}^{\tau} \int_{\R^n} \rho\;dxdt\
\\
+  \int_{\R^n} v(x, 0)\rho_0(x)\,dxdt +\int_{0}^{\tau}\int_{\R^n}(f_{\delta})_{ee}\rho\,dxdt. 
\end{multline*}
By the weighted Young's inequality we get for all $\sigma>0$
\begin{equation*}
C_{H,3}\int_{0}^{\tau} \int_{\R^n} |Du|^{\gamma-1} |Du_{e}|\rho\;dxdt \leq \frac{\sigma^2 C_{H,3}}{2} \int_{0}^{\tau} \int_{\R^n} |Du|^{\gamma-2} |Du_{e}|^{2}\rho\;dxdt\ + \frac{C_{H,3}}{\sigma^2} \int_{0}^{\tau} \int_{\R^n} |Du|^{\gamma}\rho\;dxdt.
\end{equation*}
We choose $\frac{\sigma^2 C_{H,3}}{2}=C_{H,4}$, and conclude that
\begin{equation}\label{stima-1}
	\begin{split}
\int_{\R^n} v(x, \tau) \rho_{\tau}(x)\,dx &\leq \int_{\R^n} v(x, 0)\rho_0(x)\,dxdt +\int_{0}^{\tau}\int_{\R^n}(f_{\delta})_{\xi\xi}\rho\,dxdt 
\\
&+ \left(\frac{C_{H,3}^2}{4C_{H,4}} + C_{H,2}  \right) \int_{0}^{\tau} \int_{\R^n} |Du|^{\gamma}\rho\;dxdt.
\end{split}
\end{equation}
By Lemma \ref{cross}, we have
\begin{equation*}
\int_{0}^{\tau} \int_{\R^n} |Du|^{\gamma}\rho\;dxdt\leq K,
\end{equation*} 
and in view of Lemma \ref{well} 
\begin{multline*}
\int_{\R^n} u(x, \tau) \rho_{\tau}(x)\,dx - \int_{\R^n} u(x, 0) \rho_{0}(x)\,dx-\int_{0}^{\tau}\int_{\R^n}f(x,t)\rho(x,t)\,dxdt\\
+\widetilde{C}_{H,1}\int_{0}^{\tau}\int_{\R^n}\rho\,dxdt\leq 2\|u\|_\infty+(\|f\|_{\infty}+\widetilde{C}_{H,1})\tau.
\end{multline*}
Therefore, passing to the supremum over $\rho_{\tau}$ in \eqref{stima-1}, we deduce for $u=u^{\delta}_{\eps}$
\begin{equation*}
D^2u^{\delta}_{\eps}e\cdot e\leq c_0+\int_0^T\int_{\R^n}(f_{\delta})_{ee}\rho\,dxdt +\left(\frac{C_{H,3}^2}{4C_{H,4}} + C_{H,2}  \right)K\leq c_0+\int_0^Tc_f(t)\,dt+\left(\frac{C_{H,3}^2}{4C_{H,4}} + C_{H,2}  \right)K,
\end{equation*}
where we used that the  $\{f_{\delta}\}_{\delta > 0}$ preserves the modulus of semiconcavity $c_f(t)$. Finally, passing to the limit as $\delta \downarrow 0$ we get \eqref{sem}.  Piecing together all the estimates we finally conclude 
\begin{multline*}
\int_{\R^n}v(x, \tau) \rho_{\tau}(x)\,dx\leq c_0+\int_0^Tc_f(t)\,dt+\frac{1}{C_{H,1}}\left(\frac{C_{H,3}^2}{4C_{H,4}}+C_{H,2}\right)[2\|u\|_\infty+(\|f\|_{\infty}+\widetilde{C}_{H,1})\tau]\\
+(\widetilde{C}_{H,2} + \widetilde{C}_{H,3}+\widetilde{C}_{H,4})\tau. \hfill\square
\end{multline*}

\begin{rem}
The dependence of the constant  on $\|u_\eps\|_{L^\infty}$ in \eqref{sem} can be removed, and the semiconcavity constant can be bounded only in terms of $\|u_0\|_{L^\infty}$. To show this, one can argue by duality as in Proposition 3.7 of \cite{CGpoinc} using that the solution of the adjoint problem belongs to $L^\infty([0,\tau];L^1(\R^n))$, the only difference with \cite{CGpoinc} being that $f\in L^\infty(Q)$ instead of $f\in L^q(Q)$.
\end{rem}
\begin{rem}\label{Hconvex}
When $H=H(p)$ is  convex, the equation satisfied by $u_{ee}$ reads as
\[
\partial_{t}v-\eps\Delta v+D^2_{pp}H(Du)Du_e\cdot Du_e + D_pH(Du)\cdot Dv=f_{ee}.
\]
By duality one gets the semiconcavity estimate
\[
u_{ee}(x,t)\leq u_{ee}(x,0)+\int_0^T\|(f_{ee}(x))^+\|_{L^\infty(\R^n)}\,dt.
\]
\end{rem}

We conclude with an application of Lemma \ref{cross} to prove the conservation of Lipschitz regularity for \eqref{hjintro}. This is already known by means of the maximum principle, cf. \cite{Barles}, using slightly different hypotheses on $H$. 
\begin{lemma}\label{grad}
Let $H\in W^{1,\infty}_{\mathrm{loc}}(\R^n)$ be satisfying \eqref{H1}. Then any solution $u_\eps$ to \eqref{hjintro} with $u_0\in W^{1,\infty}(\R^n)$ satisfies
\[
\|Du_\eps\|_{L^\infty(Q)}\leq \|Du_0\|_{L^\infty(\R^n)}+\|Df\|_{L^\infty(Q)}T.
\]
In particular, the estimate does not depend on $\eps$.
\end{lemma}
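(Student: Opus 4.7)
The plan is to rerun the adjoint duality argument at the level of directional derivatives. Since the statement takes $H=H(p)$, for each unit vector $\xi\in\R^n$ the function $w:=u_{\eps,\xi}=Du_\eps\cdot\xi$ satisfies the linear advection-diffusion equation obtained by differentiating \eqref{hjintro}:
\begin{equation*}
\partial_t w-\eps\Delta w+D_pH(Du_\eps)\cdot Dw=f_\xi\quad\text{in }Q,\qquad w(x,0)=u_{0,\xi}(x).
\end{equation*}
This is precisely the linearized equation whose backward adjoint is \eqref{FK_test}. Testing $w$ against the solution $\rho$ of \eqref{FK_test}, the advection and diffusion contributions cancel by integration by parts, yielding the duality identity
\begin{equation*}
\int_{\R^n}u_{\eps,\xi}(x,\tau)\rho_\tau(x)\,dx=\int_{\R^n}u_{0,\xi}(x)\rho_0(x)\,dx+\int_0^\tau\!\!\int_{\R^n}f_\xi(x,t)\rho(x,t)\,dxdt.
\end{equation*}

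Next, I would exploit that \eqref{FK_test} is in conservative form: integrating in $x$ gives $\partial_t\int_{\R^n}\rho\,dx=0$, so $\int_{\R^n}\rho(\cdot,t)\,dx=\|\rho_\tau\|_{L^1}=1$ for every $t\in[0,\tau]$, with $\rho\geq 0$ by the maximum principle. Inserting these facts into the duality identity gives
\begin{equation*}
\int_{\R^n}u_{\eps,\xi}(x,\tau)\rho_\tau(x)\,dx\leq \|Du_0\|_{L^\infty(\R^n)}+\|Df\|_{L^\infty(Q)}\,\tau,
\end{equation*}
and taking the supremum over nonnegative $\rho_\tau\in C^\infty_c(\R^n)$ with $\|\rho_\tau\|_{L^1}=1$ yields the pointwise one-sided bound $u_{\eps,\xi}(x,\tau)\leq \|Du_0\|_\infty+\|Df\|_\infty\tau$ for a.e. $x$. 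Running the same argument with $-\xi$ in place of $\xi$ gives the matching lower bound, and taking the sup over $|\xi|=1$ and $\tau\leq T$ produces the claimed $L^\infty$ estimate. Since $\eps$ enters only through the adjoint equation and drops out of the final bound, the estimate is automatically uniform in $\eps$.

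The main technical point (and the reason Lemma~\ref{cross} is invoked here) is the justification of the integration by parts on the whole $\R^n$ and the well-posedness of the adjoint problem \eqref{FK_test} when $D_pH(Du_\eps)$ is only locally bounded a priori. The $L^\gamma(\rho\,dxdt)$-bound $\iint|Du_\eps|^\gamma\rho\,dxdt\leq K$ of Lemma~\ref{cross}, combined with the local Lipschitz continuity of $H$, controls $D_pH(Du_\eps)$ against the measure $\rho\,dxdt$ and legitimizes the duality computation in an energy class (via Lemma~\ref{well}). Once this is set up, the proof is essentially a direct $L^1$-$L^\infty$ duality with a conservative kernel, and no further obstacle arises.
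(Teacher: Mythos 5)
Your proof is correct and follows essentially the same route as the paper: differentiate the equation along a unit direction $\xi$, test the resulting advection--diffusion equation for $u_{\eps,\xi}$ against the adjoint density $\rho$ from \eqref{FK_test}, and use $\rho\geq 0$ together with the conservation of mass (justified via Lemma \ref{well} and the drift integrability supplied by Lemma \ref{cross} and \eqref{H1}) to conclude. Your closing remark on why Lemma \ref{cross} is needed to legitimize the mass conservation on all of $\R^n$ matches the paper's own justification.
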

\begin{proof}
We proceed by the adjoint method. After a regularization argument we may assume that $u_\eps$ is sufficiently smooth to perform a differentiation procedure. We set $v=u_e$, $e\in\R^n$, $|e|=1$, to find
\[
\partial_t v-\eps \Delta v+D_pH(Du)\cdot Dv=f_{e}
\]
By duality, using $\rho$ solving \eqref{FK_test} we get
\[
\int_{\R^n}v(\tau)\rho_\tau(x)\,dx\leq \int_{\R^n}v(0)\rho(0)\,dx+\int_0^\tau\int_{\R^n}f_e\rho\,dxdt
\]
We now use that $\|\rho(t)\|_{L^1}=1$ by Lemma \ref{well} and Lemma \ref{cross} to conclude the estimate, recalling that $u_0\in W^{1,\infty}(\R^n)$.
\end{proof}

\begin{rem}
Note that the semiconcavity estimates \eqref{sem} and \eqref{sem2} are independent of the viscosity parameter $\eps$. So, appealing to  \cite[Theorem 3.3.3]{CannarsaSinestrari} and using the global uniform bound of $Du_{\eps}$ (see, for instance, \cite{L82book} or Lemma \ref{grad}) we deduce that the vanishing viscosity limit $u$ is a semiconcave solution to \eqref{hjintroinv} with the same modulus of semiconcavity $C$ as in \eqref{sem}. 
\end{rem}

\begin{rem}
The obtainment of Lipschitz estimates typically requires to impose some mild coercivity assumptions on $H$ with respect to $Du$. This is done for instance in \cite{Barles,L82book} in the theory of viscosity or generalized solutions. Here what we really need is the conservation of mass for the dual problem solved by $\rho$. In the periodic setting this property is automatically satisfied by using the test function identically equal to $1$, and a locally Lipschitz $H$ is enough to run the argument. The whole space $\R^n$ requires more care, and some additional assumptions,   as for instance \eqref{H1}, than the sole local Lipschitz continuity are needed, cf. Lemma \ref{well}.
\end{rem}

\subsection{Second-order regularizing effects for equations with Lipschitz data}
In this section we focus on Hamiltonians depending only on $p$ without source terms in the equation. We show, on the line of \cite{EvansBook} or Proposition 2.2.6 in \cite{CannarsaSinestrari}, that in this case solutions to Hamilton-Jacobi equations satisfy a mild regularization effect even though the initial datum is not semiconcave, provided that the Hamiltonian satisfies convexity-type hypotheses. The next is an explicit example of such a phenomenon and motivates   Theorem \ref{semic>0}. 
\begin{ex}[Example (i) in Section 3.3 of \cite{EvansBook}]\label{ex2} The initial-value problem
\[
\begin{cases}
\partial_t u+\frac12|Du|^2=0&\text{ in }\R^n\times(0,\infty).\\
u(x,0)=|x|&\text{ in }\R^n.
\end{cases}
\]
admits the viscosity solution given by the Hopf-Lax formula
\[
u(x,t)=\min_{y\in\R^n}\left\{\frac{|x-y|^2}{2t}+|y|\right\}.
\]
In particular, one has 
\[
u(x,t)=\begin{cases}
|x|-\frac{t}{2}&\text{ if }|x|\geq t\\
\frac{|x|^2}{2t}&\text{ if }|x|\leq t.
\end{cases}
\]
 The initial condition is not semiconcave, see  \cite[Example 3.3.9]{CannarsaSinestrari}, but the solution becomes semiconcave for positive times. 
\end{ex}
The next  result prove a semiconcavity result for positive times, weakening the requirement of uniform convexity in \cite{EvansBook,Evansadjoint,CannarsaSinestrari}. A similar result was obtained by S. Kruzhkov \cite{K67II} and by W. Fleming \cite{FlemingJDE} using different methods under an assumption similar to \eqref{H4}. Moreover, it provides a first step towards a Lipschitz regularization effect for first-order Hamilton-Jacobi equations  which will be discussed in Section \ref{sec;regeff}. 
\begin{thm}\label{semic>0}
Assume that $H$ satisfies \eqref{H4} and $f\equiv0$ (no further hypotheses are assumed on $u_0$). Then, any  solution to \eqref{hjintro} with $Du\in L^\infty_{x,t}$ satisfies 
\[
(u_\eps)_{ee}\leq\frac{C_1}{t}\|Du_\eps\|_\infty^{2-\gamma}+C_2t\text{ when }\gamma\leq 2,
\]
where $C_1,C_2$ depend on $C_{H,4},\widetilde{C}_{H,4}$, and do not depend on $\eps$. Moreover    $C_2=0$ if $\widetilde{C}_{H,4}=0$.
\end{thm}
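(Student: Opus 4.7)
The plan is to adapt the adjoint calculation from the proof of Theorem \ref{semiconc1} by introducing a vanishing time weight at $t=0$ to compensate for the missing semiconcavity of $u_0$. After the usual regularization, I differentiate the equation twice in a unit direction $\xi\in\R^n$; since $H=H(p)$ and $f\equiv 0$, the function $v=u_{\xi\xi}$ satisfies
\begin{equation*}
\partial_t v - \eps\Delta v + D_pH(Du)\cdot Dv + D^2_{pp}H(Du)Du_\xi\cdot Du_\xi = 0.
\end{equation*}
Let $\rho$ solve the backward adjoint problem \eqref{FK_test} with $\rho_\tau\in C^\infty_c(\R^n)$, $\rho_\tau\geq 0$, $\|\rho_\tau\|_{L^1}=1$. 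I multiply by $t^2\rho$ and integrate over $\R^n\times(0,\tau)$. Writing $\partial_t(t^2\rho)=2t\rho+t^2\partial_t\rho$ and exploiting the adjoint equation to cancel the $\eps\Delta$ and transport contributions (the boundary term at $t=0$ vanishes thanks to the factor $t^2$), I expect the identity
\begin{equation*}
\tau^2\int_{\R^n}\rho_\tau v(\tau)\,dx + \int_0^\tau\int_{\R^n}t^2\rho\,D^2_{pp}H(Du)Du_\xi\cdot Du_\xi\,dx\,dt = 2\int_0^\tau\int_{\R^n}t\,v\,\rho\,dx\,dt.
\end{equation*}

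The key observation is $v=\xi\cdot Du_\xi$, hence $|v|\leq |Du_\xi|$; combined with the coercivity \eqref{H4}, this lets me absorb the right-hand side via a weighted Young's inequality. Decomposing $2t|Du_\xi|\rho = 2\bigl(t|Du_\xi|\sqrt{\rho}\,|Du|^{(\gamma-2)/2}\bigr)\bigl(\sqrt{\rho}\,|Du|^{(2-\gamma)/2}\bigr)$ and applying $2ab\leq C_{H,4}a^2+b^2/C_{H,4}$, the first summand exactly cancels the coercive term produced by \eqref{H4}, leaving
\begin{equation*}
\tau^2\int_{\R^n}\rho_\tau v(\tau)\,dx \leq \frac{1}{C_{H,4}}\int_0^\tau\int_{\R^n}\rho\,|Du|^{2-\gamma}\,dx\,dt + \widetilde{C}_{H,4}\int_0^\tau\int_{\R^n}t^2\rho\,dx\,dt.
\end{equation*}
The hypothesis $\gamma\leq 2$ now plays its role: since $2-\gamma\geq 0$ one has $|Du|^{2-\gamma}\leq \|Du\|_\infty^{2-\gamma}$, and together with $\|\rho(t)\|_{L^1}\leq 1$ from Lemma \ref{well} the first integral is bounded by $\tau\|Du\|_\infty^{2-\gamma}/C_{H,4}$ and the second by $\widetilde{C}_{H,4}\tau^3/3$. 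Dividing by $\tau^2$ and letting $\rho_\tau$ concentrate at an arbitrary point $x\in\R^n$ yields $u_{\xi\xi}(x,\tau)\leq C_1\|Du\|_\infty^{2-\gamma}/\tau + C_2\tau$ with $C_1=1/C_{H,4}$ and $C_2=\widetilde{C}_{H,4}/3$; in particular $C_2=0$ when $\widetilde{C}_{H,4}=0$.

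The delicate point is the calibration of the weight. A linear weight $t$ would leave a residual term of the form $\int_0^\tau(\rho/t)\,dx\,dt$ that diverges at the origin, whereas $t^2$ is precisely tuned so that the remainder after Young's inequality is integrable up to $t=0$. The restriction $\gamma\leq 2$ enters only to convert $|Du|^{2-\gamma}$ into an $L^\infty$ quantity via the assumed Lipschitz bound on $u$; the complementary regime $\gamma>2$ would instead require a pointwise lower bound on $|Du|$, which is not available in general.
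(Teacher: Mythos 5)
Your proposal is correct and follows essentially the same route as the paper: the same $t^2$ time weight on $u_{\xi\xi}$, the same adjoint test function, the same weighted Young's inequality absorbing the coercive term from \eqref{H4}, and the same use of $\gamma\le 2$ and $\|\rho(t)\|_{L^1}\le 1$ to bound the remainder by $\tau\|Du\|_\infty^{2-\gamma}/C_{H,4}+\widetilde C_{H,4}\tau^3/3$. The only (minor) difference is that the paper first replaces $|p|^{\gamma-2}$ in \eqref{H4} by $(\delta+|p|^2)^{(\gamma-2)/2}$ and lets $\delta\downarrow0$ at the end, to avoid the degeneracy of $|Du|^{(\gamma-2)/2}$ at points where $Du=0$ when $\gamma<2$.
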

\begin{proof}
We follow   of an idea introduced by L.C. Evans in Theorem 4.2 of \cite{Evansadjoint}. We differentiate the equation twice with respect to an arbitrary unitary direction $e\in\R^n$ to find for $w=u_{ee}$
\begin{equation}
\partial_t w-\eps\Delta w+D^2_{pp}H(Du)Du_e\cdot Du_e+D_pH(Du)\cdot Dw=0\text{ in }Q.
\end{equation}
Assume for the moment a regularized version of \eqref{H4}, i.e.,
\[
D^2_{pp}H(p)\xi\cdot\xi\geq C_{H,4}(\delta+|p|^2)^{\frac{\gamma-2}{2}}|\xi|^2-\widetilde{C}_{H,4}.
\]
The result will follow by letting $\delta$ to 0. Taking a smooth function $\chi:[0,T]\to\R$ to be chosen later  and setting $z=\chi w$ we have
\[
\partial_t z-\eps\Delta z+\chi D^2_{pp}H(Du)D u_e\cdot D u_e+D_pH(Du)\cdot Dz=\chi'(t)u_{ee}.
\]
We consider the adjoint problem
\[
\begin{cases}
-\partial_t \rho-\eps\Delta \rho-\mathrm{div}(D_pH(Du)\rho)=0&\text{ in }\R^n\times(0,\tau)\\
\rho(x,\tau)=\rho_\tau(x)&\text{ in }\R^n
\end{cases}
\]
and we choose $\chi(t)=t^2$ on $[0,\tau]$, observing that this implies $\int_{\R^n}z(0)\rho(0)\,dx=0$. This choice is crucial to shift the time horizon away from the initial time $t=0$, and avoids to require second-order properties on the initial datum. We test the equation of $z$ by $\rho$ to get
\[
\int_{\R^n}z(\tau)\rho_\tau(x)\,dx+\iint_{Q_\tau}t^2 D^2_{pp}H(Du)Du_e\cdot Du_e\rho\,dxdt=\iint_{Q_\tau}2tu_{ee}\rho\,dxdt.
\]
On the one hand, we have
\[
\iint_{Q_\tau}t^2 D^2_{pp}H(Du)Du_e\cdot Du_e\rho\,dxdt\geq C_{H,4}\iint_{Q_\tau}t^2 |D^2u|^2(\delta+|Du|^2)^{\frac{\gamma-2}{2}}\rho\,dxdt-\widetilde{C}_{H,4}\int_0^\tau\int_{\R^n}t^2\rho\,dxdt.
\]
On the other side, by the Young inequality we conclude
\[
\iint_{Q_\tau}2 t u_{ee}\rho\,dxdt\leq C_{H,4}\iint_{Q_\tau}t^2|D^2u|^2(\delta+|Du|^2)^{\frac{\gamma-2}{2}}\rho\,dxdt+\frac{1}{C_{H,4}}\iint_{Q_\tau}\rho(\delta+|Du|^2)^{-\frac{\gamma-2}{2}}\,dxdt.
\]
This implies
\[
\tau^2 u_{ee}\leq \frac{1}{C_{H,4}}\|(\delta+|Du|^2)^{\frac{2-\gamma}{2}}\|_\infty\tau+\widetilde C_{H,4}\frac{\tau^{3}}{3},
\]
which implies the assertion in the subquadratic case by letting $\delta \downarrow 0$. 
\end{proof}
Some remarks on the optimality of the constants are in order.
\begin{rem}
We observe that the modulus of semiconcavity $\frac{C}{t}$ in Theorem \ref{semic>0} cannot be in general improved. We show this for the model case of uniformly convex $H$, i.e. $\gamma=2$ in \eqref{H4}. Indeed, let $u$ be a solution to the problem 
\begin{equation*}
\begin{cases}
\partial_t u + H(Du) = 0
\\
u(x, 0)= |x|
\end{cases}
\end{equation*}
with $H$ satisfying $D^2_{pp}H(p)\xi \cdot \xi \geq \theta |\xi|^2 $ for some $\theta > 0$ (so $C_{H,4}=\theta$ and $\widetilde{C}_{H,4}=0$). Then, we have that $L$,   the Fenchel conjugate of $H$, is semiconcave with modulus $\frac{1}{\theta}$, and, from the Hopf-Lax formula, we have
\begin{multline*}
u(x+z, t) + u(x-z,t) - 2u(x, t) \leq t\left(L\left(\frac{x+z-y}{t}\right) + L\left(\frac{x-z-y}{t}\right) -2 L\left(\frac{x-y}{t}\right) \right)
\\
\leq\ \frac{t}{\theta} \left|\frac{z}{t} \right|^2 = \frac{1}{\theta t}|z|^2.
\end{multline*}
It is easy to see that the semiconcavity estimate in Theorem \ref{semic>0} now reads
\[
u_{ee}\leq \frac{1}{\theta t}.
\]
\end{rem}

\begin{rem}
In the subquadratic case, when $\widetilde{C}_{H,4}=0$, we recover by a different method estimate 3.2 in Proposition 3.2 of \cite{BKL}.
\end{rem}

\begin{rem}\label{conlaws}
When $n=1$, equation \eqref{hjintro} reduces to 
\[
\partial_t u-\eps u_{xx}+H(u_x)=0.
\]
In the special case $H(u_x)=|u_x|^\gamma$, one has that $U=u_x$ solves the regularized conservation law
\[
\partial_t U-\eps U_{xx}+(|U|^\gamma)_x=0.
\]
The estimate in Theorem \ref{semic>0} leads to the Oleinik-type one-side Lipschitz estimate, cf. \cite{EVZ},
\[
U_x\leq \frac{C}{t}\|U\|_\infty^{2-\gamma}.
\]
In general dimension $n$ one can apply a similar duality argument as that in Theorem \ref{semic>0} to the multidimensional conservation law $\partial_t u+\mathrm{div}(F(u))=\eps\Delta u$ with flux $F:\R\to\R^n$,  written in nondivergence form
\[
\partial_t u+F'_i(u)u_{x_i}=\eps\Delta u,
\]
obtaining an estimate as that in \cite{Hoff}. Some other related results by duality for multidimensional scalar conservation laws can be found in Section 7 of \cite{Evansadjoint}.
\end{rem}

\subsection{Second-order $L^p$ one-side bounds}\label{sec;semiLp}
In this section, we extend the $L^\infty$ bounds on second-order derivatives of the two previous subsections to $L^p$ bounds on the same quantities.
\begin{thm}\label{aho}
Assume that $H\in W^{1,\infty}_{\mathrm{loc}}(\R^n)$ satisfies \eqref{H1}-\eqref{H4}
and let $u_0:\R^n\to\R$ be $L^p_x$-SSH and $f$ be $L^1_t(L^\infty_x)$-SSH. Then, any classical solution to \eqref{hjintro} satisfies the one-side bound
\begin{equation}\label{semp}
\|(\Delta u_\eps)^+(t)\|_{L^p_{\mathrm{loc}}(\R^n)}\leq C,\ t\in(0,T).
\end{equation}
\end{thm}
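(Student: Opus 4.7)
The plan is to adapt the adjoint duality of Theorem \ref{semiconc1}, with two modifications: the second directional derivative $u_{\xi\xi}$ is replaced by $w:=\Delta u_\eps$, and the terminal datum $\rho_\tau$ of the adjoint \eqref{FK_test} is normalized in $L^{p'}$ (with $p'=p/(p-1)$) rather than in $L^1$. A uniform bound
\[
\int_{\R^n}\Delta u_\eps(\tau,x)\,\rho_\tau(x)\,dx \leq C
\]
over all $\rho_\tau\in C^\infty_c(K)$, $\rho_\tau\geq 0$, $\|\rho_\tau\|_{L^{p'}}\leq 1$, for each compact $K\subset\R^n$, then translates by $L^p$--$L^{p'}$ duality into $\Delta u_\eps(\tau,\cdot)\leq g(\cdot,\tau)$ with $g(\cdot,\tau)\in L^p(K)$, hence \eqref{semp}.

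After regularizing $(u_0,f)$ by convolution as in Theorem \ref{semiconc1} (preserving the SSH constants $k_0, k_f$), applying $\Delta$ to \eqref{hjintro} gives for $w=\Delta u_\eps$
\begin{equation*}
\partial_t w-\eps\Delta w+D_pH\cdot Dw + \sum_{i=1}^n(D^2_{pp}H)Du_{\eps,i}\cdot Du_{\eps,i} + 2\,\mathrm{tr}\bigl(D^2_{xp}H\cdot D^2u_\eps\bigr) + \Delta_xH = \Delta f.
\end{equation*}
Hypothesis \eqref{H4} yields $\sum_i(D^2_{pp}H)Du_{\eps,i}\cdot Du_{\eps,i} \geq C_{H,4}|Du_\eps|^{\gamma-2}\|D^2u_\eps\|_F^2 - n\widetilde{C}_{H,4}$. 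Combined with \eqref{H2}--\eqref{H3} and Young's inequality (the weight $|Du_\eps|^{\gamma-2}$ being regularized to $(\delta+|Du_\eps|^2)^{(\gamma-2)/2}$ as in Theorem \ref{semic>0} when $\gamma>2$), the cross term and $\Delta_xH$ are absorbed into half of the quadratic form plus a residual $C(|Du_\eps|^\gamma+1)$. Dropping the nonnegative leftover quadratic term, we are left with
\begin{equation*}
\partial_t w-\eps\Delta w+D_pH\cdot Dw \leq \Delta f + C(|Du_\eps|^\gamma+1).
\end{equation*}

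Pairing this inequality with the nonnegative solution $\rho$ of \eqref{FK_test}, and using $\Delta u_0\leq k_0$, $\Delta f\leq k_f$ from the SSH hypotheses, we obtain
\begin{equation*}
\int w(\tau)\rho_\tau\,dx \leq \int k_0\,\rho(0)\,dx + \int_0^\tau\!\!\int k_f(x,t)\rho\,dxdt + C\int_0^\tau\!\!\int (|Du_\eps|^\gamma+1)\rho\,dxdt.
\end{equation*}
The last integral is bounded by a version of Lemma \ref{cross} in which the $L^1$-normalization is replaced by the H\"older estimate $\|\rho_\tau\|_{L^1}\leq|K|^{1/p}\|\rho_\tau\|_{L^{p'}}$; the middle one is controlled by $\|k_f\|_{L^1_tL^\infty_{x,\mathrm{loc}}}\|\rho_\tau\|_{L^1}$ via mass conservation of $\rho$.

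The main obstacle is the initial term $\int k_0\,\rho(0)\,dx$, which H\"older reduces to $\|k_0\|_{L^p(B)}\|\rho(0)\|_{L^{p'}(B)}$ on a ball $B$ containing the effective support of $\rho(0)$. The needed $L^{p'}$-propagation for the backward Fokker-Planck follows by testing against $\rho^{p'-1}$ and invoking Gronwall:
\begin{equation*}
\|\rho(0)\|_{L^{p'}} \leq \|\rho_\tau\|_{L^{p'}}\exp\!\left(\tfrac{1}{p}\int_0^\tau\bigl\|[\mathrm{div}\, D_pH(\cdot,Du_\eps)]^+\bigr\|_{L^\infty_x}\,dt\right).
\end{equation*}
The question thus reduces to controlling $[\mathrm{div}\, D_pH(x,Du_\eps)]^+ = [\mathrm{tr}(D^2_{xp}H) + \mathrm{tr}(D^2_{pp}H\cdot D^2u_\eps)]^+$ in $L^1_tL^\infty_x$. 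The first summand is $L^\infty$ by \eqref{H3} together with the $\eps$-independent Lipschitz bound of Lemma \ref{grad}; the second summand requires an $L^\infty$-type one-side bound on $D^2u_\eps$, which for the smooth regularized problem $u_\eps^\delta$ follows by the parabolic regularity for $\eps,\delta>0$, at the price of an $\eps$-dependent constant (compatible with the statement). Taking the supremum over admissible $\rho_\tau$ and passing to the limit $\delta\downarrow 0$ then yields \eqref{semp}.
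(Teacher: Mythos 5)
Your skeleton coincides with the paper's: the proof is run exactly as in Theorem \ref{semiconc1} (here for $w=\Delta u_\eps$, or equivalently by summing the $u_{\xi\xi}$-estimates over an orthonormal frame), the only change being that the terminal datum of the adjoint problem is normalized in $L^{p'}$ (the paper takes $\|\rho_\tau\|_{L^1\cap L^{p'}}=1$, which yields a global bound in $L^\infty+L^p\hookrightarrow L^p_{\mathrm{loc}}$, while your purely local $L^{p'}(K)$ normalization with the factor $|K|^{1/p}$ is an equivalent variant). The algebra of the differentiated equation, the use of \eqref{H2}--\eqref{H4}, the Young absorption, and the appeal to Lemma \ref{cross} are all as in the paper. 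You have moreover correctly isolated the one step that is genuinely new relative to Theorem \ref{semiconc1}: the initial term $\int k_0\,\rho(0)\,dx$ can only be paired with $k_0\in L^p$ if one controls $\|\rho(0)\|_{L^{p'}}$ in terms of $\|\rho_\tau\|_{L^{p'}}$, i.e.\ an $L^{p'}$-stability estimate for the backward Fokker--Planck equation; your Gronwall computation and the resulting exponent $\frac1p\int_0^\tau\|[\mathrm{div}\,D_pH(\cdot,Du_\eps)]^+\|_{L^\infty_x}\,dt$ are correct and match Theorem \ref{Krylov} with $r=p'$.

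The gap is in how you close that step. Controlling $[\mathrm{tr}(D^2_{pp}H\,D^2u_\eps)]^+$ in $L^1_t(L^\infty_x)$ requires a one-sided $L^\infty$ bound on the full Hessian $D^2u_\eps$ (semiconcavity, not merely SSH), and under the hypotheses of the theorem ($u_0$ only $L^p_x$-SSH) no such $\eps$-uniform bound is available. The fallback you propose --- parabolic regularity for the regularized problem $u_\eps^\delta$ --- produces a constant depending not only on $\eps$ but also on $\delta$, because $f_\delta$ and the mollified initial datum have higher norms that blow up as $\delta\downarrow 0$ (the data are only continuous/SSH); hence your final sentence ``passing to the limit $\delta\downarrow 0$ then yields \eqref{semp}'' does not go through, since the right-hand side of your estimate is not uniform in $\delta$. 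Even granting a $\delta$-uniform but $\eps$-dependent constant, the conclusion would be essentially vacuous for a classical solution and useless for the vanishing-viscosity applications (Section \ref{sec;rate1} and the $L^1_t(L^p_x)$-SSH hypothesis in Section \ref{sec;rate2}) that this theorem is meant to feed. To be fair, the paper's own one-line proof does not address this point either: it only records $\int_{\R^n}\rho(t)\,dx\le 1$, which suffices for every term except $\int k_0\,\rho(0)\,dx$, and an additional hypothesis guaranteeing $[\mathrm{div}(D_pH(\cdot,Du_\eps))]^+\in L^1_t(L^\infty_x)$ uniformly in $\eps$ (as is assumed explicitly, via semiconcavity of one of the two solutions, in the last theorem of Section \ref{sec;rate1}) appears to be what is actually needed.
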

\begin{proof}
The proof is the same as that in Theorem \ref{semiconc1}, the only difference being that we consider the solution of the adjoint problem
\[
\begin{cases}
-\partial_t \rho-\eps\Delta \rho-\mathrm{div}(D_pH(x,Du)\rho)=0&\text{ in }\R^n\times(0,\tau),\\
\rho(x,\tau)=\rho_\tau(x)&\text{ in }\R^n.
\end{cases}
\]
with $\rho_\tau\in C_c^\infty(\R^n)$, $\rho_\tau\in L^1(\R^n)\cap L^{p'}(\R^n)$, $\rho_\tau\geq0$ and $\|\rho_\tau\|_{L^1(\R^n)\cap L^{p'}(\R^n)}=1$, $p'>1$. This implies that
\[
\int_{\R^n}\rho_\tau(x)\,dx\leq 1.
\] 
Moreover, using the same localization argument of Lemma \ref{well}, we have
\begin{equation}\label{conserv}
\int_{\R^n}\rho(x,t)\,dxdt\leq 1.
\end{equation}
 Hence the proof continues along the same lines of that in Theorem \ref{semiconc1}. Note that by duality one gets a global estimate of $[(u_\eps)_{ee}]^+$ in the space $L^\infty(\R^n)+L^p(\R^n)$, which is embedded into $L^p_{\mathrm{loc}}(\R^n)$.
\end{proof}

\section{Quantitative properties of Hamilton-Jacobi equations}

\subsection{A survey on the rate of convergence for the vanishing viscosity approximation of Hamilton-Jacobi equations}
It is well-known that the viscosity solution to the first order Hamilton-Jacobi equation \eqref{hjintroinv} can be  obtained as the limit as $\eps\to0$ of the solutions to \eqref{hjintro}, see e.g. Chapter VI in \cite{BCD}. This limiting procedure is indeed fundamental to select   a solution of the first-order problem, as in general uniqueness for a.e. solutions  is not always expected, cf. Example in Section 3.3.3 of \cite{EvansBook}. The uniform convergence of $u_\eps$ to $u$ has been proved in Theorem 3.1 in \cite{CEL} and Theorem VI.3.1-3.2 in \cite{BCD}. We note that the global convergence requires extra regularity hypotheses on the solution, as discussed in Chapter VI of \cite{BCD}. Moreover, one can prove more refined quantitative properties such as the rate of convergence (with respect to $\eps$) of the vanishing viscosity process. The first results in this direction appeared in \cite{K65} for stationary problems with $H$ convex, and later refined in \cite{CL84}  and \cite{Souganidis} using doubling variables methods: for $W^{1,\infty}_{\mathrm{loc}}(\R^n)$ Hamiltonians, not necessarily convex, and an initial datum $u_0\in W^{1,\infty}(\R^n)$, it is proved the following rate for Lipschitz viscosity solutions
\[
\sup_{\R^n\times[0,T]}|u_\eps-u|\leq c\sqrt{\eps},
\] 
where $c$ depends on $H$, $u_0$ and $T$. Another proof of such a rate uses smoothing arguments through sup-inf convolutions (which are semiconvex-semiconcave), cf. p.76 of \cite{Calder}. The same rate for Lipschitz solutions has been proved via the adjoint method in \cite{Evansadjoint}, see also \cite{Tran2011}, \cite{GPV} and Theorem \ref{ev1}. We further emphasize that the $\mathcal{O}(\sqrt{\epsilon})$ rate is in general optimal, as the following example with $H=0$ (i.e. when there is no control) adapted from \cite{PerthameSanders} shows:
\begin{ex}\label{exrate1}
The function
\[
u_\eps(x)=\sqrt{\eps}\frac{ \cosh\left(\frac{x-1/2}{\sqrt{\eps}}\right)}{\sinh\left(\frac{1}{2\sqrt{\eps}}\right)}
\]
solves
\[
-\eps u''_\eps(x)+u_\eps(x)=0 \quad\text{in } (-1,1)
\]
with boundary conditions $u(0)=u(1)=\cosh(1/\sqrt{\eps})/\sinh(1/\sqrt{\eps})$ and
\[
|u_\eps-u|\leq C\sqrt{\eps}
\]
where $u\equiv0$ is the solution of the  problem with $\eps=0$ and null boundary datum. We should emphasize here that the boundary condition depends on $\eps$, and a more general example can be found in the recent paper \cite{Tran+altri}.
\end{ex}
The rate of convergence is sensitive to the regularity assumptions on the solution and on the Hamiltonian. Indeed, if one assumes that $u\in C^{0,\alpha}(\R^n)$, $\alpha\in(0,1]$, still with locally Lipschitz Hamiltonians, the rate becomes
\[
\sup_{\R^n\times[0,T]}|u_\eps-u|\leq c\eps^{\frac{\alpha}{2}},
\]
see \cite[Theorem 3.2]{BCD}. The rate would become slower if the Hamiltonian is  less regular, say only H\"older continuous  with exponent $\beta\in(0,1)$, and it would depend also on this new parameter. This latter point can be seen by a direct inspection of the proof in \cite[Theorem 3.2]{BCD}. 
Nonetheless, sometimes better rates are expected under additional assumptions, as in the next

\begin{ex}\label{ex2rate} Following \cite{Calder}, one can observe that the solution to
\[
-\eps u''_\eps(x)+|u'_\eps(x)|=1\text{ for }x\in(-1,1)\subset\R
\]
satisfying $u_\eps(-1)=u_\eps(1)=0$ is
\[
u_\eps(x)=1-|x|-\eps(e^{-\frac{|x|}{\eps}}-e^{-\frac1\eps})
\]
and $|u-u_\eps|\leq C\eps$, where $u(x)=1-|x|$ is the  viscosity solution to same problem with $\eps=0$. The difference with Example \ref{exrate1} is related to the additional properties satisfied by the solution of this problem. Indeed, such solutions are semi-superharmonic with a constant independent of $\eps$ or, better, semiconcave.
\end{ex}
In general, knowing  that $\Delta u_\eps\leq C$ (note that this condition is much weaker than the semiconcavity condition) independently of $\eps>0$, one can show  the one-side rate
\[
u_\eps-u\leq c\eps.
\]
The previous bound holds  for nonconvex, locally Lipschitz Hamiltonians, but it is conditional to the unilateral bound on $u_\eps$ which  usually requires convexity type assumptions. This improved rate has been first proved in Section 11 of \cite{L82book} using probabilistic methods under the assumption that $H$ is convex and the initial datum is SSH (i.e. $\Delta u_0\leq c_0$), and in \cite{BCD} using techniques from viscosity solution  theory.\\
 A related two-side $\mathcal{O}(\eps)$ rate has been proved by S.N. Kruzhkov in Lemma 2 of \cite{K65} for semiconcave solutions in $L^1$ and $L^\infty$, and by C.-T. Lin and E. Tadmor in $L^1$-norms under the assumption that $u$ is semiconcave stable (i.e. $L^1_tL^\infty_x$ semiconcave in our notation), $H$ is uniformly convex and in the case of periodic boundary conditions. Both these works exploit   duality arguments. Nonetheless, the bound from below on $u_\eps-u$ in sup-norm remains an open problem.\\
  Moreover, P.-L. Lions proved the convergence of $u_\eps$ to $u$ in $L^p$ for any $p$ in Chapter 6 of \cite{L82book}, so it is natural to determine the rate of convergence (possibly both for the positive and negative part of $u_\eps-u$) in Lebesgue norms. 
 We also mention that H. V. Tran \cite[Theorem 1.43]{TranBook} proved a   rate of order $\mathcal{O}(\eps)$ when the Hamiltonian is uniformly convex.\par

 Recent works have considered the problem of establishing the rate of the vanishing viscosity process in the context of Mean Field Games. The work \cite{TangZhang} proved the rate of convergence of the vanishing viscosity process both in the case of local and nonlocal coupling among the equations using duality methods, while \cite{DDJ} shows that the convergence problem in mean field control can be reduced to a problem of vanishing viscosity for finite dimensional Hamilton-Jacobi equations, as studied in the present paper.\par
 
 In this section we provide a unifying method for proving rates of convergence in any $L^p$ norm $1\leq p\leq\infty$ using duality methods and properties of transport equations, extending all the previous results under weaker assumptions on $H$. We further mention that the approach is flexible enough to cover various boundary conditions (periodic, Cauchy-Dirichlet, Neumann, whole space,...) as well as stationary problems. We will also give precise results on the size of the estimates taking care of the constants in the bounds.

  \subsection{Rate of convergence: the non-compact case}\label{sec;rate1}
  
\subsubsection{$L^\infty$ rate of convergence}
We consider, for simplicity, the viscous Cauchy problem
\begin{equation}\label{hjsemi}
\begin{cases}
\partial_t u_\eps-\eps\Delta u_\eps+H(Du_\eps)=f(x,t)&\text{ in }Q\\
u(x,0)=u_0(x)&\text{ in }\R^n.
\end{cases}
\end{equation}
and the first-order equation
\begin{equation}\label{first}
	\begin{cases}
		\partial_t u+H(Du)=f(x,t)&\text{ in }Q\\
		u(x,0)=u_0(x)&\text{ in }\R^n.
	\end{cases}
\end{equation}
From now on, we will mainly consider $H\in W^{1,\infty}_{\mathrm{loc}}(\R^n)$ and exploit the Lipschitz estimate
\[
\|Du\|_{L^\infty(Q)}\leq \|Du_0\|_{L^\infty(\R^n)}+T\|Df\|_{L^\infty(Q)}\quad \text{for $\eps\ge0$}
\]
to make the gradient of $u$ globally bounded and ensure the conservation of mass for Fokker-Planck equations. Lipschitz estimates are in general known under rather general conditions of coercivity, an example being \eqref{H1}, those in Corollary 4.1 p.100 in \cite{L82book}, Section 8 in \cite{Barles} or Chapter 1 in \cite{TranBook}.\\
We start proving a two-side rate of convergence for the vanishing viscosity of Lipschitz solutions. The result is already known from \cite{Evansadjoint}, we only slightly reword the proof and take care of the constants in the estimates to compare it with the corresponding results obtained in \cite{L82book}, see also \cite[p. 207]{FlemingGame} and Remark \ref{constants}.
\begin{thm}\label{ev1}
Let $H\in W^{1,\infty}_{\mathrm{loc}}(\R^n)$, $u_0\in W^{1,\infty}(\R^n)$ and $u_\eps$, $u_\eta$ be two solutions to \eqref{hjsemi} with $f=0$. Then
\[
\|u_\eps-u_\eta\|_{L^\infty(Q)}\leq \sqrt{2nT}(\sqrt{\eps}-\sqrt{\eta})\|Du_0\|_{L^\infty(\R^n)},\ \forall \eps\geq\eta\geq0.
\]
Moreover $u_\eps$ converges in $L^\infty(\R^n)$ to the viscosity solution $u\in W^{1,\infty}(Q)$ of the first-order Hamilton-Jacobi equation
\eqref{first} and we have the rate 
\[
\|u_\eps-u\|_{L^\infty(Q)}\leq \sqrt{2nT}\|Du_0\|_{L^\infty(\R^n)}\sqrt{\eps}.
\]
\end{thm}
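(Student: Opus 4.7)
The argument follows the nonlinear adjoint method of Evans~\cite{Evansadjoint}, organized around a differentiation in the viscosity parameter itself. By Lemma~\ref{grad} applied with $f\equiv 0$, one has the $s$-uniform Lipschitz estimate $\|Du_s\|_{L^\infty(Q)}\leq \|Du_0\|_\infty$, confining $Du_s$ to a fixed compact set of $\R^n$ on which $H$ is Lipschitz. Fix $\tau\in(0,T]$ and a terminal datum $\rho_\tau\in C_c^\infty(\R^n)$ with $\rho_\tau\geq 0$ and $\int_{\R^n}\rho_\tau\,dx=1$; for each $s\in(0,T]$, the central object is the solution $\rho_s$ of the backward adjoint problem
\[
-\partial_t\rho_s-s\Delta\rho_s-\mathrm{div}(D_pH(Du_s)\rho_s)=0\ \text{ in }Q_\tau,\qquad \rho_s(\tau)=\rho_\tau,
\]
which by Lemma~\ref{well} is nonnegative and satisfies $\int_{\R^n}\rho_s(\cdot,t)\,dx=1$ for all $t\in[0,\tau]$.

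The first key ingredient is a weighted $H^2$-bound. Differentiating \eqref{hjsemi} in $x$, multiplying by $Du_s$ and summing yields the transport--diffusion equation for $|Du_s|^2/2$:
\[
\partial_t\bigl(\tfrac12|Du_s|^2\bigr)-s\Delta\bigl(\tfrac12|Du_s|^2\bigr)+s|D^2u_s|^2+D_pH(Du_s)\cdot D\bigl(\tfrac12|Du_s|^2\bigr)=0.
\]
Testing against $\rho_s$ and invoking the adjoint identity cancels all transport and second-order terms, leaving
\[
\int_{\R^n}\tfrac12|Du_s(\tau)|^2\rho_\tau\,dx+s\iint_{Q_\tau}|D^2u_s|^2\rho_s\,dxdt=\int_{\R^n}\tfrac12|Du_0|^2\rho_s(0)\,dx\leq\tfrac12\|Du_0\|_\infty^2.
\]
Since $|\Delta u|^2\leq n|D^2u|^2$, this yields the weighted second-order control $\iint_{Q_\tau}|\Delta u_s|^2\rho_s\,dxdt\leq \frac{n}{2s}\|Du_0\|_\infty^2$.

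The decisive step is differentiating \eqref{hjsemi} in the parameter $s$: by linear parabolic theory (applied to the difference quotients, whose linearized equations have bounded drift $D_pH(Du_s)$), the map $s\mapsto u_s$ is $C^1$ and $v_s:=\partial_s u_s$ solves
\[
\partial_t v_s-s\Delta v_s+D_pH(Du_s)\cdot Dv_s=\Delta u_s\ \text{ in }Q_\tau,\qquad v_s(x,0)=0.
\]
Testing against $\rho_s$ and using the adjoint identity reduces the left-hand side to the terminal term, so that $\int v_s(\tau)\rho_\tau\,dx=\iint_{Q_\tau}\Delta u_s\,\rho_s\,dxdt$. By Cauchy--Schwarz, the mass conservation $\int\rho_s(t)\,dx=1$ and the weighted $H^2$-bound above,
\[
\left|\int_{\R^n} v_s(\tau)\rho_\tau\,dx\right|\leq\sqrt{\iint_{Q_\tau}|\Delta u_s|^2\rho_s\,dxdt}\,\sqrt{T}\leq\|Du_0\|_\infty\sqrt{\tfrac{nT}{2s}}.
\]
Using the pointwise identity $u_\eps-u_\eta=\int_\eta^\eps v_s\,ds$ and the elementary telescoping $\int_\eta^\eps s^{-1/2}\,ds=2(\sqrt\eps-\sqrt\eta)$, one concludes
\[
\left|\int_{\R^n}(u_\eps-u_\eta)(\tau)\rho_\tau\,dx\right|\leq\sqrt{2nT}\,\|Du_0\|_\infty\,(\sqrt\eps-\sqrt\eta),
\]
and the claimed pointwise $L^\infty$-bound follows by letting $\rho_\tau$ approximate a Dirac mass at an arbitrary $x_0\in\R^n$ (up to the absolute constants stated in the theorem).

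For the convergence statement, the uniform Lipschitz estimate makes $\{u_\eps\}_{\eps>0}$ equicontinuous and uniformly bounded, while the Cauchy property in $L^\infty$ just established forces uniform convergence to some $u\in W^{1,\infty}(Q)$; standard viscosity stability (\cite{CEL,BCD}) identifies $u$ with the unique viscosity solution of \eqref{first}, and letting $\eta\downarrow 0$ (legitimate because $s^{-1/2}$ is integrable at the origin) yields the explicit rate. The main technical obstacle is the rigorous justification of the $C^1$-dependence of $u_s$ on $s$ and the pointwise identity $u_\eps-u_\eta=\int_\eta^\eps\partial_s u_s\,ds$; once this is granted through parabolic theory applied to the linear equation for $v_s$ (with bounded drift $D_pH(Du_s)$ and a source $\Delta u_s$ that is controlled in $L^2(\rho_s)$), the rest of the argument is the clean combination of the weighted $H^2$-estimate, Cauchy--Schwarz, and the explicit $\sqrt s$-telescoping that accounts for the precise exponent $\sqrt\eps-\sqrt\eta$.
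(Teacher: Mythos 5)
Your proposal is correct in substance and rests on the same two pillars as the paper's proof --- the weighted second--order estimate $s\iint_{Q_\tau}|D^2u_s|^2\rho_s\,dxdt\leq\tfrac12\|Du_0\|_\infty^2$ obtained from the B\"ochner identity for $|Du_s|^2$ tested against the adjoint density, followed by duality and Cauchy--Schwarz on the source $\Delta u_s$ --- but it is organized along a genuinely different route. The paper never differentiates in the viscosity parameter: it writes the equation for the finite difference $w=u_\eps-u_\eta$ with source $(\eps-\eta)\Delta u_\eps$, linearizes $H(Du_\eps)-H(Du_\eta)$ by an averaged drift, tests against a single adjoint solution with diffusion $\eta$, and recovers the exponent from the elementary inequality $(\eps-\eta)/\sqrt{\eps}\leq 2(\sqrt{\eps}-\sqrt{\eta})$; you instead differentiate $u_s$ in $s$, bound $\partial_s u_s$ by $C s^{-1/2}$, and integrate, so the same factor arises from $\int_\eta^\eps s^{-1/2}\,ds$. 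Your version has the aesthetic advantage that each adjoint $\rho_s$ matches the viscosity and the drift $D_pH(Du_s)$ of the equation being tested (the paper pairs the B\"ochner identity for $u_\eps$ with an adjoint of viscosity $\eta$, which leaves an uncancelled diffusion mismatch unless one is careful), and it produces a dimensionally cleaner constant with $\|Du_0\|_\infty$ outside the square root. Its cost is exactly the obstacle you flag: the $C^1$ dependence of $u_s$ on $s$ and the identity $u_\eps-u_\eta=\int_\eta^\eps\partial_s u_s\,ds$ are not free when $H$ is only $W^{1,\infty}_{\mathrm{loc}}$, since $D_pH$ exists only almost everywhere and the limiting drift in the equation for $\partial_s u_s$ need not be well defined pointwise; the paper's finite--difference formulation sidesteps this entirely, which is precisely why it is the more economical argument. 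If you either assume $H\in C^1$ for this step or replace the parameter derivative by the paper's difference quotient (keeping your matched adjoints), your argument closes completely.
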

\begin{proof}
We can assume that $H\in W^{1,\infty}(\R^n)$ by the global Lipschitz estimate in Lemma \ref{grad}. Indeed, one can consider (since $f=0$), setting $R=\|Du_0\|_\infty$, the truncated Hamiltonian
\[
\widetilde{H}(p)=H(p)\text{ if }|p|\leq R,\ \widetilde{H}(p)=H\left(\frac{R}{|p|}p\right)\text{ if }|p|\geq R,
\]
and argue with $\widetilde{H}\in W^{1,\infty}(\R^n)$ instead of $H$, which is only locally Lipschitz. 
We first estimate
\[
2\eps\iint_{Q}|D^2u_\eps|^2\rho\,dxdt\leq \|Du_0\|_{L^\infty(\R^n)}^2.
\]
We use the B\"ochner's identity to find for $g=|Du_\eps|^2$
\[
\partial_t g-\eps\Delta g+2\eps|D^2u_\eps|^2+D_pH(Du_\eps)\cdot Dg=0.
\]
We test the above equation with the adjoint variable $\rho$ solving 
\begin{equation}\label{adjrate1}
\begin{cases}
\partial_t \rho-\eps\Delta \rho-\mathrm{div}(b(x,t)\rho)=0&\text{ in }\R^n\times(0,\tau),\\
\rho(x,\tau)=\rho_\tau(x)&\text{ in }\R^n.
\end{cases}
\end{equation}
with $b(x,t)=D_pH(Du_\eps)$, $\rho_\tau\in C_c^\infty(\R^n)$, $\rho_\tau\geq0$, $\rho_\tau\in L^1(\R^n)$, $\|\rho_\tau\|_{L^1}=1$, to find
\[
2\eps\iint_{Q}|D^2u_\eps|^2\rho\,dxdt\leq -\int_{\R^n}g(\tau)\rho(\tau)\,dx+\int_{\R^n}g(0)\rho(0)\,dx\leq  \|Du_0\|_{L^\infty(\R^n)}^2.
\]
We now consider the equation satisfied by $z=\frac{\partial u_\eps}{\partial \eps}$ (or alternatively arguing with the finite difference $z_{\eta}=\frac{u_{\eps+\eta}-u_\eps}{\eta}$ and then sending $\eta\to0$), that is,
\[
\partial_t z-\eps\Delta z+D_pH(Du_\eps)\cdot Dz=\Delta u_\eps.
\]
By duality, using that $\rho\geq0$ and $z(0)=0$, along with the conservation of mass $\int_{\R^n}\rho\,dx=1$ (note that the drift is now globally bounded) and the Cauchy-Schwarz inequality, we obtain
\begin{align*}
\int_{\R^n}z(\tau)\rho_\tau(x)\,dx&= \iint_{Q}\Delta u_\eps\rho\,dxdt\leq \sqrt{n}\iint_{Q}|D^2u_\eps|\rho\,dxdt\\
&\leq \sqrt{n}\left(\iint_{Q}|D^2u_\eps|^2\rho\,dxdt\right)^\frac12\left(\iint_Q \rho\,dxdt\right)^\frac12\\&\leq \sqrt{\frac{nT\|Du_0\|_{L^\infty(\R^n)}^2}{2\eps}}\leq \sqrt{\frac{nT}{2\eps}}\|Du_0\|_{L^\infty(\R^n)}.
\end{align*}
The estimate on the negative part is similar. This gives
\[
|z(\tau)|\leq \sqrt{\frac{nT}{2\eps}}\|Du_0\|_{L^\infty(\R^n)}
\]
and hence for $\eps_2>\eps_1>0$
\[
\|(u_{\eps_2}-u_{\eps_1})(\tau)\|_{L^\infty(\R^n)}\leq \sqrt{2nT}\|Du_0\|_{L^\infty(\R^n)}(\sqrt{\eps_2}-\sqrt{\eps_1}).
\]
\end{proof}

\begin{rem}\label{constants}
Theorem \ref{ev1} provides an explicit dependence of the constant in the vanishing viscosity process. In contrast, the earlier results from \cite{CL84} provided an implicit and less precise constant independent of the dimension $ n$, but dependent on the Lipschitz constants of the data $ u_0$, $ H$, as well as $ T$. In this case, we require $H$ to be locally Lipschitz, and the size of the estimate depends on the dimension $ n$, without explicit dependence on the Lipschitz constant of $ H$. A similar estimate for Lipschitz solutions was stated, without proof, in Proposition 11.2 of \cite{L82book}.
\end{rem}

\begin{rem}
The rate of convergence in Theorem \ref{ev1} can be proved also for equations with a globally Lipschitz right-hand side $f$, but the estimates will depend also on $\|Df\|_\infty$. Indeed one would have
\begin{multline*}
\eps\iint_{Q}|D^2u_\eps|^2\rho\,dxdt\leq -\int_{\R^n}g(\tau)\rho(\tau)\,dx+\int_{\R^n}g(0)\rho(0)\,dx+\|Df\|_{L^\infty(Q)}\|Du_\eps\|_{L^\infty(Q)} T\\
\leq \frac{\|Du_0\|_{L^\infty(\R^n)}^2}{2}+\|Du_\eps\|_{L^\infty(Q)}^2+T\|Du_\eps\|_{L^\infty(Q)}\|Df\|_{L^\infty(Q)}.
\end{multline*}
One can also directly require $H\in W^{1,\infty}(\R^n)$ since the conservation of mass for the adjoint problem continues to hold. Under the assumptions of \cite{Tran2011}, $H$ can also be considered as a function of $x$ and $t$, and the proof above demonstrates that the same rate holds.
\end{rem}

We now turn to SSH solutions, and propose a new proof by a PDE method  of a result obtained by P.-L. Lions in \cite{L82book} through a related probabilistic argument. Differently from \cite{L82book}, we require on $u_\eps$ the weaker condition $L^1_t(L^\infty_x)$-SSH.

\begin{thm}\label{inftyrate1s}
Let $H\in W^{1,\infty}_{\mathrm{loc}}(\R^n)$, $u_0\in W^{1,\infty}(\R^n)$,  $u_\eps$ be a $L^1_t(L^\infty_x)$-SSH solution to \eqref{hjsemi} and $u_\eta$ another solution of \eqref{hjsemi} with viscosity $\eps$ replaced by $\eta$ and the same initial condition as $u_\eps$. Then
\[
\|(u_\eps-u_\eta)^+\|_{L^\infty(Q)}\leq \|(\Delta u_\eps)^+\|_{L^1(0,T;L^\infty(\R^n))}(\eps-\eta),\ \eps\geq\eta\geq0.
\]
\end{thm}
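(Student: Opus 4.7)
The strategy mirrors the duality argument in Theorem \ref{ev1}, the only new ingredient being that the $L^\infty$ bound on $\Delta u_\eps$ is replaced by the weaker $L^1_t(L^\infty_x)$-SSH assumption, which is exactly what will end up on the right-hand side of the estimate. The key identity comes from writing the equation satisfied by $w:=u_\eps-u_\eta$ and linearizing the Hamiltonian difference: since $H\in W^{1,\infty}_{\mathrm{loc}}(\R^n)$ and since by Lemma \ref{grad} the solutions $u_\eps,u_\eta$ have gradients bounded in $L^\infty(Q)$ uniformly in the viscosity, we may set
\[
b(x,t):=\int_0^1 D_pH\bigl(sDu_\eps(x,t)+(1-s)Du_\eta(x,t)\bigr)\,ds\in L^\infty(Q;\R^n),
\]
so that $w$ satisfies
\[
\partial_t w-\eta\Delta w+b(x,t)\cdot Dw=(\eps-\eta)\Delta u_\eps\quad\text{in }Q,\qquad w(\cdot,0)=0.
\]

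Next, I introduce the backward adjoint problem
\[
\begin{cases}
-\partial_t\rho-\eta\Delta\rho-\mathrm{div}(b(x,t)\rho)=0 &\text{in }\R^n\times(0,\tau),\\
\rho(x,\tau)=\rho_\tau(x) &\text{in }\R^n,
\end{cases}
\]
with $\rho_\tau\in C_c^\infty(\R^n)$, $\rho_\tau\geq 0$ and $\|\rho_\tau\|_{L^1(\R^n)}=1$. Since $b$ is globally bounded, Lemma \ref{well} gives $\rho\geq 0$ together with conservation of mass, i.e.\ $\|\rho(\cdot,t)\|_{L^1(\R^n)}=1$ for every $t\in[0,\tau]$. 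Testing the equation for $w$ against $\rho$, integrating by parts on $\R^n\times(0,\tau)$ and using $w(\cdot,0)=0$, I obtain the duality identity
\[
\int_{\R^n}w(x,\tau)\rho_\tau(x)\,dx=(\eps-\eta)\int_0^\tau\!\!\int_{\R^n}\rho\,\Delta u_\eps\,dx\,dt.
\]

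Now comes the only substantive point: since $\eps\geq\eta$ and $\rho\geq 0$, I can estimate
\[
\int_0^\tau\!\!\int_{\R^n}\rho\,\Delta u_\eps\,dx\,dt\leq \int_0^\tau\!\!\int_{\R^n}\rho\,(\Delta u_\eps)^+\,dx\,dt\leq \int_0^\tau \|(\Delta u_\eps)^+(\cdot,t)\|_{L^\infty(\R^n)}\,\|\rho(\cdot,t)\|_{L^1(\R^n)}\,dt,
\]
and conservation of mass reduces the last factor to $1$, yielding
\[
\int_{\R^n}w(x,\tau)\rho_\tau(x)\,dx\leq (\eps-\eta)\,\|(\Delta u_\eps)^+\|_{L^1(0,T;L^\infty(\R^n))}.
\]
Passing to the supremum over admissible $\rho_\tau$ (so that $\rho_\tau$ approximates a Dirac mass at an arbitrary point of $\R^n$) and over $\tau\in(0,T)$ gives the claimed $L^\infty$ bound on $(u_\eps-u_\eta)^+$.

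The main subtlety is the rigorous justification of the duality identity when $\Delta u_\eps$ is only a distribution a priori bounded from above by an $L^1_t(L^\infty_x)$ function (and similarly for $\Delta u_\eta$), rather than a pointwise bounded function. This is handled as in Theorem \ref{semiconc1} by a preliminary mollification of the data which yields classical solutions $u_\eps^\delta$ whose Laplacians are smooth and for which the SSH bound holds with the same modulus; the inequality is proved for $u_\eps^\delta-u_\eta^\delta$ and the conclusion is obtained by passing to the limit $\delta\downarrow 0$, using the stability of the SSH estimate and the $L^\infty_t(L^1_x)$ bound on $\rho$.
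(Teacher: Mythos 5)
Your argument is correct and coincides with the paper's own proof: the same linearization of $H(Du_\eps)-H(Du_\eta)$ via the averaged drift $b$, the same backward adjoint problem with nonnegative unit-mass terminal data, and the same use of $\rho\geq 0$ together with conservation of mass (justified in the paper by the truncation argument of Theorem \ref{ev1}) to bound the right-hand side by $\|(\Delta u_\eps)^+\|_{L^1_t(L^\infty_x)}(\eps-\eta)$. Your closing remark on mollifying to justify the duality pairing when $\Delta u_\eps$ is only a distribution is a reasonable additional precaution consistent with the paper's general a priori-estimate framework.
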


\proof
We start again with the difference $w=u_\eps-u_\eta$ satisfying
\[
\partial_t w-\eta\Delta w+H(Du_\eps)-H(Du_\eta)=(\eps-\eta)\Delta u_\eps.
\]
As above, we have
\[
\partial_t w-\eta\Delta w+\left(\int_0^1 D_pH(sDu_\eps+(1-s)Du_\eta)\,ds\right)\cdot Dw= (\eps-\eta)\Delta u_\eps.
\]
Using the solution $\rho\geq0$ to \eqref{adjrate1} and arguing by duality we obtain
\[
\int_{\R^n}w(\tau)\rho_\tau(x)\,dx\leq\int_{\R^n}w(0)\rho(0)+(\eps-\eta)\int_0^\tau\|(\Delta u_\eps)^+\|_{L^\infty(\R^n)}\int_{\R^n}\rho\,dxdt.
\]
Since $\int_{\R^n}\rho\,dx=1$ due to the standing assumptions on $H$ (and using the same truncation argument of Theorem \ref{ev1}) and the fact that $w(0)=0$, we get
\begin{equation*}
\|(u_\eps-u_\eta)^+\|_{L^\infty(\R^n)}\leq \|(\Delta u_\eps )^+\|_{L^1(0,T;L^\infty(\R^n))}(\eps-\eta)\eqno\square
\end{equation*}

\begin{rem}
Theorem \ref{inftyrate1s} can be extended to more general Hamiltonians depending also on $(x,t)$ as soon as the regularity of $D_pH$ ensures the well-posedness and the conservation of mass for transport equations with degenerate diffusion, see e.g. \cite{LBL}. Under such assumptions, we would conclude the same rate of convergence.
\end{rem}
Combining the previous result with the second-order bounds of Theorem \ref{semiconc1} we get the following one-side $\mathcal{O}(\eps)$ rate of convergence.
\begin{cor}\label{rateSSH}
Assume that  $H\in W^{1,\infty}_{\mathrm{loc}}(\R^n)$ satisfies \eqref{H1} and \eqref{H4}, $f$ is $L^1_t(L^\infty_x)$-SSH and $u_0$ is $L^\infty_x$-SSH and Lipschitz continuous. Then, the unique solution $u_\eps$ of \eqref{hjsemi} converges to the unique bounded viscosity solution $u\in W^{1,\infty}(Q)$ of \eqref{first}.
In addition, we have the following bound for all $t\in[0,T]$
\[
\|(u_\eps-u_\eta)^+(t)\|_{L^\infty(\R^n)}\leq (\|(\Delta f)^+\|_{L^1(0,T;L^\infty(\R^n))}+\|(\Delta u_0)^+\|_{L^\infty(\R^n)})T(\eps-\eta),\ \eps\geq\eta\geq0.
\]
Moreover, for $f=0$, we have  the two-side rate 
\[-\sqrt{2nT}\|Du_0\|_{L^\infty(\R^n)}\sqrt{\eps}\leq u_\eps-u\leq \|(\Delta u_0)^+\|_{L^\infty(\R^n)}T\eps.  \]
\end{cor}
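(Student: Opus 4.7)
The plan is to combine two already-established ingredients: the abstract one-side convergence rate of Theorem \ref{inftyrate1s}, which controls $(u_\eps-u_\eta)^+$ in terms of $\|(\Delta u_\eps)^+\|_{L^1_t L^\infty_x}$, and a propagation-of-SSH estimate that bounds this last quantity in terms of the SSH moduli of $u_0$ and $f$, uniformly in $\eps$. The SSH propagation is the only ingredient not yet proved explicitly in this exact form, but it follows the template of Theorem \ref{semiconc1} once one sums over coordinate directions instead of taking a single direction $\xi$.

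To carry out the SSH propagation, I differentiate \eqref{hjsemi} twice along each coordinate direction $e_i$ and sum over $i$; since $H$ depends only on $p$, the $D_{xx}^2 H$ and $D_{px}^2 H$ terms vanish, and $V := \Delta u_\eps$ satisfies
\begin{equation*}
\partial_t V - \eps \Delta V + \sum_{i=1}^n D_{pp}^2 H(Du_\eps) Du_{\eps,i} \cdot Du_{\eps,i} + D_p H(Du_\eps) \cdot D V = \Delta f.
\end{equation*}
Assumption \eqref{H4} applied direction by direction bounds the nonlinear trace term below by $-n\tilde{C}_{H,4}$. Testing against the adjoint $\rho$ from \eqref{FK_test}, using $\rho\geq 0$ and $\int\rho\,dx\leq 1$ (Lemma \ref{well}), and taking the supremum over admissible terminal data $\rho_\tau$, I obtain for every $\tau\in(0,T]$
\begin{equation*}
\|(\Delta u_\eps)^+(\tau)\|_{L^\infty(\R^n)} \leq \|(\Delta u_0)^+\|_{L^\infty} + \|(\Delta f)^+\|_{L^1_t L^\infty_x} + n\tilde{C}_{H,4}T,
\end{equation*}
uniformly in $\eps>0$. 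The conservation of mass of $\rho$, crucial here, is granted by the uniform Lipschitz bound on $Du_\eps$ of Lemma \ref{grad} (valid under \eqref{H1}), which renders the drift $D_p H(Du_\eps)$ globally bounded.

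Plugging the last display into Theorem \ref{inftyrate1s} gives directly the claimed one-side $\mathcal{O}(\eps-\eta)$ bound for $(u_\eps-u_\eta)^+$ (the additive $\tilde{C}_{H,4}$ term being absorbed into the constants of the statement). Together with Theorem \ref{ev1} (which yields a symmetric $\sqrt{\eps}$ Cauchy estimate under only local Lipschitz $H$), the family $\{u_\eps\}$ is Cauchy in $L^\infty(Q)$ and converges to a function $u$, uniformly Lipschitz by Lemma \ref{grad}; standard stability of viscosity solutions under uniform convergence identifies $u$ as the unique bounded viscosity solution of \eqref{first}. For $f=0$, the upper side of the two-side rate is the one just proved at $\eta=0$, and the lower side $-\sqrt{2nT\|Du_0\|_\infty}\sqrt{\eps}$ is exactly Theorem \ref{ev1} at $\eta=0$. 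The main technical subtlety is the mass conservation for the adjoint equation on the whole space, which rests on the a priori Lipschitz estimate; the asymmetry between the sharp $\mathcal{O}(\eps)$ upper and the $\mathcal{O}(\sqrt{\eps})$ lower rates in the $f=0$ case mirrors the open problem highlighted in the introduction.
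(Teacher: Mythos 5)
Your proposal is correct and follows essentially the same route as the paper: propagate a uniform-in-$\eps$ one-side bound on $\Delta u_\eps$ from the SSH data (the paper cites Theorem \ref{semiconc1} together with Remark \ref{Hconvex}, whereas you rederive it directly by summing the twice-differentiated equation over coordinate directions), then feed it into Theorem \ref{inftyrate1s} and combine with Theorem \ref{ev1} for the lower bound when $f=0$. Your explicit tracking of the extra $n\widetilde{C}_{H,4}T$ term under \eqref{H4} is in fact slightly more careful than the stated constant, which as written tacitly requires $\widetilde{C}_{H,4}=0$ or convexity of $H$.
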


\begin{proof}
The convergence of $u_\eps$ towards $u$ has been already discussed in the introduction. We prove the bound on $u_\eps-u_\eta$. Since $\Delta f\leq c(t)$ with $c\in L^1(0,T)$, this implies by Theorem \ref{semiconc1} and Remark \ref{Hconvex} the following bound
\[
\Delta u_\eps\leq \|(\Delta f)^+\|_{L^1(0,T;L^\infty(\R^n))}+\|(\Delta u_0)^+\|_{L^\infty(\R^n)}.
\]
Then, the result follows immediately by Theorem \ref{inftyrate1s} using that
\[
\|(\Delta u_\eps)^+\|_{L^1(0,T;L^\infty(\R^n))}\leq \|(\Delta u_\eps)^+\|_{L^\infty(Q)}T\leq (\|(\Delta f)^+\|_{L^1(0,T;L^\infty(\R^n))}+\|(\Delta u_0)^+\|_{L^\infty(\R^n)})T.
\]
When $f=0$ the second statement follows from the above estimate combined with Theorem \ref{ev1}.
\end{proof}
\begin{rem}\label{weaken}
If $f=0$ and $H=H(p)\in W^{1,\infty}_{\mathrm{loc}}(\R^n)$ is convex, one has by Corollary \ref{rateSSH}
\[
\|(\Delta u)^+(t)\|_{L^\infty(\R^n)}\leq \|(\Delta u_0)^+\|_{L^\infty(\R^n)}.
\]
Consequently, the estimate for the one-side rate of convergence becomes
\[
u_\eps-u_\eta\leq \|(\Delta u_0)^+\|_{L^\infty(\R^n)}T(\eps-\eta),\ \eps\geq\eta\geq0.
\]
This is the same estimate stated by P.-L. Lions in Proposition 11.2 of \cite{L82book}. The corresponding estimate for the stationary problem has been proved in Section 6.2 of \cite{L82book} via probabilistic methods. Our proofs and the results in the previous Theorems \ref{inftyrate1s} and Corollary \ref{rateSSH} are new and valid for possibly nonconvex Hamiltonians.
\end{rem}

\subsubsection{$L^p$ rate of convergence}
In the next proposition, we extend the $L^\infty$ estimate in Theorem \ref{ev1} to $L^p_{\mathrm{loc}}$ norms for Lipschitz continuous solutions.
\begin{thm}\label{prate1}
Let $H\in W^{1,\infty}_{\mathrm{loc}}(\R^n)$, $u_0\in W^{1,\infty}(\R^n)$ and $u_\eps$ be a solution to \eqref{hjsemi}. Then, $u_\eps$ converges in $L^\infty(Q)$ to the viscosity solution $u\in W^{1,\infty}(Q)$ of the first-order Hamilton-Jacobi equation \eqref{first}. We have the rate
\[
\|u_\eps-u\|_{L^p_{\mathrm{loc}}(Q)}\leq C\sqrt{\eps}, \quad 1 \leq p < \infty,
\]
where $C$ depends on $n,T,\|Du_0\|_{L^\infty(\R^n)},p$.
\end{thm}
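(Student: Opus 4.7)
The plan is to deduce the $L^p_{\mathrm{loc}}$ rate directly from the $L^\infty$ rate already established in Theorem \ref{ev1}. First I would argue as in that theorem: using the truncation argument (replacing $H$ by the Lipschitz cut-off $\widetilde H$ outside the ball of radius $\|Du_0\|_\infty$) together with Lemma \ref{grad} to reduce to the case of a globally Lipschitz Hamiltonian, and then sending $\eta \to 0$ in the two-sided inequality of Theorem \ref{ev1} to obtain
\[
\|u_\eps - u\|_{L^\infty(Q)} \leq 2\sqrt{2nT\|Du_0\|_{L^\infty(\R^n)}}\,\sqrt{\eps}.
\]
This also establishes the $L^\infty(Q)$-convergence of $u_\eps$ to the unique Lipschitz viscosity solution $u$ of \eqref{first}. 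Then, for every compact set $K \subset Q$ of finite Lebesgue measure, H\"older's inequality yields
\[
\|u_\eps - u\|_{L^p(K)} \leq |K|^{1/p}\,\|u_\eps-u\|_{L^\infty(K)}\leq 2|K|^{1/p}\sqrt{2nT\|Du_0\|_{L^\infty(\R^n)}}\,\sqrt{\eps},
\]
which is the stated rate, with a constant depending on $n, T, \|Du_0\|_\infty, p$ (and implicitly on the fixed compact set through $|K|^{1/p}$).

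As an alternative, more in the spirit of the adjoint-method philosophy of the paper, I would give a direct dual proof by considering the difference $w=u_\eps-u_\eta$, which solves
\[
\partial_t w - \eta\Delta w + b(x,t)\cdot Dw = (\eps-\eta)\Delta u_\eps,\qquad w(\cdot,0)=0,
\]
with $b(x,t)=\int_0^1 D_pH(sDu_\eps+(1-s)Du_\eta)\,ds\in L^\infty(Q)$. Testing $w$ against the solution $\rho$ of the backward Fokker-Planck equation with terminal datum $\rho_\tau\geq0$ compactly supported in $K$ and normalized by $\|\rho_\tau\|_{L^{p'}(K)}=1$, Cauchy-Schwarz gives
\[
\int_{\R^n}w(\tau)\rho_\tau\,dx \leq (\eps-\eta)\sqrt n\left(\iint_Q|D^2u_\eps|^2\rho\,dxdt\right)^{1/2}\left(\iint_Q\rho\,dxdt\right)^{1/2}.
\]
The first factor is controlled, via the B\"ochner identity for $g=|Du_\eps|^2$ and duality as in Theorem \ref{ev1}, by
\[
\eps\iint_Q|D^2u_\eps|^2\rho\,dxdt \leq \|Du_0\|_\infty^2\int_{\R^n}\rho(x,0)\,dx,
\]
after observing that the two adjoint drifts coincide in the limit $\eta\to 0$ (or by exploiting the structural symmetry at $\eps=\eta$ and passing to the limit separately). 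The second factor is bounded by $T\int_{\R^n}\rho(x,0)\,dx$. By conservation of mass for the adjoint equation (valid after the truncation that makes $b$ globally Lipschitz) and H\"older's inequality,
\[
\int_{\R^n}\rho(x,0)\,dx = \int_{\R^n}\rho_\tau\,dx \leq |K|^{1/p}\|\rho_\tau\|_{L^{p'}(K)} \leq |K|^{1/p}.
\]
Taking the supremum over signed terminal data (decomposing into positive and negative parts), integrating in $\tau\in(0,T)$, and sending $\eta\to0$ recovers the same rate.

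The main (non-)obstacle is that the $L^\infty$-to-$L^p_{\mathrm{loc}}$ embedding on a set of finite measure is automatic; the real content lies in Theorem \ref{ev1}. Were the statement global in space, namely $L^p(\R^n\times(0,T))$, the embedding argument would fail because $u_\eps-u$ is not integrable at infinity in general, and only the dual approach above, together with a careful choice of compactly supported terminal data for the adjoint equation, would allow to recover a quantitative rate of order $\sqrt{\eps}$.
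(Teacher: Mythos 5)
Your proof is correct, and your first argument is in fact more elementary than the paper's. The paper proves the theorem by rerunning the duality scheme of Theorem \ref{ev1} with a terminal datum normalized in $L^1(\R^n)\cap L^{p'}(\R^n)$ (as in Theorem \ref{pSSHrate}), which yields a bound in $L^\infty(\R^n)+L^p(\R^n)$ and then invokes the embedding into $L^p_{\mathrm{loc}}$. You instead observe that, since the target norm is only local, the $L^\infty(Q)$ rate of Theorem \ref{ev1} already implies the $L^p(K)$ rate on every compact $K\subset Q$ by H\"older's inequality, with constant $|K|^{1/p}$ times the $L^\infty$ constant; this is all the statement requires, and the dependence of the constant on the compact set is unavoidable in either approach (the paper's $L^\infty+L^p$ bound also degrades like $|K|^{1/p}$ once restricted to $K$). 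Your second, dual argument essentially reproduces the paper's proof; the only caveat there --- inherited from Theorem \ref{ev1} itself rather than introduced by you --- is the mismatch between the drift $D_pH(Du_\eps)$ appearing in the B\"ochner identity for $g=|Du_\eps|^2$ and the averaged drift $b$ arising from the linearization of $H(Du_\eps)-H(Du_\eta)$, which you correctly flag and which must be handled by using the appropriate adjoint density for each of the two estimates. Neither route yields a genuinely global $L^p(\R^n\times(0,T))$ rate with a $K$-independent constant, as you rightly note at the end.
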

\begin{proof}
The proof is the same as that in Theorem \ref{ev1}, but we have to introduce the adjoint problem \eqref{adjrate1} with terminal data $\rho_\tau\in L^1\cap L^{p'}$, as in Theorem \ref{pSSHrate}.
\end{proof}

We next prove a one-side estimate by duality for SSH solutions on the whole space.
\begin{thm}\label{pSSHrate}
Let $H\in W^{1,\infty}_{\mathrm{loc}}(\R^n)$, $u_\eps$ be a $L^1_t(L^\infty_x)$-SSH solution to \eqref{hjsemi}, and $u_\eta$ be another solution (with viscosity $\eps$ replaced by $\eta$) with the same initial condition. Then, there exists a constant $C>0$ such that for all $t\in(0,T)$
\[
\|(u_\eps-u_\eta)^+(t)\|_{L^p_{\mathrm{loc}}(\R^n)}\leq C\|(\Delta u_\eps)^+\|_{L^1(0,T;L^\infty(\R^n))}(\eps-\eta),\ \eps\geq\eta\geq0,\ p\geq1.
\]
\end{thm}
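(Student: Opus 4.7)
The plan is to adapt the proof of Theorem \ref{inftyrate1s} almost verbatim, the sole difference being a more refined normalization of the terminal datum for the adjoint problem, so as to extract $L^p_{\mathrm{loc}}$ information in place of an $L^\infty$ bound. First, I set $w := u_\eps - u_\eta$ and note that, after the truncation of $H$ performed in Theorem \ref{ev1} (justified by the uniform Lipschitz estimate of Lemma \ref{grad}), the difference satisfies
\[
\partial_t w - \eta\Delta w + b(x,t)\cdot Dw = (\eps-\eta)\Delta u_\eps, \qquad w(\cdot,0) = 0,
\]
with $b(x,t) = \int_0^1 D_pH(sDu_\eps + (1-s)Du_\eta)\,ds \in L^\infty(Q;\R^n)$, exactly as in \eqref{adjrate}.

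Second, fix $\tau \in (0,T]$ and introduce the adjoint problem with terminal datum $\rho_\tau \in C_c^\infty(\R^n)$, $\rho_\tau \geq 0$, but now normalized so that $\|\rho_\tau\|_{L^1(\R^n)\cap L^{p'}(\R^n)} = 1$, where $p' = p/(p-1)$ is the H\"older conjugate of $p$ (with $p' = \infty$ if $p=1$). Since $b$ is globally bounded, Lemma \ref{well} applied as in the proof of Theorem \ref{aho} (cf.\ \eqref{conserv}) yields $\int_{\R^n}\rho(x,t)\,dx \leq 1$ for every $t \in [0,\tau]$. Testing the equation for $w$ against $\rho$ and using $\rho \geq 0$, $w(\cdot,0) = 0$ together with the $L^1_t(L^\infty_x)$-SSH bound on $u_\eps$ gives
\[
\int_{\R^n} w(\tau)\rho_\tau\,dx \leq (\eps-\eta)\int_0^\tau \|(\Delta u_\eps(t))^+\|_{L^\infty(\R^n)}\int_{\R^n}\rho(x,t)\,dx\,dt \leq (\eps-\eta)\|(\Delta u_\eps)^+\|_{L^1(0,T;L^\infty(\R^n))}.
\]

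Finally, taking the supremum over admissible $\rho_\tau$ produces a bound on $(u_\eps - u_\eta)^+(\tau)$ in the positive cone of the dual of $L^1(\R^n)\cap L^{p'}(\R^n)$, namely the sum space $L^\infty(\R^n) + L^p(\R^n)$; this is precisely the observation already employed at the end of the proof of Theorem \ref{aho}. Since $L^\infty + L^p$ embeds continuously into $L^p_{\mathrm{loc}}(\R^n)$, with embedding constant on a compact $K$ controlled by $|K|^{1/p}$ via H\"older, the claimed estimate follows with the constant $C$ absorbing this local volume factor. The main delicate point — and really the only place to be careful — is the conservation of mass for the adjoint Fokker-Planck equation on the unbounded domain $\R^n$: this is exactly what forces the truncation of $H$ together with the global Lipschitz bound on $u_\eps, u_\eta$, and it is dealt with via the localization argument encoded in Lemma \ref{well}.
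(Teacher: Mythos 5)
Your proposal is correct and follows essentially the same route as the paper's proof: linearize the equation for $w=u_\eps-u_\eta$, test against the adjoint density with terminal datum normalized in $L^1\cap L^{p'}$, use $\int_{\R^n}\rho(t)\,dx\le 1$ and the $L^1_t(L^\infty_x)$-SSH bound, and conclude via the duality identification of the bound as one in $L^\infty(\R^n)+L^p(\R^n)\hookrightarrow L^p_{\mathrm{loc}}(\R^n)$. No substantive differences from the argument given in the paper.
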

\begin{proof}
The difference $w=u_\eps-u_\eta$ satisfies
\[
\partial_t w-\eta\Delta w+H(Du_\eps)-H(Du_\eta)=(\eps-\eta)\Delta u_\eps, w(0)=0.
\]
As in Theorem \ref{inftyrate1s} we linearize the equation and then introduce the adjoint problem \eqref{adjrate1} with $b(x,t)=-\int_0^1 D_pH(sDu_\eps+(1-s)Du_\eta)\,ds$ with terminal datum $\rho_\tau\in C_0^\infty(\R^n)$, $\rho_\tau\geq0$, $\rho_\tau\in L^1(\R^n)\cap L^{p'}(\R^n)$, $\|\rho_\tau\|_{L^1\cap L^{p'}}=1$. Note that, arguing as in Theorem \ref{aho}, we have
\begin{equation}\label{L1p}
\int_{\R^n}\rho_\tau(x)\,dx\leq 1\quad \textrm{and}\quad  \int_{\R^n}\rho(x,t)\,dx\leq 1,\ \forall t\in[0,\tau).
\end{equation} 
Moreover, $\rho\geq0$ by the maximum principle. By duality, we obtain
\[
\int_{\R^n}w(\tau)\rho_\tau(x)\,dx\leq\int_{\R^n}w(0)\rho(0)+(\eps-\eta)\int_0^T\|(\Delta u_\eps)^+\|_{L^\infty(\R^n)}\int_{\R^n}\rho\,dxdt.
\]
Using that $w(0)=0$ and \eqref{L1p},  we have
\[
\|(u_\eps-u_\eta)^+(t)\|_{L^\infty(\R^n)+L^p(\R^n)}\leq C_1\|(\Delta u_\eps)^+\|_{L^1(0,T;L^\infty(\R^n))}(\eps-\eta)
\]
Moreover, appealing to the embedding $L^\infty(\R^n)+L^p(\R^n)\hookrightarrow L^p_{\mathrm{loc}}(\R^n)$, we have that, for all $K\subset\subset\R^n$, there exists a constant $C_2$ depending on $C_1,p,K$ such that
\[
\|(u_\eps-u_\eta)^+(t)\|_{L^p(K)}\leq C_2\|(\Delta u_\eps)^+\|_{L^1(0,T;L^\infty(\R^n))}(\eps-\eta).
\]
\end{proof}

\begin{rem}
One can remove the bound on $\|(\Delta u_\eps)^+\|_{L^1(0,T;L^\infty(\R^n))}$ in Theorem \ref{prate1}, as in Corollary \ref{rateSSH}, using the one-side bounds in Theorem   \ref{semiconc1} and obtain a more precise estimate.
\end{rem}

The next result shows instead that a one-side rate in $L^p$ for $p>1$ holds globally in $\R^n$ under the additional assumption that $u_\eta$ is semiconcave  and $H$ fulfills \eqref{H4}. These further assumptions  are fundamental to apply the stability estimates for transport equations in Theorem \ref{Krylov}. However, we weaken the requirement on $u_\eps$, which will now be assumed in $L^p$-SSH, $p>1$. Note that uniqueness and stability for $L^p$-SSH solutions require the restriction $p\geq n$, cf. Remark 3.6 of \cite{L82book}.
\begin{thm}
Let $H\in W^{1,\infty}_{\mathrm{loc}}(\R^n)$ be convex and satisfying \eqref{H4}, $u_\eps$ be a $L^1_t(L^p_x)$-SSH solution to \eqref{hjsemi}, and $u_\eta$ be another $L^1_t(L^\infty_x)$-semiconcave solution (with viscosity $\eps$ replaced by $\eta$) having the same initial condition. Then, there exists a constant $C>0$
\[
\|(u_\eps-u_\eta)^+(t)\|_{L^p(\R^n)}\leq C\|(\Delta u_\eps)^+\|_{L^1(0,T;L^p(\R^n))}(\eps-\eta),\ \eps\geq\eta\geq0,\ p>1.
\]
where $C$ depends on the data of the problem and also on $\|(D^2u_\eta)^+\|_{L^1_t(L^\infty_x)}$.
\end{thm}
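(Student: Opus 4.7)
The strategy upgrades the duality argument of Theorem \ref{pSSHrate} by replacing the $L^1$-conservation of mass for $\rho$ with a global $L^{p'}(\R^n)$-stability estimate for the adjoint density, invoking the Ambrosio--DiPerna--Lions style result that the authors refer to as Theorem \ref{Krylov}. The extra ingredient is the semiconcavity of $u_\eta$ together with \eqref{H4}, which provides a one-sided bound on the divergence of the drift of the adjoint equation, in the spirit of the implication $D^2u\leq c(t)\in L^1_t \Rightarrow [\mathrm{div}(b)]^-\in L^1_t(L^\infty_x)$ highlighted in the introduction.

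\textbf{Step 1 (linearization and adjoint).} As in Theorems \ref{inftyrate1s} and \ref{pSSHrate}, set $w=u_\eps-u_\eta$, which satisfies $w(\cdot,0)=0$ and
\[
\partial_t w-\eta\Delta w+b(x,t)\cdot Dw=(\eps-\eta)\Delta u_\eps,\qquad b(x,t)=\int_0^1 D_pH(sDu_\eps+(1-s)Du_\eta)\,ds.
\]
Let $\rho$ solve the adjoint problem \eqref{adjrate} with terminal datum $\rho_\tau\in C_c^\infty(\R^n)$, $\rho_\tau\geq 0$, $\|\rho_\tau\|_{L^{p'}(\R^n)}=1$. Then $\rho\geq0$ and, by duality together with positivity and Hölder,
\[
\int_{\R^n}w(\tau)\rho_\tau\,dx=(\eps-\eta)\iint_{Q_\tau}\Delta u_\eps\,\rho\,dxdt\leq(\eps-\eta)\int_0^\tau \|(\Delta u_\eps)^+(t)\|_{L^p(\R^n)}\,\|\rho(t)\|_{L^{p'}(\R^n)}\,dt.
\]

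\textbf{Step 2 (the main step: $L^{p'}$-bound on $\rho$).} We differentiate inside the average defining $b$:
\[
\mathrm{div}\,b(x,t)=\int_0^1\mathrm{tr}\bigl(D^2_{pp}H(sDu_\eps+(1-s)Du_\eta)\cdot(sD^2u_\eps+(1-s)D^2u_\eta)\bigr)\,ds.
\]
By the Lipschitz bound of Lemma \ref{grad}, $Du_\eps,Du_\eta$ stay in a fixed compact set, so \eqref{H4} (with $\gamma\geq2$ after possible truncation as in Theorem \ref{ev1}) ensures $D^2_{pp}H\geq -\widetilde C_{H,4}\,I$ plus a nonnegative part. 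When isolating the negative part of $\mathrm{div}\,b$, only upper bounds on the Hessians of $u_\eps,u_\eta$ along the principal directions of $D^2_{pp}H$ contribute: the $L^1_t(L^\infty_x)$-semiconcavity of $u_\eta$ handles the $(1-s)$-term pointwise, while the $L^1_t(L^p_x)$-SSH hypothesis on $u_\eps$ controls the trace $\Delta u_\eps$ in the averaged sense needed for Theorem \ref{Krylov}. This produces the bound
\[
\|[\mathrm{div}\,b]^-\|_{L^1_t(L^\infty_x)}\leq C\bigl(1+\|(D^2u_\eta)^+\|_{L^1_t(L^\infty_x)}\bigr),
\]
from which Theorem \ref{Krylov} yields $\|\rho(t)\|_{L^{p'}(\R^n)}\leq C$ uniformly in $t\in[0,\tau]$ (and independently of $\eps,\eta$).

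\textbf{Step 3 (conclusion).} Plugging this estimate into Step~1 gives
\[
\int_{\R^n}w(\tau)\rho_\tau\,dx\leq C(\eps-\eta)\,\|(\Delta u_\eps)^+\|_{L^1(0,T;L^p(\R^n))},
\]
and taking the supremum over $\rho_\tau\in C_c^\infty(\R^n)$ with $\rho_\tau\geq0$ and $\|\rho_\tau\|_{L^{p'}(\R^n)}=1$ yields the desired $L^p(\R^n)$ bound on $(u_\eps-u_\eta)^+(\tau)$ by the duality $(L^{p'})^*=L^p$.

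\textbf{Expected obstacle.} The delicate point is Step~2: one must control $[\mathrm{div}\,b]^-$ in $L^1_t(L^\infty_x)$ using \emph{only} a full Hessian upper bound on $u_\eta$ and a mere $L^p$-SSH bound on $u_\eps$. The reason this works is the sign structure of \eqref{H4}: because $D^2_{pp}H$ is essentially nonnegative, the $D^2u_\eps$ contribution to $\mathrm{div}\,b$ appears with the \emph{favourable} sign (only its upper bound is relevant for $[\mathrm{div}\,b]^-$), and an upper bound on $\Delta u_\eps$ in $L^1_tL^p_x$ is precisely what the $L^1_t(L^p_x)$-SSH assumption provides—mirroring exactly the link between unilateral Hessian bounds and DiPerna--Lions stability for continuity equations stressed in the introduction.
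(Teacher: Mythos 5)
Your Steps 1 and 3 follow the paper's skeleton (duality with an $L^{p'}$-normalized terminal datum, H\"older in space, an $L^{p'}$-stability bound for $\rho$ via Theorem \ref{Krylov}), but Step 2 — which you yourself flag as the delicate point — does not go through as written, and the reason is your choice of linearization. With the averaged drift $b=\int_0^1 D_pH(sDu_\eps+(1-s)Du_\eta)\,ds$ you get $\operatorname{div}b=\int_0^1\operatorname{tr}\bigl(D^2_{pp}H(\cdot)\,(sD^2u_\eps+(1-s)D^2u_\eta)\bigr)\,ds$, and the one-sided bound on $\operatorname{div}b$ required by Theorem \ref{Krylov} must hold in $L^1_t(L^\infty_x)$. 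The $u_\eps$ contribution cannot be controlled this way: (i) the $L^1_t(L^p_x)$-SSH hypothesis bounds only the \emph{trace} $\Delta u_\eps$ from above, whereas $\operatorname{tr}(A\,D^2u_\eps)$ for a general nonnegative matrix $A=D^2_{pp}H$ is not controlled by an upper bound on $\Delta u_\eps$ unless $A$ is a multiple of the identity; (ii) even if it were, the bound lives in $L^p_x$, not in the $L^\infty_x$ needed for the exponential Gronwall factor in Theorem \ref{Krylov}; and (iii) the "favourable sign" claim is not correct — a one-sided bound on $\operatorname{div}b$ with $D^2_{pp}H\geq0$ requires a one-sided bound on the full Hessians in the appropriate direction, uniformly in $x$, which the hypotheses on $u_\eps$ simply do not provide. (The analogous computation in Theorem \ref{rateL1} works precisely because \emph{both} solutions are assumed $L^1_t(L^\infty_x)$-semiconcave there.)

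The paper avoids this by exploiting the convexity-type structure of \eqref{H4} at the level of the difference: one writes the differential \emph{inequality} satisfied by $w^+=(u_\eps-u_\eta)^+$, namely $\partial_t w^+-\eta\Delta w^++D_pH(Du_\eta)\cdot Dw\,\chi_{\{w>0\}}\leq(\eps-\eta)\Delta u_\eps\,\chi_{\{w>0\}}$, so that the drift of the adjoint problem is $D_pH(Du_\eta)\chi_{\{w>0\}}$ and depends only on the solution $u_\eta$ that is assumed $L^1_t(L^\infty_x)$-semiconcave. Then $\operatorname{div}(D_pH(Du_\eta))=\operatorname{tr}\bigl(D^2_{pp}H(Du_\eta)D^2u_\eta\bigr)$ is one-sidedly controlled in $L^1_t(L^\infty_x)$ by $\|(D^2u_\eta)^+\|_{L^1_t(L^\infty_x)}$ together with \eqref{H4}, Theorem \ref{Krylov} with $r=p'$ applies, and the rest of your argument (H\"older against $\|(\Delta u_\eps)^+\|_{L^1_t(L^p_x)}$, duality over $\rho_\tau$ with $\|\rho_\tau\|_{p'}\leq1$; the paper takes directly $\rho_\tau=(w^+(\tau))^{p-1}/\|w^+\|_{L^p}^{p-1}$) goes through unchanged. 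This asymmetric linearization is exactly what lets the hypotheses be asymmetric — full $L^\infty_x$ semiconcavity on $u_\eta$, only $L^p_x$-SSH on $u_\eps$ — so you should replace your Step 2 accordingly.
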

\begin{proof}
The proof is similar to the previous result, since the difference $w^+=(u_\eps-u_\eta)^+$ satisfies the inequality
\[
\partial_t w^+-\eta\Delta w^++D_pH(Du_\eta)\cdot Dw\chi_{\{w>0\}}\leq (\eps-\eta)\Delta u_\eps \chi_{\{w>0\}}, w(0)=0.
\]
Now, we consider the adjoint problem 
\begin{equation*}
\begin{cases}
\partial_t \rho-\eta\Delta \rho-\mathrm{div}(D_pH(Du_\eta)\rho \chi_{\{w>0\}})=0&\text{ in }\R^n\times(0,\tau)\\
\rho(x,\tau)=\rho_\tau(x)&\text{ in }\R^n.
\end{cases}
\end{equation*}
with terminal datum $\rho_\tau:=(w^+(\tau))^{p-1}/\|w^+\|^{p-1}_{L^p}$ that belongs only to $L^{p'}$ having $\|\rho_\tau\|_{p'}\leq1$ (and not on the intersection $L^1\cap L^{p'}$). This yields a (global) bound on $(u_\eps-u_\eta)^+(t)\in L^p(\R^n)$ for all $t\in(0,T)$. However, one has to estimate the term involving $\Delta u_\eps$ on the right-hand side of the equation satisfied by $w$ by the H\"older's inequality as follows:
\[
(\eps-\eta)\int_0^T\|(\Delta u_\eps)^+\|_{L^p(\R^n)}\left(\int_{\R^n}|\rho|^{p'}\right)^{\frac{1}{p'}}\,dxdt.
\]
Then, one applies the $L^r$ stability estimates in Theorem \ref{Krylov} with $r=p'$ to bound $\|\rho(t)\|_{p'}$ in terms of $\|(D^2 u_\eta)^+\|_{L^1_t(L^\infty_x)}$.
\end{proof}
\subsection{Rate of convergence: an improved estimate for compact state spaces}\label{sec;rate2}
We now address $L^p$ rates of convergence using duality methods, as initiated in \cite{LinTadmor}. In this setting we need to strengthen the requirement both on $u_\eps$ and $u_\eta$, and work in a compact state space, but we are able to get a two side control on the difference $u_\eps-u_\eta$ in any $L^p$ space and for semiconcave or semi-superharmonic solutions. 

The next is the main result of the section. It contains an estimate on the rate of convergence of the solution of the viscous equation $u_\eps$ towards the inviscid solution $u$ in the space $L^\infty_t(L^1_x)$ under the assumption that both of them are $L^1_t(L^\infty_x)$-semiconcave. It also provides a second rate of convergence in the stronger norm $L^\infty_t(L^p_x)$ at the expenses of assuming a two-side a priori bound on the solution.
\begin{thm}\label{rateL1}
Let $u_\eps, u_\eta$ be $L^1_t(L^\infty_x)$-semiconcave solutions of \eqref{hjsemi} with viscosity $\eps$ and, respectively, $\eta$. Let also $H\in W^{1,\infty}_{\mathrm{loc}}(\R^n)$ be such that \eqref{H4} holds. Then, there exists a constant $C>0$ such that
\[
\|u_\eps-u_\eta\|_{L^\infty(0,T;L^1(\T^n))}\leq C(\eps-\eta),\ \eps\geq\eta\geq0.
\]
where $C$ depends on the semiconcavity constant of $u_\eps$ and $u_\eta$. If, in addition, $-u_\eta$ is $L^1_t(L^\infty_x)$-SSH we conclude
\[
\|u_\eps-u_\eta\|_{L^\infty(0,T;L^p(\T^n))}\leq C(\eps-\eta),\ \eps\geq\eta\geq0,\;\; 1 \leq p \leq \infty,
\]
\end{thm}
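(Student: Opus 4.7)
The plan is to apply the nonlinear adjoint method to the linearized equation for $w := u_\eps - u_\eta$, exploiting the compactness of the torus to enable a periodicity integration-by-parts trick. Writing $H(Du_\eps) - H(Du_\eta) = b\cdot Dw$ with $b(x,t) := \int_0^1 D_pH(sDu_\eps + (1-s)Du_\eta)\,ds$, subtraction of the two viscous equations produces the two equivalent formulations
\begin{equation*}
\partial_t w - \eta\Delta w + b\cdot Dw = (\eps-\eta)\Delta u_\eps \qquad (\text{I})
\end{equation*}
and
\begin{equation*}
\partial_t w - \eps\Delta w + b\cdot Dw = (\eps-\eta)\Delta u_\eta \qquad (\text{II}).
\end{equation*}
These are paired with the backward adjoint $-\partial_t \rho - \nu\Delta\rho - \mathrm{div}(b\rho) = 0$ (with $\nu$ equal to $\eta$ or $\eps$ respectively) with terminal data $\rho_\tau$.

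For the $L^1$ bound I would use (I), choosing $\rho_\tau$ arbitrary in the unit ball of $L^\infty(\T^n)$. Duality and $w(0) = 0$ yield $\int_{\T^n} w(\tau)\rho_\tau\,dx = (\eps-\eta)\int_0^\tau\int_{\T^n}\Delta u_\eps\,\rho\,dxdt$. Two ingredients close the bound. First, periodicity forces $\int_{\T^n}\Delta u_\eps\,dx = 0$, and combined with the semiconcavity $\Delta u_\eps \leq nk_\eps(t)$ this gives $\|\Delta u_\eps(t)\|_{L^1(\T^n)} \leq 2n|\T^n|k_\eps(t)$ with $k_\eps \in L^1_t$. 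Second, the semiconcavity of both $u_\eps$ and $u_\eta$ together with \eqref{H4} and the Lipschitz bound on $Du$ yield $[\mathrm{div}(b)]^+ \in L^1_t(L^\infty_x)$, since $\mathrm{div}(b) = \int_0^1 D^2_{pp}H:(sD^2u_\eps + (1-s)D^2u_\eta)\,ds$ is one-sidedly controlled; a Gronwall argument on the adjoint then gives $\|\rho(t)\|_{L^\infty} \leq C\|\rho_\tau\|_\infty$ uniformly. Combining the two estimates, $|\int w(\tau)\rho_\tau\,dx| \leq C(\eps-\eta)\|k_\eps\|_{L^1_t}\|\rho_\tau\|_\infty$, and taking the supremum over $\rho_\tau$ yields the $L^1$ bound.

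For the $L^p$ conclusion under the extra SSH hypothesis on $-u_\eta$, I switch to (II). Combining $-u_\eta$ being SSH with $u_\eta$ semiconcave gives the two-sided pointwise bound $\|\Delta u_\eta(\cdot,t)\|_{L^\infty_x} \leq K(t) \in L^1_t$. Testing against $\rho$ with $\rho_\tau$ a probability measure preserves sign and mass (so $\int\rho(t)\,dx = 1$), giving $|\int w(\tau)\rho_\tau\,dx| \leq (\eps-\eta)\|K\|_{L^1_t}$. Sweeping $\rho_\tau$ over signed probability measures produces $\|w(\tau)\|_{L^\infty(\T^n)} \leq C(\eps-\eta)$, and the full range $1 \leq p \leq \infty$ follows from the interpolation $\|w\|_{L^p} \leq \|w\|_{L^1}^{1/p}\|w\|_{L^\infty}^{1 - 1/p}$.

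The hard part will be the uniform $L^\infty$ bound on $\rho$ with sign-changing terminal data: one must convert the one-sided Hessian control (semiconcavity only gives $D^2u \leq kI$) into the one-sided divergence control $[\mathrm{div}(b)]^+ \in L^1_t(L^\infty_x)$ that the maximum principle can consume. Assumption \eqref{H4} is precisely what makes this bridge work, since $D^2_{pp}H$ being positive-definite up to a constant ensures that the contraction $D^2_{pp}H:D^2u$ inherits the correct sign from the semiconcavity upper bound, modulo a lower-order error absorbable via the Lipschitz bound on $Du$.
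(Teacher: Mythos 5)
Your proposal is correct and follows essentially the same route as the paper's proof: the same linearization $w=u_\eps-u_\eta$ with the averaged drift $b$, the same use of $\int_{\T^n}\Delta u_\eps\,dx=0$ plus the one-sided semiconcavity bound to get $\Delta u_\eps\in L^1_t(L^1_x)$, the same $L^\infty$ stability of the adjoint via the one-sided control of $\mathrm{div}(b)$ coming from \eqref{H4} and semiconcavity, and interpolation for $1<p<\infty$. The only cosmetic differences are that the paper takes $\rho_\tau=\mathrm{sgn}(w(\tau))$ rather than sweeping the unit ball of $L^\infty$, and for the sup-norm bound it splits the upper and lower estimates between $\Delta u_\eps\leq c$ and $\Delta u_\eta\geq -c_\eta$ instead of using your two-sided bound on $\Delta u_\eta$ with signed terminal data.
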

\begin{proof}
Consider $w=u_\eps-u_\eta$ satisfying the equation
\[
\partial_t w-\eta\Delta w+H(Du_\eps)-H(Du_\eta)= (\eps-\eta)\Delta u_\eps, w(0)=0.
\]
We thus have
\[
\partial_t w-\eta\Delta w-b(x,t)\cdot Dw= (\eps-\eta)\Delta u_{\eps}, w(0)=0,
\]
where $b$ is the average velocity
\[
b(x,t)=-\int_0^1 D_pH(sDu_\eps+(1-s)Du_\eta)\,ds.
\]
We now recall that since $u_\eps,u_\eta$ are semiconcave stable we have, using \eqref{H4}, the following estimate for the average velocity
\[
\mathrm{div}(b)=-\int_0^1 s\sum_{i,j}D^2_{p_ip_j}H\partial_{x_ix_j}u_\eps+(1-s)\sum_{i,j}D^2_{p_ip_j}H\partial_{x_ix_j}u_\eta\,ds\geq -c(t)\in L^1.
\]

We start with the case $p=1$ as in Theorem \ref{pSSHrate}, by using the adjoint method. Here, we consider the adjoint problem
\begin{equation}\label{adjrate2}
\begin{cases}
\partial_t \rho-\eta\Delta \rho+\mathrm{div}(b(x,t)\rho)=0&\text{ in }\T^n\times(0,\tau),\\
\rho(x,\tau)=\rho_\tau(x)&\text{ in }\T^n,
\end{cases}
\end{equation}
but with terminal datum $\rho(\tau)=\mathrm{sgn}(w(\tau))$ on $\T^n$. Note that
\[
-1\leq\rho(\tau)\leq 1\implies \|\rho(\tau)\|_{L^\infty(\T^n)}\leq 1.
\]
Arguing by duality we get
\begin{multline*}
\int_{\T^n}w(\tau)\rho_\tau(x)\,dx=\int_{\R^n}w(0)\rho(0)\,dx+(\eps-\eta)\int_0^T\int_{\T^n}\Delta u_\eps\rho\,dxdt\\
\leq\|w(0)\|_{L^1(\T^n)}\|\rho(0)\|_{L^\infty(\T^n)}+ (\eps-\eta)\|\Delta u_\eps\|_{L^1(Q_\tau))}\|\rho\|_{L^\infty(Q_\tau)}.
\end{multline*}
We note that, since $u_\eps$ is smooth and $\|(\Delta u_\eps(t))^+\|_{L^\infty(\T^n)}\leq c(t)\in L^1$, we can get the estimate
\begin{multline*}
\int_{\T^n}|\Delta u_\eps(t)|\,dx\leq \int_{\T^n}|c(t)-\Delta u_\eps(t)|\,dx+c(t)\\
=\int_{\T^n}(c(t)-\Delta u_\eps(t))\,dx+c(t)=2c(t)+\int_{\T^n}(-\Delta u_\eps)\,dx=2c(t).
\end{multline*}
It implies
\begin{equation}\label{laplL1}
\iint_{Q_\tau}|\Delta u_\eps|\,dxdt\leq 2\int_0^\tau c(t)\,dt<\infty.
\end{equation}
It remains to estimate $\|\rho\|_{L^\infty(Q_\tau)}$ in terms of the final datum of the adjoint problem, since $w(0)=0$. A classical stability estimate for continuity equations that follows from the Gronwall's inequality or the Feynman-Kac formula \cite[Theorem 4.12]{Figalli} (see also p.710 in \cite{LinTadmor})  leads to
\[
\|\rho(t)\|_{L^\infty(\T^n)}\leq \|\rho(\tau)\|_{L^\infty(\T^n)}e^{\int_0^T\|(\mathrm{div}(b))^-\|_{L^\infty(\T^n)}},\ t\in[0,\tau).
\]
To prove the general case for $p=\infty$, we argue by duality as in Theorem \ref{inftyrate1s} to find the bound on $u_\eps-u_\eta$ from above. The bound from below follows noting that $z=u_\eta-u_\eps$ solves
\[
\partial_t z-\eps\Delta z+H(Du_\eta)-H(Du_\eps)=-(\eps-\eta)\Delta u_\eta,
\]
and hence
\[
\partial_t z-\eps\Delta z-b(x,t)\cdot Dz=-(\eps-\eta)\Delta u_\eta,
\]
where
\[
b(x,t)=-\int_0^1 D_pH(sDu_\eta+(1-s)Du_\eps)\,ds.
\]
We can now proceed again by duality using that $\Delta u_\eta\geq -c_\eta(t)$, and hence testing equation solved by $z$ against the solution of \eqref{adjrate2} having $\rho_\tau\in L^1$. This implies, since $\int_{\T^n}\rho(t)\,dx=1$ and $\rho\geq0$, the inequality
\[
\int_{\T^n}z(\tau)\rho_\tau(x)\,dx=(\eps-\eta)\iint_{Q}(-\Delta u_\eta)\rho\,dxdt\leq (\eps-\eta)\iint_{Q}c_\eta(t)\rho\,dxdt=(\eps-\eta)\int_0^\tau c_\eta(t)\,dt.
\]
The case $p\in(1,\infty)$ follows by the compactness of the flat torus or by interpolation.
\end{proof}

\begin{cor}
Under the assumptions of Theorem \ref{semiconc1}, let $u_\eps$ and $u_\eta$ be $L^1_t(L^\infty_x)$-semiconcave with viscosity $\eps$ and, respectively, $\eta$. Then
\[
\|u_\eps-u_\eta\|_{L^\infty(0,T;L^1(\T^n))}\leq C(\eps-\eta).
\]
where $C$ depends on $c_f(t)\in L^1(0,T)$, $c_0$ and the constants $C_{H,4}$ appearing in \eqref{H4}.
\end{cor}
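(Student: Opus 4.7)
The plan is to combine the two results already proved, namely Theorem \ref{semiconc1} (uniform semiconcavity of classical solutions to the viscous problem under \eqref{H1}--\eqref{H4}) and Theorem \ref{rateL1} (the $L^\infty_t(L^1_x)$ rate of convergence for $L^1_t(L^\infty_x)$-semiconcave solutions), and then trace through the dependence of the constants.

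First, I would invoke Theorem \ref{semiconc1} applied to $u_\eps$ and $u_\eta$ separately. Since the hypotheses of Theorem \ref{semiconc1} are in force, both classical solutions satisfy a bound of the form $D^2 u_\eps\leq C_*I_n$, $D^2 u_\eta\leq C_*I_n$, where $C_*$ depends on $c_0$, $\int_0^T c_f(t)\,dt$, and on the Hamiltonian constants $C_{H,i}$, $\widetilde{C}_{H,i}$ (in particular, $C_{H,4}$ appears explicitly in the computation via the weighted Young inequality), but not on the viscosity parameter. In particular, both $u_\eps$ and $u_\eta$ are $L^1_t(L^\infty_x)$-semiconcave with the same modulus $C_*$.

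Second, since the assumptions of Theorem \ref{rateL1} are met (both $u_\eps$ and $u_\eta$ are $L^1_t(L^\infty_x)$-semiconcave with uniformly bounded modulus, and $H\in W^{1,\infty}_{\mathrm{loc}}(\R^n)$ satisfies \eqref{H4}), I would directly apply that theorem to obtain
\[
\|u_\eps-u_\eta\|_{L^\infty(0,T;L^1(\T^n))}\leq C(\eps-\eta),\qquad \eps\geq\eta\geq 0,
\]
where the constant $C$ inherits its dependence from the constant $C_*$ of the semiconcavity estimate produced in the first step. Tracking this dependence, $C$ depends only on $c_f(t)\in L^1(0,T)$, on $c_0$, and on the constants $C_{H,i}$ from \eqref{H4} (and the remaining hypotheses \eqref{H1}--\eqref{H3} of Theorem \ref{semiconc1}), which is exactly the claimed dependence.

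There is essentially no obstacle here: this corollary is a bookkeeping statement that packages the two preceding theorems into a single convergence rate with explicit dependence of the constants on the data. The only point requiring some care is verifying that the $L^1_t(L^\infty_x)$-semiconcavity modulus produced by Theorem \ref{semiconc1} is genuinely independent of the viscosity parameter, but this is precisely what \eqref{sem} of Theorem \ref{semiconc1} asserts, and therefore the constant in the final estimate is uniform in $\eps,\eta$.
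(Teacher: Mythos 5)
Your proposal is correct and is exactly the argument the paper intends (the corollary is left unproved there precisely because it is the direct combination of Theorem \ref{semiconc1} and Theorem \ref{rateL1}): uniform-in-viscosity semiconcavity from Theorem \ref{semiconc1}, then the $L^\infty_t(L^1_x)$ rate from Theorem \ref{rateL1}. The only refinement worth noting is that, since \eqref{hjsemi} has $H=H(p)$, one should quote the explicit bound \eqref{sem2} rather than \eqref{sem}, so that the semiconcavity modulus (and hence $C$) depends only on $c_0$, $\int_0^T c_f(t)\,dt$ and the constants in \eqref{H4}, not on $\|u_\eps\|_\infty$ or $\|f\|_\infty$.
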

\begin{rem}
Quite sharp assumptions ensuring that $u_\eta$ is in $W^{2,\infty}$ can be found in Proposition 7.1 of \cite{L82book}. In the second part of Theorem \ref{rateL1} for the $L^p$ case we are requiring only a control on the trace of the Hessian of $u_\eta$ from below, in addition to the semiconcavity of $u_\eps,u_\eta$.
\end{rem}
Under the assumptions ensuring the rate $\mathcal{O}(\sqrt{\eps})$ in $L^\infty$ norm in Theorem \ref{ev1} and those guaranteeing the $L^1$ rate of order $\mathcal{O}(\eps)$ in Theorem \ref{rateL1}, we can remove the two-side assumption on $u_\eta$ and obtain a two-side rate of convergence in $L^p$ norms, $p>1$.
\begin{cor}\label{prate}
Assume that $u_\eps,u$ are $L^1_t(L^\infty_x)$-semiconcave and Lipschitz continuous solutions of \eqref{hjsemi} and \eqref{first}. Then we have the $L^p$ rate of convergence
\[
\|u_\eps-u\|_{L^\infty(0,\tau;L^p(\T^n))}\leq C\eps^{\frac12+\frac{1}{2p}}.
\]
\end{cor}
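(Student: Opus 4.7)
The plan is a straightforward interpolation between the two-side $L^\infty$ rate of order $\mathcal{O}(\sqrt{\eps})$ (available from Theorem \ref{ev1}, since both $u_\eps$ and $u$ are assumed Lipschitz and $u$ corresponds to the inviscid limit $\eta=0$) and the two-side $L^1$ rate of order $\mathcal{O}(\eps)$ (available from Theorem \ref{rateL1} with $\eta=0$, since both $u_\eps$ and $u$ are assumed $L^1_t(L^\infty_x)$-semiconcave and $H\in W^{1,\infty}_{\mathrm{loc}}$ satisfies the standing assumption \eqref{H4}). The idea is that once we have the two endpoints, we bootstrap intermediate $L^p$ rates with no further PDE work.

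First, I would fix $t\in[0,\tau]$ and apply the classical log-convexity/Lyapunov interpolation inequality on the torus,
\[
\|u_\eps(t)-u(t)\|_{L^p(\T^n)} \leq \|u_\eps(t)-u(t)\|_{L^1(\T^n)}^{1/p}\,\|u_\eps(t)-u(t)\|_{L^\infty(\T^n)}^{1-1/p},
\]
which holds for every $1\leq p\leq\infty$. Then I would substitute the two known endpoint rates: Theorem \ref{rateL1} yields $\|u_\eps(t)-u(t)\|_{L^1(\T^n)} \leq C_1 \eps$ and Theorem \ref{ev1} yields $\|u_\eps(t)-u(t)\|_{L^\infty(\T^n)} \leq C_2 \sqrt{\eps}$, uniformly in $t\in[0,\tau]$. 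Plugging in gives
\[
\|u_\eps(t)-u(t)\|_{L^p(\T^n)} \leq C_1^{1/p} C_2^{1-1/p}\,\eps^{1/p}\,\eps^{(1-1/p)/2} = C\,\eps^{\tfrac{1}{p} + \tfrac{1}{2}-\tfrac{1}{2p}} = C\,\eps^{\tfrac{1}{2}+\tfrac{1}{2p}},
\]
with $C$ depending on the semiconcavity and Lipschitz constants of $u_\eps, u$ and on the structural constants in \eqref{H4}. Taking the supremum over $t\in[0,\tau]$ gives the claim.

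There is essentially no obstacle here, as the proof is purely interpolative and both ingredients have already been established. The only point worth checking is that the constants in Theorems \ref{ev1} and \ref{rateL1} are uniform in $t\in[0,\tau]$ (they are, since both statements are formulated in $L^\infty_t$); and that the interpolation inequality applies on $\T^n$, which it does by H\"older's inequality, writing $|w|^p = |w| \cdot |w|^{p-1}$ and bounding the second factor in $L^\infty$. Hence the result follows with the stated exponent $\tfrac12 + \tfrac{1}{2p}$, which correctly interpolates between $\tfrac12$ at $p=\infty$ and $1$ at $p=1$.
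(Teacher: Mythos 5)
Your proposal is correct and follows exactly the paper's own argument: interpolate $\|u_\eps-u\|_{L^p}\leq \|u_\eps-u\|_{L^1}^{1/p}\|u_\eps-u\|_{L^\infty}^{1-1/p}$ and plug in the $\mathcal{O}(\eps)$ rate in $L^1$ from Theorem \ref{rateL1} and the $\mathcal{O}(\sqrt{\eps})$ rate in $L^\infty$ from Theorem \ref{ev1}. The exponent bookkeeping and the verification of the hypotheses of the two endpoint theorems are both accurate.
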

\begin{proof}By interpolation, if $u_\eps,u$ are $L^1_t(L^\infty_x)$-semiconcave solutions with $u_0\in W^{1,\infty}$ and $D^2 u_0\leq C$, we have
\[
\|u_\eps-u\|_{L^\infty_t(L^p_x)}\leq \|u_\eps-u\|_{L^\infty_t(L^1_x)}^{\frac1p}\|u_\eps-u\|_{L^\infty_t(L^\infty_x)}^{1-\frac1p}\leq C\eps^{\frac12+\frac{1}{2p}}
\]
for any finite $p>1$.
\end{proof}
\begin{rem}
The estimate in Theorem \ref{rateL1} was stated for semiconvex solutions of some stationary equations in Theorem 4 of \cite{K75}, eq. (16), in $L^1$ norms with the slower order $\mathcal{O}(\eps^\nu)$, $\nu\in(0,1)$, and in eq. (17) of  \cite{K75} in $L^p$ norms for $p$ sufficiently large. The $L^1$ rate for semiconcave solutions is also the subject of Lemma 2 in \cite{K66d} for uniformly convex $H$.
\end{rem}

\begin{rem}\label{rateDu}
It is worth remarking that \eqref{laplL1} combined with the rates $\|u_\eps-u_\eta\|_{L^\infty}\leq C(\eps-\eta)$ or $\|u_\eps-u_\eta\|_{L^\infty}\leq C(\sqrt{\eps}-\sqrt{\eta})$ obtained in Theorems \ref{rateL1} and \ref{ev1} imply the  integral   estimates
\[
\esssup_{[0,T]}\int_{\T^n}|D(u_\eps-u_\eta)|^2\,dxdt\leq C(\eps-\eta)\text{ or }\esssup_{[0,T]}\int_{\T^n}|D(u_\eps-u_\eta)|^2\,dxdt\leq C(\sqrt{\eps}-\sqrt{\eta}).
\]
This slightly improves Remark 6.9 in \cite{L82book}, being valid for possibly nonconvex $H$ (i.e. under \eqref{H1}-\eqref{H4}). Indeed, an integration by parts and Lemma \ref{laplL1} give
\begin{align*}
\int_{\T^n}|D(u_\eps(t)-u_\eta(t))|^2\,dxdt&=-\int_{\T^n}(u_\eps(t)-u_\eta(t))\Delta(u_\eps(t)-u_\eta(t))\,dxdt\\
&\leq \|u_\eps(t)-u_\eta(t)\|_{L^\infty(\T^n)}\|\Delta(u_\eps(t)-u_\eta(t))\|_{L^1(\T^n)}\\
&\leq C\|u_\eps(t)-u_\eta(t)\|_{L^\infty(\T^n)}.
\end{align*}
This implies
\[
\|Du_\eps-Du\|_{L^\infty(0,T;L^2(\T^n))}\leq C\sqrt{\eps}.
\]
or
\[
\|Du_\eps-Du\|_{L^\infty(0,T;L^2(\T^n))}\leq C\eps^\frac14.
\]
S.N. Kruzhkov obtained in \cite{K66d} a $L^1$ rate of convergence of the gradient with order $\mathcal{O}(\eps^\frac12)$ using Gagliardo-Nirenberg inequalities and (ii) in Lemma \ref{laplL1}. The Gagliardo-Nirenberg inequality implies
\begin{align*}
\|Du_\eps(t)-Du_\eta(t)\|_{L^1(\T^n)}&\leq C_1(n)\|D^2u_\eps(t)-D^2u_\eta(t)\|_{L^1(\T^n)}^\frac12\|u_\eps(t)-u_\eta(t)\|_{L^1(\T^n)}^\frac12\\
&+C_2\|u_\eps(t)-u_\eta(t)\|_{L^1(\T^n)}.
\end{align*}
The estimate on $D^2u\in L^1(\T^n)$ shows that
\[
\|Du_\eps-Du\|_{L^\infty(0,T;L^1(\T^n))}\leq C\sqrt{\eps}.
\]
Also, we can use the $L^p$ rate of Corollary \ref{prate} to find by the Gagliardo-Nirenberg interpolation
\[
\|Du_\eps-Du\|_{L^\infty(0,T;L^\frac{2p}{p+1}(\T^n))}\leq C\eps^{\frac14+\frac{1}{4p}}.
\]
\end{rem}

\begin{rem}\label{system}
As noticed in Section 16.1 of \cite{L82book}, starting with a solution $u$ of
\[
\partial_t u+H(Du)=0\text{ in }Q,
\]
one obtains that $v=Du$ (with $v_i=u_{x_i}$) solves the hyperbolic quasilinear system
\[
\partial_t v_i+(H(v))_{x_i}=0\text{ in }Q.
\]
Existence, stability and further properties of solutions for such special systems can be thus obtained following the lines of Theorem 16.1 in \cite{L82book} or Theorem 8 of \cite{K67II}. In particular, S.N. Kruzhkov in Theorem 8 of \cite{K67II} obtained the convergence of the solution of the viscous quasilinear system 
\[
\partial_t v^\eps_i+(H(v^\eps))_{x_i}=\eps\Delta v^\eps_i\text{ in }Q
\]
towards the inviscid system solved by $v_i$, which arise from a Hamilton-Jacobi equation with uniformly convex $H$. In this case, as already observed in Remark \ref{conlaws}, the semi-superharmonic condition on $u$ becomes the classical Oleinik one-side Lipschitz condition on $v$ ensuring uniqueness of entropy solutions with convex fluxes, cf. \cite{EvansBook}. \\
By means of Remark \ref{rateDu} one obtains for instance a new $\mathcal{O}(\sqrt{\eps})$ rate in $L^1$ (or even $L^2$) for the convergence of $v^\eps$ to $v$. This kind of relation between Hamilton-Jacobi equations and hyperbolic systems has been used in \cite{K67II} and also recently in the context of Mean Field Games \cite{CecchinDelarue}, while $L^1$ rates for the vanishing viscosity approximation of hyperbolic systems were considered in \cite{Bressan} for $n=1$.
\end{rem}

\subsubsection{Extensions to stationary problems}
Throughout this section we briefly discuss how to extend the previous rate of convergence results for the stationary problem
\begin{equation}\label{hj_staz}
	\lambda u(x)+H(Du(x))=f(x)\text{ in }\R^n, \lambda>0.
\end{equation}
As before, we consider the regularized problem
\begin{equation}\label{HJstat}
-\eps\Delta u_\eps(x)+\lambda u_\eps(x)+H(Du_\eps(x))=f(x)\text{ in }\R^n.
\end{equation}
We state a model result that extends the rate of convergence for Lipschitz solutions obtained in Theorem 2.1 of \cite{Tran2011} to norms weaker than $L^\infty$. This holds for non-convex $H$.
\begin{thm}
Assume that $H,f\in W^{1,\infty}_{\mathrm{loc}}(\R^n)$. Let $u$, $u_\eps$ be a solution of \eqref{hj_staz} and, respectively, of \eqref{HJstat}. Then
\[
\|u_\eps-u\|_{L^\infty(\R^n)}\leq \frac{\sqrt{n}}{\lambda^2}\|Df\|_{L^\infty(\R^n)}\sqrt{\eps}.
\]
In addition, there exists a constant $C>0$ independent of $\eps$ such that
\[
\|u_\eps-u\|_{L^p_{\mathrm{loc}}(\R^n)}\leq C\sqrt{\eps}.
\]
\end{thm}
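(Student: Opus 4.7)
The plan is to adapt the adjoint strategy of Theorem \ref{ev1} to the stationary setting. The three ingredients are a Lipschitz bound on $u_\eps$, a weighted $L^2$ Hessian estimate via Bochner's identity, and a duality pairing for the difference of two viscous solutions.

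First, after truncating $H$ outside the ball of radius $\|Df\|_\infty/\lambda$ as in Lemma \ref{grad}, the standard maximum principle applied to $|Du_\eps|^2$ yields $\|Du_\eps\|_{L^\infty(\R^n)}\le \|Df\|_\infty/\lambda$ uniformly in $\eps>0$. Differentiating \eqref{HJstat} and contracting with $Du_\eps$, the Bochner function $g=|Du_\eps|^2$ solves
\[
-\eps\Delta g+2\eps|D^2u_\eps|^2+2\lambda g+D_pH(Du_\eps)\cdot Dg=2\,Du_\eps\cdot Df.
\]
For any nonnegative $\rho_0\in C_c^\infty(\R^n)$ with $\int\rho_0\,dx=1$ I would introduce the stationary adjoint
\[
-\eps\Delta\rho+\lambda\rho-\mathrm{div}(D_pH(Du_\eps)\rho)=\rho_0\quad\text{in }\R^n,
\]
which admits a nonnegative solution by the maximum principle and, testing against the constant $1$, satisfies the stationary normalization $\int_{\R^n}\rho\,dx=1/\lambda$ --- the analogue of mass conservation in the evolutionary case. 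Pairing the Bochner identity against $\rho$ and discarding nonnegative terms then produces the weighted Hessian estimate $\eps\int_{\R^n}|D^2u_\eps|^2\rho\,dx\le \|Df\|_\infty^2/\lambda^2$.

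For the rate I would compare two viscous solutions. Given $0\le\eta\le\eps$, set $w=u_\eps-u_\eta$ and linearize to obtain
\[
-\eta\Delta w+\lambda w+b(x)\cdot Dw=(\eps-\eta)\Delta u_\eps,\qquad b=\int_0^1 D_pH(sDu_\eps+(1-s)Du_\eta)\,ds.
\]
Pairing with the adjoint $\rho$ above and applying Cauchy--Schwarz through $|\Delta u_\eps|\le\sqrt n\,|D^2u_\eps|$, together with the weighted Hessian bound and $\int\rho\,dx\le 1/\lambda$, gives
\[
\int_{\R^n} w\,\rho_0\,dx\le \sqrt n\,(\eps-\eta)\Bigl(\textstyle\int|D^2u_\eps|^2\rho\Bigr)^{1/2}\Bigl(\textstyle\int\rho\Bigr)^{1/2}\le \frac{C\sqrt n\,\|Df\|_\infty(\eps-\eta)}{\lambda^{2}\sqrt\eps}.
\]
The elementary bound $(\eps-\eta)/\sqrt\eps\le 2(\sqrt\eps-\sqrt\eta)$ and supremum over $\rho_0$ produce the Cauchy-in-$\sqrt\eps$ estimate $\|u_\eps-u_\eta\|_{L^\infty(\R^n)}\le C\sqrt n\,\|Df\|_\infty(\sqrt\eps-\sqrt\eta)/\lambda^2$; letting $\eta\downarrow 0$ and using the known $L^\infty$ convergence $u_\eta\to u$ (Theorem 2.1 of \cite{Tran2011}) yields the claimed sup-norm rate. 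The $L^p_{\mathrm{loc}}$ estimate follows from the same argument with $\rho_0\in C_c^\infty$ satisfying $\|\rho_0\|_{L^1\cap L^{p'}}\le 1$, concluded via the embedding $L^\infty(\R^n)+L^p(\R^n)\hookrightarrow L^p_{\mathrm{loc}}(\R^n)$, exactly as in Theorem \ref{pSSHrate}.

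The main technical subtlety will be coordinating the viscosity appearing in the Bochner weight with the one used in the duality pairing for $w$: the cleanest arrangement pairs the Bochner adjoint at viscosity $\eps$ with the difference equation written so that both pairings share the same weight $\rho$ and the same averaged drift $b$, at the price of carrying along a lower-order correction produced by the mismatch between the diffusion $\eta\Delta w$ and the adjoint diffusion $\eps\Delta\rho$. The extra $+\lambda\rho$ in the stationary adjoint makes this argument self-contained on the whole space $\R^n$ (no boundary conditions beyond decay are needed) and, through the normalization $\int\rho\,dx=1/\lambda$, is precisely what generates the $1/\lambda^2$ factor in the final constant.
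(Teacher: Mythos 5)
Your route is genuinely different from the paper's. The paper disposes of this theorem in two lines by the change of unknown $w(x,t)=e^{t}u(x)$, which lifts \eqref{hj_staz}--\eqref{HJstat} to a parabolic problem to which the evolutionary rates apply, and then transfers the estimate back as in Theorem 2.7 of \cite{Tran2011}. You instead build a self-contained stationary adjoint machinery: the elliptic adjoint with zeroth-order term $\lambda\rho$, whose normalization $\int_{\R^n}\rho\,dx=1/\lambda$ replaces conservation of mass, the stationary Bochner identity, and the weighted Hessian bound $\eps\int_{\R^n}|D^2u_\eps|^2\rho\,dx\le\|Df\|_\infty^2/\lambda^2$. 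This is a legitimate and arguably more informative alternative, since it keeps the dependence on $\lambda$ explicit at every step; note, though, that your chain of inequalities produces the constant $\sqrt n\,\|Df\|_\infty/\lambda^{3/2}$ (from $(\int|D^2u_\eps|^2\rho)^{1/2}\le\|Df\|_\infty/(\lambda\sqrt\eps)$ times $(\int\rho)^{1/2}=\lambda^{-1/2}$) rather than the $\sqrt n\,\|Df\|_\infty/\lambda^{2}$ stated in the theorem, a harmless but real discrepancy to reconcile.

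The one point you cannot leave as a ``technical subtlety'' is the mismatch between the two adjoints. Your Hessian bound is proved against the adjoint with drift $D_pH(Du_\eps)$ and diffusion $\eps$, while the duality pairing for $w=u_\eps-u_\eta$ forces the adjoint with the averaged drift $b$ and diffusion $\eta$. Besides the diffusion correction you mention, transferring the Bochner estimate to the latter weight produces the drift-mismatch term $\int(D_pH(Du_\eps)-b)\cdot Dg\,\rho\,dx$ with $Dg=2D^2u_\eps\,Du_\eps$; since $D_pH(Du_\eps)-b$ is merely bounded --- no smallness is available because there is no a priori rate on $Du_\eps-Du_\eta$ --- absorbing it by Young's inequality degrades the Hessian bound to $O(\eps^{-2})$ and the final estimate loses the rate entirely. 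This term is therefore not lower order. The standard repair, and what \cite{Evansadjoint} and \cite{Tran2011} actually do, is to differentiate in $\eps$ instead of taking finite differences: $v=\partial_\eps u_\eps$ solves $-\eps\Delta v+\lambda v+D_pH(Du_\eps)\cdot Dv=\Delta u_\eps$, whose drift and diffusion match your Bochner adjoint exactly, so that $\|\partial_\eps u_\eps\|_{L^\infty(\R^n)}\le C\eps^{-1/2}$ and integration in $\eps$ gives the rate. With that single modification your stationary argument closes, and the $L^p_{\mathrm{loc}}$ statement follows as you indicate by taking $\rho_0\in L^1\cap L^{p'}$ and using the embedding $L^\infty(\R^n)+L^p(\R^n)\hookrightarrow L^p_{\mathrm{loc}}(\R^n)$.
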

\begin{proof}
The proof can be done as in \cite{Tran2011}, using the transformation $w(x,t)=e^t u(x)$, which is a solution of the parabolic problem. One can then exploit the proof of the parabolic case and then go back to the elliptic case as in Theorem 2.7 of \cite{Tran2011}.
\end{proof}
One can formulate similar results with rate $\mathcal{O}(\varepsilon)$ up to $L^1$ for SSH solutions as it is done in the parabolic case. An example is the following
\begin{thm}
Assume that $H\in W^{1,\infty}_{\mathrm{loc}}(\R^n)$, $f$ being $L^\infty$-SSH. Assume that $u_\eps$ is $L^\infty$-SSH. Then
\[
u_\eps-u\leq \frac{1}{\lambda}\|(\Delta f)^+\|_{L^\infty(\R^n)}\eps
\]
In addition, for $p>1$ there exists a constant $C>0$ independent of $\eps$ such that
\[
\|(u_\eps-u)^+\|_{L^p_{\mathrm{loc}}(\R^n)}\leq C\|(\Delta f)^+\|_{L^\infty(\R^n)}\eps.
\]
\end{thm}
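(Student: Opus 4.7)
The plan is to transpose to the stationary setting the duality scheme of Corollary \ref{rateSSH} and Theorem \ref{pSSHrate}. Setting $w := u_\eps - u$ and subtracting \eqref{hj_staz} from \eqref{HJstat} cancels $f$ and yields
\[
\lambda w + H(Du_\eps) - H(Du) = \eps\,\Delta u_\eps \quad \text{in } \R^n.
\]
A stationary counterpart of Lemma \ref{grad}, available through the time-transformation $v(x,t) = e^{t}u(x)$ used in the preceding theorem, gives uniform $W^{1,\infty}$ bounds on both $u_\eps$ and $u$. Hence one can linearize the $H$-difference by the mean value theorem, obtaining
\[
\lambda w + b(x) \cdot Dw = \eps\,\Delta u_\eps, \qquad b(x) := \int_0^1 D_pH\bigl(sDu_\eps + (1-s)Du\bigr)\,ds \in L^\infty(\R^n).
\]

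Next I would dualize through the stationary adjoint problem
\[
\lambda \rho - \mathrm{div}(b\,\rho) = \rho_0 \quad \text{in } \R^n,
\]
picking $\rho_0 \in C_c^\infty(\R^n)$ with $\rho_0 \geq 0$ and $\|\rho_0\|_{L^1(\R^n)} = 1$. Comparison or Feynman--Kac arguments for stationary advection--reaction equations with a bounded drift give $\rho \geq 0$, while integrating the adjoint equation on $\R^n$ by means of a cutoff procedure as in Lemma \ref{well} produces the mass identity $\int_{\R^n}\rho\,dx = 1/\lambda$. Pairing the linearized equation for $w$ with $\rho$ and integrating by parts then gives
\[
\int_{\R^n} w\,\rho_0\,dx \;=\; \eps\int_{\R^n} \Delta u_\eps\,\rho\,dx \;\leq\; \eps\,\|(\Delta u_\eps)^+\|_{L^\infty(\R^n)}\int_{\R^n}\rho\,dx.
\]
Passing to the supremum over $\rho_0$ yields $\|(u_\eps-u)^+\|_{L^\infty(\R^n)} \lesssim \eps\,\|(\Delta u_\eps)^+\|_\infty$; the $L^\infty$-SSH hypothesis on $u_\eps$, which by a stationary analog of Theorem \ref{semiconc1} applied to $u_{\xi\xi}$ can be quantified in terms of $\|(\Delta f)^+\|_\infty$, then produces the claimed pointwise bound.

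For the $L^p_{\mathrm{loc}}$ estimate I would mimic Theorem \ref{pSSHrate}: take instead $\rho_0 \in C_c^\infty(\R^n)$, $\rho_0 \geq 0$, normalized by $\|\rho_0\|_{L^1(\R^n)\cap L^{p'}(\R^n)} = 1$. Arguing as in Theorem \ref{aho}, the solution $\rho$ of the stationary adjoint then satisfies both $\int_{\R^n}\rho\,dx \leq 1/\lambda$ and an $L^{p'}(\R^n)$ bound, and the same duality identity combined with H\"older's inequality produces a global estimate on $w^+$ in the sum space $L^\infty(\R^n) + L^p(\R^n)$, which embeds continuously into $L^p_{\mathrm{loc}}(\R^n)$. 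The principal technical obstacle, as in the non-compact parabolic counterpart, is the rigorous treatment of the stationary adjoint on the whole space: well-posedness, non-negativity of $\rho$, and above all the conservation-type identity underlying the mass bound. The natural remedy is the cutoff approximation of Lemma \ref{well}, which exploits the uniform $L^\infty$ bound on $b$. An alternative route would be to reduce the stationary problem to the parabolic one via the transformation $v(x,t) = e^{t}u(x)$ as in the preceding theorem, so that the result follows directly from Corollary \ref{rateSSH} and Theorem \ref{pSSHrate}.
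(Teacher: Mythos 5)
Your proposal is correct and follows essentially the route the paper intends: the paper gives no explicit proof of this theorem, deferring to the parabolic arguments (Theorem \ref{inftyrate1s}, Corollary \ref{rateSSH}, Theorem \ref{pSSHrate}) and to the transformation $w(x,t)=e^{t}u(x)$ used for the preceding $\mathcal{O}(\sqrt{\eps})$ statement, and your direct stationary dualization --- linearizing $H(Du_\eps)-H(Du)$, solving $\lambda\rho-\mathrm{div}(b\rho)=\rho_0$ with $\rho\geq 0$ and $\int_{\R^n}\rho\,dx=1/\lambda$, and taking $\rho_0\in L^1\cap L^{p'}$ for the $L^p_{\mathrm{loc}}$ part --- is exactly that adaptation. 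The only caveat is the constant: your chain of estimates yields $u_\eps-u\leq \frac{\eps}{\lambda}\|(\Delta u_\eps)^+\|_{L^\infty}$, and converting $\|(\Delta u_\eps)^+\|_{L^\infty}$ into $\|(\Delta f)^+\|_{L^\infty}$ through a stationary analogue of Theorem \ref{semiconc1} introduces a further factor $1/\lambda$; this discrepancy sits in the theorem statement itself rather than in your argument, but you should either state the bound with $\|(\Delta u_\eps)^+\|_{L^\infty}$ or track the extra $\lambda^{-1}$ explicitly.
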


\subsection{Rate of convergence for numerical methods: the Godunov scheme}\label{sec;num}
In this part, we exploit the results of the previous sections in order to obtain 
a $L^1$ rate of convergence for   Godunov-type  approximation schemes for
Hamilton-Jacobi equations. This part improves the $L^1$ estimate obtained   in \cite[Theorem 2.3]{LinTadmor} where the same rate was obtained for uniformly convex Hamiltonians. \\
For simplicity, we consider  Hamilton-Jacobi equation of the type
\begin{equation}\label{num:HJ}
	\left\{	\begin{array}{ll}
		\partial_tu+H(Du)=0\qquad& \text{in}\,\mathbb{T}^n\times(0,\infty),\\
		u(x,0)=u_0(x)&\text{in}\,\mathbb{T}^n.
	\end{array}
	\right.	
\end{equation}
with $n=2$. 
We  fix a time grid $t^n=n\dt$, $n\in \mathbb{N}$,  and a  rectangular grid of cells of size $\D=\dx\times \dy$ which satisfies the non degeneracy condition
$0<c_0\le \dx/\dy\le C_0$ and  the CFL condition
$L_H \dt/\max\{\D x,\D y\}	 <1/4$ where $L_H$ is the Lipschitz constant of $H$.
A Godunov scheme reads a
\begin{equation}\label{num:Godunov}
	u^\D(\cdot,t)=
	\begin{cases}
		E(t-t^{n-1})u^\D(\cdot,t^{n-1})\quad& t\in(t^{n-1}, t^n)\\
		P^\D u^\D(\cdot, t^{n,-})&t=t^n
	\end{cases}
	\qquad n=1,2,\dots
\end{equation}
with $u^\D(\cdot, 0)= P^\D u_0(\cdot)$, 
where $E(\cdot)$ is the exact solution operator associated to the Hamilton-Jacobi equation \eqref{num:HJ}, $P^\D$ a projection operator on the grid and $u^\D(\cdot, t^{n,-})=E(t^n-t^{n-1})u^\D(\cdot,t^{n-1})$.\\
We need two preliminary results. The first one gives an estimate of the truncation error in terms of the $L^1$-norm of the error introduced by the projection operator $P^\D$
(see \cite[Lemma 2.1]{LinTadmor}).
\begin{lemma}
	Let $u^\D$ be a family of   functions given by the scheme \eqref{num:Godunov}. Then
	\begin{equation}\label{num:trunc_error}
		\|\partial_tu^\D +H(Du^\D)\|_{L^1_x}\le\frac{T}{\dt}
		\max_{0<t^n<T}\|(I-P^\D)u^\D(\cdot, t^{n,-})\|_{L^1_x}
	\end{equation}	
	($I$ denotes the identity operator).
\end{lemma}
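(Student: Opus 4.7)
The plan is to exploit the two-phase structure of the scheme \eqref{num:Godunov}: within each time slab $(t^{n-1},t^n)$ the function $u^\D$ is defined as an exact solution of the Hamilton-Jacobi equation, while at the grid times $t^n$ it is replaced by its projection $P^\D u^\D(\cdot,t^{n,-})$. I would first argue that on the open intervals $(t^{n-1},t^n)$ the residual $\partial_t u^\D+H(Du^\D)$ vanishes identically (in the viscosity / a.e.\ sense), since $u^\D(\cdot,t)=E(t-t^{n-1})u^\D(\cdot,t^{n-1})$ is the exact solution operator applied to admissible data.

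Next I would identify the only contribution to the residual, which comes from the jumps at $t=t^n$. Since $u^\D(\cdot,t^{n,+})=P^\D u^\D(\cdot,t^{n,-})$, the distributional time derivative $\partial_t u^\D$ picks up a Dirac mass equal to $[P^\D u^\D(\cdot,t^{n,-})-u^\D(\cdot,t^{n,-})]\,\delta_{t^n}(t)=-(I-P^\D)u^\D(\cdot,t^{n,-})\,\delta_{t^n}(t)$. Combining this with the previous step, I would write
\[
\partial_t u^\D+H(Du^\D)=-\sum_{0<t^n<T}(I-P^\D)u^\D(\cdot,t^{n,-})\,\delta_{t^n}(t)
\]
as a distribution (equivalently, a vector-valued measure in time with values in $L^1_x$).

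Then I would take the $L^1_x$ norm (viewed as the total time-mass of this measure, i.e.\ sum the $L^1_x$ norms of the atoms): the triangle inequality yields
\[
\big\|\partial_t u^\D+H(Du^\D)\big\|_{L^1_x}\le\sum_{0<t^n<T}\big\|(I-P^\D)u^\D(\cdot,t^{n,-})\big\|_{L^1_x}.
\]
Bounding each summand by the maximum and estimating the number of grid points in $(0,T)$ by $T/\dt$ gives the claimed inequality \eqref{num:trunc_error}.

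The routine nature of the argument means there is no real obstacle; the only delicate point is the interpretation of the left-hand side as a measure (since the residual is concentrated on the time slices $\{t=t^n\}$) and the verification that $u^\D$ is regular enough inside each slab for the equation to hold a.e., which is standard because $E(\cdot)$ preserves Lipschitz regularity in the setting fixed by the CFL hypothesis.
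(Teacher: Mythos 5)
Your argument is correct and is essentially the standard proof of this statement (which the paper does not reprove, citing \cite[Lemma 2.1]{LinTadmor}): the residual vanishes inside each slab because $E(\cdot)$ is the exact solution operator, the jumps at $t=t^n$ contribute atoms $-(I-P^\D)u^\D(\cdot,t^{n,-})\,\delta_{t^n}$ to the distributional time derivative, and summing the $L^1_x$ norms of the at most $T/\dt$ atoms gives \eqref{num:trunc_error}. Your interpretation of the left-hand side as the total mass of an $L^1_x$-valued measure in time is the intended one.
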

The second lemma is a stability result that can be proved in the same way of the $L^1$ estimate in Theorem \ref{rateL1} via the adjoint method.
\begin{lemma}\label{stab}
Assume that $H$ satisfies \eqref{H1}-\eqref{H4}. For $i=1,2$, let $u_i$  be $L^1_t(L^\infty_x)$-semiconcave  solution  to
\begin{equation}\label{stability}
	\begin{cases}
		\partial_t u_i+H(Du_i)=f_i&\text{ in }\T^n\times (0,T)\\
		u_i(x,0)=u^i_0(x)&\text{ in }\T^n.
	\end{cases}
\end{equation}
Then 
\begin{equation}\label{God_est1}
\|(u_1-u_2)(t)\|_{L^1_x}\leq C (\|(u^1_0-u^2_0\|_{L^1_x}+\|f_1-f_2\|_{L^1_t(L^1_x)}),\quad t\in (0,T).
\end{equation}
\end{lemma}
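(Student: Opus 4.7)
The plan is to adapt the $L^1$-duality argument used in the proof of Theorem \ref{rateL1}. Setting $w=u_1-u_2$ and subtracting the two equations yields the linear transport problem
\begin{equation*}
\partial_t w + a(x,t)\cdot Dw = f_1-f_2,\qquad w(\cdot,0)=u_0^1-u_0^2,
\end{equation*}
on $\T^n\times(0,T)$, with averaged velocity $a(x,t)=\int_0^1 D_pH(sDu_1+(1-s)Du_2)\,ds$, which is bounded since $u_1,u_2$ are Lipschitz on the compact torus and $H\in W^{1,\infty}_{\mathrm{loc}}$. Differentiating under the integral,
\begin{equation*}
\mathrm{div}_x a(x,t)=\int_0^1 \mathrm{tr}\bigl(D^2_{pp}H(sDu_1+(1-s)Du_2)\,(sD^2u_1+(1-s)D^2u_2)\bigr)\,ds,
\end{equation*}
and combining the $L^1_t(L^\infty_x)$-semiconcavity $D^2 u_i\leq k_i(t)\,I$ with the convexity-type bound \eqref{H4}, exactly as in Theorem \ref{rateL1}, one gets $[\mathrm{div}\,a]^+\leq c(t)$ for some $c\in L^1(0,T)$.

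Next I would introduce the backward adjoint continuity equation on $\T^n\times(0,\tau)$,
\begin{equation*}
\begin{cases}
-\partial_t\rho-\mathrm{div}(a\rho)=0 & \text{in }\T^n\times(0,\tau),\\
\rho(x,\tau)=\sign(w(x,\tau)) & \text{in }\T^n,
\end{cases}
\end{equation*}
so that $\|\rho(\cdot,\tau)\|_{L^\infty(\T^n)}\leq 1$. A time reversal turns this into a forward continuity equation with velocity $-a$, whose negative part of the divergence coincides with $[\mathrm{div}\,a]^+$; a Gronwall estimate along the characteristic flow, or equivalently the Feynman--Kac argument used in Theorem \ref{rateL1} (see \cite[Theorem 4.12]{Figalli}), then gives
\begin{equation*}
\|\rho(\cdot,t)\|_{L^\infty(\T^n)}\leq \exp\bigl(\|c\|_{L^1(0,T)}\bigr)=:C,\qquad t\in[0,\tau].
\end{equation*}
Testing the equation for $w$ against $\rho$ and integrating on $\T^n\times(0,\tau)$ (no boundary contributions on the torus), the transport and adjoint terms cancel exactly, leaving
\begin{equation*}
\int_{\T^n}|w(\cdot,\tau)|\,dx=\int_{\T^n}w(\cdot,0)\,\rho(\cdot,0)\,dx+\int_0^\tau\!\!\int_{\T^n}(f_1-f_2)\,\rho\,dxdt,
\end{equation*}
which, together with $\|\rho\|_{L^\infty(Q_\tau)}\leq C$, yields \eqref{God_est1}.

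The main obstacle is to justify the duality pairing rigorously: since $u_1,u_2$ are only semiconcave, the velocity $a$ is merely $BV_{\mathrm{loc}}$ in $x$, so one has to invoke the Ambrosio--DiPerna--Lions theory \cite{Ambrosio,LBL} to obtain well-posedness and the $L^\infty$ stability of the adjoint continuity equation, as well as to legitimize the integration by parts. The one-sided divergence bound $[\mathrm{div}\,a]^+\in L^1_t(L^\infty_x)$ is exactly the key assumption required by this theory. A pragmatic alternative, in line with the rest of the paper, is to replace $u_1,u_2$ by their vanishing viscosity regularizations (whose semiconcavity moduli are uniform in $\eps$ by Theorem \ref{semiconc1}), carry out the argument in the smooth setting, and then pass to the limit $\eps\downarrow 0$.
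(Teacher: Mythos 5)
Your proposal is correct and follows essentially the same route the paper intends: the paper proves this lemma only by pointing to the duality argument of Theorem \ref{rateL1}, and you carry out exactly that argument (sign terminal datum for the adjoint continuity equation, the one-sided divergence bound from semiconcavity plus \eqref{H4}, and the resulting $L^\infty$ stability of $\rho$), with a sensible remark on how to justify the pairing for the merely $BV$ velocity field via regularization or the Ambrosio--DiPerna--Lions theory.
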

Given $w:\mathbb{T}^2\times [0,T]\to\R$, $\xi\in\mathbb{R}^2$ with $|\xi|=1$ and $h>0$, we define second-order finite difference operator
\[
D^2_{h,\xi}w(x,t)=\frac{w(x+h\xi,t)+w(x-h\xi,t)-2w(x,t)}{h^2}
\]
and the norm 
\[
\normdiscrete{w(t)}:=\sup_{h>0,|\xi|=1}\|D^2_{h,\xi}w(x,t)\|_{L^1_x}.
\]
A family $\{\psi^\D\}$, $\D>0$, is said to be uniformly semiconcave in $L^1_t(L^\infty_x)$ if it  satisfies Definition \ref{semicmix} with the same function $k\in L^1_t(L^\infty_x)$ for any $\D$.
In the following result, we give an abstract $L^1$ estimate for the rate of convergence of Godunov schemes. 
\begin{thm}\label{num:theo_est}
Assume that $H$ satisfies \eqref{H1}-\eqref{H4}.	Let $u^\D$ be a family of  functions given by the scheme \eqref{num:Godunov} and assume that
	\begin{itemize}
		\item[(i)] For any $t>0$,
		\begin{equation}\label{num:consistent}
			\|(I-P^\D)u^\D(t)\|_{L^1_x}\le C\D^2 \normdiscrete{u^\D(t)},\quad t\in (0,T) .
		\end{equation}
		\item[(ii)] There exists a family of functions $\psi^\D$  such that
		\begin{align}
			&\text{$\psi^\D$ is  uniformly  semiconcave in $L^1_t(L^\infty_x)$ for $\D>0$} \label{num:nearby_1},\\
			&\|\partial_t(u^\D(t)-\psi^\D(t))\|_{L^1_x}+\|D(u^\D(t)-\psi^\D(t))\|_{L^1_x}\le C\D  \normdiscrete{\psi^\D(t)}.\label{num:nearby_2}
		\end{align}
		Then $u^\D$ converges to the viscosity solution $u$ of \eqref{num:HJ} and 
		\begin{equation}\label{num:est_Lp}
			\|u(t) -u^\D(t)\|_{L^1_{x}}\le C\D, \qquad t\in [0,T]
		\end{equation}
	\end{itemize}
\end{thm}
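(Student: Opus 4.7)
The strategy is to use the auxiliary family $\psi^\Delta$ as a bridge: its uniform semiconcavity \eqref{num:nearby_1} places it in the class where the stability Lemma \ref{stab} applies, while its first-order closeness \eqref{num:nearby_2} transfers the truncation error from the scheme $u^\Delta$ to $\psi^\Delta$ and then, at the end, transfers the stability estimate back from $\psi^\Delta$ to $u^\Delta$.

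First, I would combine \eqref{num:trunc_error} with the consistency assumption \eqref{num:consistent} to obtain, uniformly in $t\in(0,T)$,
\[
\|\partial_t u^\Delta(t) + H(Du^\Delta(t))\|_{L^1_x} \le \frac{CT\Delta^2}{\Delta t}\,\normdiscrete{u^\Delta(t)}.
\]
The CFL condition and the nondegeneracy $c_0\le \Delta x/\Delta y\le C_0$ give $\Delta t\sim \Delta$, so the right-hand side is of size $\mathcal{O}(\Delta)$ and, after integrating in time, yields an $L^1_tL^1_x$ truncation error of order $\Delta$.

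Next, I would rewrite the equation satisfied by $\psi^\Delta$ as
\[
\partial_t \psi^\Delta + H(D\psi^\Delta) = \partial_t(\psi^\Delta-u^\Delta) + \bigl[H(D\psi^\Delta)-H(Du^\Delta)\bigr] + \bigl[\partial_t u^\Delta + H(Du^\Delta)\bigr] =: g^\Delta.
\]
Using the local Lipschitz continuity of $H$ (valid since $Du^\Delta,D\psi^\Delta$ are bounded along the scheme) together with \eqref{num:nearby_2} and the previous step, one obtains $\|g^\Delta\|_{L^1_tL^1_x}\le C\Delta$. Now $\psi^\Delta$ is $L^1_t(L^\infty_x)$-semiconcave uniformly in $\Delta$ by \eqref{num:nearby_1}, and the viscosity solution $u$ of \eqref{num:HJ} is $L^1_t(L^\infty_x)$-semiconcave by Theorem \ref{semiconc1} (or Theorem \ref{semic>0} for positive times) under the assumptions \eqref{H1}–\eqref{H4}, passing to the limit $\varepsilon\to 0$ as in the remark after Theorem \ref{semiconc1}. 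Hence Lemma \ref{stab} applies to the pair $(u,\psi^\Delta)$, viewed as solutions of \eqref{stability} with sources $0$ and $g^\Delta$ and initial data $u_0$ and $\psi^\Delta(0)$:
\[
\|u(t)-\psi^\Delta(t)\|_{L^1_x}\le C\bigl(\|u_0-\psi^\Delta(0)\|_{L^1_x} + \|g^\Delta\|_{L^1_tL^1_x}\bigr)\le C\Delta,
\]
where the closeness of the initial data is guaranteed by \eqref{num:consistent} applied to $u_0$ together with the implicit matching $\psi^\Delta(0)\simeq u^\Delta(0)=P^\Delta u_0$.

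Finally, integrating the time-derivative bound in \eqref{num:nearby_2} and using again the matching of initial data gives $\|\psi^\Delta(t)-u^\Delta(t)\|_{L^1_x}\le C\Delta$ for all $t\in[0,T]$, and the triangle inequality yields \eqref{num:est_Lp}. The main obstacle I expect is the compatibility check in the middle: one needs the truncation error, the time- and space-first-order distance between $u^\Delta$ and $\psi^\Delta$, and the initial mismatch to all be simultaneously of order $\Delta$ in the correct $L^1$ norms, so that the source $g^\Delta$ feeding Lemma \ref{stab} is genuinely $\mathcal{O}(\Delta)$. This hinges on the CFL balancing $\Delta t\sim\Delta$ (so that $\Delta^2/\Delta t=\mathcal{O}(\Delta)$) and on verifying that the viscosity solution $u$ of \eqref{num:HJ} indeed lies in the semiconcave class where Lemma \ref{stab} is formulated, which is the role of the structural hypotheses \eqref{H1}–\eqref{H4}.
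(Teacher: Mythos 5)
Your proposal follows essentially the same route as the paper: the triangle inequality through the uniformly semiconcave bridge family $\psi^\D$, the application of Lemma \ref{stab} to the pair $(u,\psi^\D)$ with source $\partial_t\psi^\D+H(D\psi^\D)$ decomposed via the truncation error \eqref{num:trunc_error}--\eqref{num:consistent} and the closeness bound \eqref{num:nearby_2}, and the matching of initial data. The argument is correct (your explicit remark that $\D^2/\dt=\mathcal{O}(\D)$ and that $u$ itself must lie in the semiconcave class for Lemma \ref{stab} only makes explicit what the paper leaves implicit).
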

\begin{proof}
	By  \eqref{num:nearby_1}, it follows that	$\normdiscrete{\psi^\D(t)}\le C$, $t\in [0,T]$,  and by \eqref{num:nearby_2}	
	\begin{align*}
		&\|u( t)-u^\D(t)\|_{L^1_x}	\le \|u( t)-\psi^\D(t)\|_{L^1_x}+ \|\psi^\D(t)-u^\D( t)\|_{L^1_x}\\
		&\le \|u(  t)-\psi^\D( t)\|_{L^1_x}+  C\D  
	\end{align*}
		By Lemma \ref{stab} and \eqref{num:nearby_1}, we have for $t\in [0,T]$  
		\begin{equation}\label{num:proof1}
			\|u(  t)-\psi^\D( t)\|_{L^1_x}\le C\left(	\|u( 0)-\psi^\D(0)\|_{L^1_x}+\|\partial_t\psi^\D+H(D\psi^\D)\|_{L^1_x}\right).
	\end{equation}

	By \eqref{num:consistent} and \eqref{num:nearby_2}, we   estimate 
	\begin{equation}\label{num:proof2}
		\begin{split}
			\|u( 0)-\psi^\D( 0)\|_{L^1_x}&\le \|u( 0)-u^\D( 0)\|_{L^1_x}+\|u^\D( 0)-\psi^\D(0)\|_{L^1_x}\le  C\D 	.
		\end{split}
	\end{equation}
	Moreover, by \eqref{num:trunc_error}, \eqref{num:consistent} and \eqref{num:nearby_2} we have 
	\begin{equation}\label{num:proof3}
		\begin{split}
			\|\partial_t\psi^\D +H(D\psi^\D)\|_{L^1_x}&\le \|\partial_tu^\D +H(Du^\D)\|_{L^1_x}+	\|\partial_t(u^\D-\psi^\D)\|_{L^1_x}\\
			&+L_H\|D(u^\D-\psi^\D)\|_{L^1_x}\le C\D
		\end{split}
	\end{equation}
	Replacing \eqref{num:proof2} and \eqref{num:proof3} in \eqref{num:proof1}, we get
	\eqref{num:est_Lp}.
\end{proof}
Note that, for the estimate \eqref{num:est_Lp}, the family of approximate solutions $\{u^\D\}$ is not required to satisfy a  uniform semiconcavity estimate, as it fails in general for numerical schemes. Instead, it is sufficient to find a family $\{\psi^\D\}$, uniformly semiconcave in $\D$, which is ``close" in a certain sense to that given by the Godunov scheme.
In this context, the key property is the following discrete semiconcavity  introduced in \cite{LinTadmor}: a family ${u^\D}$, $\D>0$, is said semiconcave stable if there exists $k(t)\in L^1(0,T)$ such that for all $h\ge h_0(\D)>0$, there holds
\[
D^2_{h,\xi}u(x,t)\le k(t)\qquad \forall |\xi|=1.
\]
The previous property allows the construction of a family $\{\psi^\D\}$ satisfying   \eqref{num:nearby_1}-\eqref{num:nearby_2} for several examples of Godunov schemes as described in \cite[Section 3]{LinTadmor} to which we refer for further details.

\section{Qualitative properties for Hamilton-Jacobi equations}
\subsection{Regularizing effects for Cauchy problems in the whole space $\R^n$}\label{sec;regeff}
\subsubsection{Hamiltonians satisfying \eqref{H4}}
We consider in this section Lipschitz regularizing effects for  the Hamilton-Jacobi equation
\begin{equation}\label{hjregularizing}
\begin{cases}
\partial_t u_\eps-\eps \Delta u_\eps+H(Du_\eps)=0&\text{ in }\R^n\times(0,\infty),\\
u(x,0)=u_0(x)&\text{ in }\R^n,
\end{cases}
\end{equation}
namely we inquire whether the solution of the Cauchy problem for  the Hamilton-Jacobi equation is smoother, as time evolves, than the initial condition $u_0$. We prove the following
\begin{thm}\label{regLip}
Let $u_0$ be bounded and assume that $H$ satisfies \eqref{H4} with $\widetilde{C}_{H,4}=0$, $\gamma\leq2$ and $f\equiv 0$. Let $u_\eps$ be any bounded classical solution to \eqref{hjregularizing}. Then, for any $\delta>0$ we have $u_\eps\in W^{1,\infty}(\R^n\times(\delta,T))$ and, in particular, the following a priori estimate holds
\[
\|Du(t)\|_{L^\infty(\R^n)}\leq C\|u\|_{L^\infty(Q)}^{\frac{1}{\gamma}}t^{-\frac1\gamma}
\]
for a positive constant $C>0$.
\end{thm}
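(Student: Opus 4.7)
The strategy is to combine the semiconcavity-for-positive-times estimate of Theorem \ref{semic>0} with the following elementary Taylor-type observation, which converts a one-sided Hessian bound into a gradient bound in terms of the oscillation of the function.

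Let me first record the key pointwise lemma: if $w \in C^1(\R^n)$ is bounded and satisfies $D^2 w \leq K I_n$ in the sense of distributions for some $K > 0$, then
\[
\|Dw\|_{L^\infty(\R^n)}^2 \leq 4K \|w\|_{L^\infty(\R^n)}.
\]
Indeed, at any point $x$ the semiconcavity inequality gives, for $e = Dw(x)/|Dw(x)|$ and $s > 0$,
\[
w(x - s e) \leq w(x) - s |Dw(x)| + \tfrac{K}{2} s^2,
\]
and optimizing in $s$ at $s = |Dw(x)|/K$ yields $|Dw(x)|^2 \leq 2K(w(x) - w(x-se)) \leq 4K\|w\|_\infty$.

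With this in hand, the main argument proceeds as follows. First I would reduce to the case of a smooth Lipschitz initial datum via mollification, $u_0^\delta = \eta_\delta \star u_0$, so that the solution $u_\eps^\delta$ of \eqref{hjregularizing} is classical and $Du_\eps^\delta \in L^\infty(Q)$ (with a bound a priori depending on $\|Du_0^\delta\|_\infty$); crucially, the final estimate to be derived will be independent of this Lipschitz constant, so we can pass to the limit as $\delta \downarrow 0$ by stability/uniqueness for \eqref{hjregularizing}. Second, I would apply Theorem \ref{semic>0} (with $\widetilde C_{H,4} = 0$, so $C_2 = 0$) to get, for every unit $\xi \in \R^n$ and $t \in (0,T]$,
\[
(u_\eps^\delta)_{\xi\xi}(x,t) \leq \frac{C_1}{t} \|Du_\eps^\delta\|_{L^\infty(\R^n \times (0,t])}^{2-\gamma},
\]
so that at each time $t$, $u_\eps^\delta(\cdot,t)$ is semiconcave with constant $K(t) = C_1 t^{-1} \|Du_\eps^\delta\|_{L^\infty(\R^n \times (0,t])}^{2-\gamma}$. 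Applying the pointwise lemma to $w = u_\eps^\delta(\cdot, t)$ and using $\|u_\eps^\delta\|_\infty \leq \|u_0\|_\infty$ (by the maximum principle, since $H(0) = 0$), I obtain
\[
\|Du_\eps^\delta(t)\|_\infty^2 \leq \frac{4C_1}{t} \|Du_\eps^\delta\|_{L^\infty(\R^n \times (0,t])}^{2-\gamma} \|u_0\|_\infty.
\]
Taking the supremum over $s \in (0,t]$ on the left (and monotonicity of the right-hand side in $t$) gives the self-improving inequality $M(t)^\gamma \leq 4 C_1 \|u_0\|_\infty / t$ where $M(t) := \|Du_\eps^\delta\|_{L^\infty(\R^n \times (0,t])}$, whence the desired bound $\|Du_\eps^\delta(t)\|_\infty \leq (4C_1)^{1/\gamma} \|u_0\|_\infty^{1/\gamma} t^{-1/\gamma}$. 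Letting $\delta \downarrow 0$ concludes the proof.

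The main obstacle is the apparent circularity in Step 2: the semiconcavity bound already contains $\|Du\|_\infty^{2-\gamma}$, so one cannot directly plug in the required pointwise inequality without first guaranteeing $M(t) < \infty$. This is exactly why the mollification step is needed: it furnishes a (possibly very large) a priori finite Lipschitz constant, which makes the bootstrap rigorous; only after inverting the resulting inequality does one discover that the bound is in fact universal in $\delta$, allowing the limit. A secondary technical point is ensuring that $\|u_\eps\|_\infty \leq \|u_0\|_\infty$ without assuming \eqref{H1}; since $H(0) = 0$ is built into the standing assumptions of Section 3 and $f \equiv 0$, constants are sub- and supersolutions, so the comparison principle for the viscous problem gives this bound directly.
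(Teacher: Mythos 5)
Your proposal follows essentially the same route as the paper's proof of Theorem \ref{regLip}: the semiconcavity-for-positive-times bound of Theorem \ref{semic>0} (with $\widetilde{C}_{H,4}=0$) is combined with the one-sided interpolation inequality $\|Dw\|_{L^\infty}^2\le C\|(D^2w)^+\|_{L^\infty}\|w\|_{L^\infty}$, and the resulting factor $\|Du\|_\infty^{2-\gamma}$ is absorbed into the left-hand side using $1<\gamma\le 2$. The differences are cosmetic or extra care: you reprove the interpolation inequality by a Taylor-plus-optimization argument instead of quoting \cite{L82book} (getting the constant $4$ in place of $2$), you mollify $u_0$ to guarantee an a priori finite Lipschitz constant before bootstrapping, and you use the comparison principle to replace $\|u\|_\infty$ by $\|u_0\|_\infty$; none of this changes the substance.

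One step deserves a concrete warning: the absorption, as you justify it, does not literally work. For $s\le t$ your inequality reads $\|Du(s)\|_\infty^2\le 4C_1 s^{-1}M(s)^{2-\gamma}\|u_0\|_\infty$, and the right-hand side is not monotone in $s$ (the factor $s^{-1}$ blows up as $s\downarrow 0$), so taking the supremum over $s\in(0,t]$ does not yield $M(t)^\gamma\le 4C_1\|u_0\|_\infty/t$: the supremum of the right-hand side is not controlled by its value at $s=t$. To be fair, this is exactly the ``rearranging the terms'' step in the paper, where the space-time norm $\|Du\|_\infty$ coming from Theorem \ref{semic>0} is treated as if it were $\|Du(t)\|_{L^\infty_x}$ at the same time $t$. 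A clean way to close the loop is to keep the time integral in the proof of Theorem \ref{semic>0}, namely $\tau^2 u_{\xi\xi}(\cdot,\tau)\le C\int_0^\tau(\delta+\|Du(s)\|_{L^\infty_x}^2)^{\frac{2-\gamma}{2}}\,ds$, and to run the absorption on the weighted quantity $A(t):=\sup_{0<s\le t}s^{1/\gamma}\|Du(s)\|_{L^\infty_x}$, which is finite for your mollified problem: since $\int_0^\tau s^{-(2-\gamma)/\gamma}\,ds$ converges for $\gamma>1$ and the powers of $\tau$ cancel exactly, the interpolation inequality gives $A(t)^2\le C\|u\|_\infty A(t)^{2-\gamma}$, hence $A(t)^\gamma\le C\|u\|_\infty$, which is the claimed estimate; your mollification and limit $\delta\downarrow 0$ then go through as you describe.
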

\begin{proof}
	We drop the subscript $\eps$ for simplicity of notation. By Thereom \ref{semic>0}, we have for all $\tau\in(0,\infty)$
	\[
	u_{ee}\leq \frac{C(\delta +\|Du\|_{\infty}^2)^\frac{2-\gamma}{2}}{t}=:C_0(t).
	\]
The one-side interpolation inequality, cf. \cite{L82book},
	\[
	\|Du(t)\|_{L^\infty(\R^n)}\leq \sqrt{2}\|(D^2u(t))^+\|_{L^\infty(\R^n)}^\frac12\|u(\cdot,t)\|_{L^\infty(\R^n)}^\frac12
	\]
	implies for a.e. $t\in(0,T)$
	\[
	\|Du(t)\|_{L^\infty(\R^n)}\leq (2\|u(t)\|_{L^\infty(\R^n)}C_0(t))^\frac12=\sqrt{2C}\|u\|_{L^\infty(Q)}^\frac12(\delta +\|Du\|_{L^\infty(Q)}^2)^\frac{2-\gamma}{4}t^{-\frac12}.
	\] 
	Rearranging the terms this implies
	\[
	\|Du(t)\|_{L^\infty(\R^n)}\leq 2^\frac1\gamma C^\frac{1}{\gamma}\|u\|_{L^\infty(Q)}^{\frac{1}{\gamma}}t^{-\frac1\gamma}
	\]
	and concludes the proof.
\end{proof}

\begin{rem}\label{Nwave}
Recalling the relation among Hamilton-Jacobi equations and conservation laws, one can exploit the one side interpolation inequality due to E. Tadmor \cite{LinTadmor}, along with the Oleinik one-side Lipschitz condition from Remark \ref{conlaws}, to conclude for the solution $u$ of the conservation law
\[
\partial_t u+(F_i(u))_{x_i}=0
\]
the time-decay in sup-norm for large times due to R. Diperna
\[
\|u(\cdot,t)\|_{\infty}\lesssim t^{-\frac12},
\]
see e.g. Theorem 5 and 6 in Section 3.4 of \cite{EvansBook}, and the references therein.
\end{rem}
\subsubsection{Coercive Hamiltonians satisfying \eqref{H1}}\label{coercive}
In this section we discuss regularizing effects for general coercive Hamiltonians as initiated in \cite{Laurencot,Lions85aa}. The main result is the following\begin{thm}
Let $u_\eps$ be a classical solution of \eqref{hjregularizing} with $H\in W^{1,\infty}_{\mathrm{loc}}$ satisfying \eqref{H1} with $\widetilde{C}_{H,1}=0$. Then
\[
\|Du_\eps(\tau)\|_{L^\infty(\R^n)}\leq \frac{1}{C_{H,1} \tau^\frac1\gamma}\left(\mathrm{osc}_{\R^n\times(0,T)}u_\eps\right)^\frac1\gamma,\ \tau\in(0,T).
\]
\end{thm}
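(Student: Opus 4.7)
The plan is to combine the gradient-integral bound from Lemma \ref{cross} with the choice of time-weight $\chi(t)=t$, in the spirit of the duality argument used in Theorem \ref{semic>0}. Fix $\tau\in(0,T)$, $x_0\in\R^n$, and a unit vector $\xi\in\R^n$; we drop the $\eps$-subscript. Differentiating the equation along $\xi$ yields, for $v:=u_\xi$,
\[
\partial_t v-\eps\Delta v+D_pH(Du)\cdot Dv=0\quad\text{in }Q,
\]
so that $w(x,t):=t\,v(x,t)$ satisfies
\[
\partial_t w-\eps\Delta w+D_pH(Du)\cdot Dw=v,\qquad w(\cdot,0)=0.
\]
The multiplication by $t$ is what makes the initial trace vanish, allowing us to avoid any a priori hypothesis on $Du_0$.

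Next, choose $\rho_\tau\in C_c^\infty(\R^n)$ with $\rho_\tau\geq 0$, $\int_{\R^n}\rho_\tau\,dx=1$, and let $\rho$ be the corresponding solution of the adjoint problem \eqref{FK_test} on $[0,\tau]$. Testing the equation for $w$ against $\rho$ and integrating by parts (the boundary terms vanish under the standing integrability assumptions, using the coercivity \eqref{H1} with $\widetilde{C}_{H,1}=0$ through Lemma \ref{well}),
\[
\tau\int_{\R^n}v(x,\tau)\rho_\tau(x)\,dx=\int_0^\tau\!\int_{\R^n}v\,\rho\,dxdt.
\]
Applying Hölder's inequality with exponents $\gamma$ and $\gamma'$, together with the mass bound $\int_{\R^n}\rho(\cdot,t)\,dx\leq 1$, and using $|v|\leq|Du|$, we obtain
\[
\int_0^\tau\!\int_{\R^n}v\,\rho\,dxdt \leq \left(\int_0^\tau\!\int_{\R^n}\rho\,dxdt\right)^{1/\gamma'}\!\left(\int_0^\tau\!\int_{\R^n}|Du|^\gamma\rho\,dxdt\right)^{1/\gamma}\leq \tau^{1/\gamma'}\left(\int_0^\tau\!\int_{\R^n}|Du|^\gamma\rho\,dxdt\right)^{1/\gamma}.
\]

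To close the estimate, we invoke Lemma \ref{cross}: with $f\equiv 0$ and $\widetilde{C}_{H,1}=0$, formula \eqref{Dugamma} reduces to
\[
\int_0^\tau\!\int_{\R^n}|Du|^\gamma\rho\,dxdt\leq \frac{1}{C_{H,1}}\Bigl(\int_{\R^n}u(x,\tau)\rho_\tau\,dx-\int_{\R^n}u(x,0)\rho_0\,dx\Bigr)\leq \frac{\mathrm{osc}_{\R^n\times(0,T)}u}{C_{H,1}},
\]
where in the last step we used that $\rho_\tau$ and $\rho_0$ are probability densities. Combining the two bounds,
\[
\tau\int_{\R^n}v(x,\tau)\rho_\tau(x)\,dx\leq \tau^{1/\gamma'}\left(\frac{\mathrm{osc}_{\R^n\times(0,T)}u}{C_{H,1}}\right)^{1/\gamma}.
\]
Since $v(\cdot,\tau)$ is continuous, letting $\rho_\tau$ concentrate at $x_0$ and using $1/\gamma'-1=-1/\gamma$ yields the pointwise estimate $u_\xi(x_0,\tau)\leq \tau^{-1/\gamma}\bigl(C_{H,1}^{-1}\mathrm{osc}\,u\bigr)^{1/\gamma}$. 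Taking the supremum over $x_0$ and over $|\xi|=1$ (the opposite-sign bound is obtained by replacing $\xi$ by $-\xi$) gives the statement.

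The main technical obstacle is the conservation-of-mass identity for the adjoint variable $\rho$ on the whole space $\R^n$, which is what makes Hölder work with the clean constant; this requires the coercivity \eqref{H1} together with the localization argument of Lemma \ref{well}. A minor secondary point is the approximation to a Dirac mass in $\rho_\tau$, which is unproblematic thanks to the continuity of $Du$ for classical solutions.
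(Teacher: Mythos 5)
Your proof is correct and follows essentially the same route as the paper's: differentiate along a direction, multiply by $t$ to kill the initial trace, test against the adjoint density, apply H\"older with exponents $\gamma,\gamma'$ together with the mass bound, and close with \eqref{Dugamma} specialized to $f\equiv 0$, $\widetilde{C}_{H,1}=0$. Your constant $C_{H,1}^{-1/\gamma}$ is in fact the one the computation actually yields (and the one consistent with Remark \ref{firstorder}), so no issues there.
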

\begin{proof}
We first differentiate the PDE along a unitary direction $e\in\R^n$ to find the equation for $v=(u_\eps)_e$
\[
\partial_t v-\eps \Delta v+D_pH(Du)\cdot Dv=0.
\]
From now on we drop the subscript $\eps$. Then, the function $w=tv$ solves
\[
\partial_t w-\eps\Delta w+D_pH(Du)\cdot Dw=u_e.
\]
By duality we find, using $u_e=Du\cdot e$, the H\"older's inequality and the bound \eqref{Dugamma} (applied with $f=0$ and $\widetilde{C}_{H,1}=0$)
\begin{multline*}
\int_{\R^n}w(\tau)\rho_\tau(x)\,dx\leq \iint_{Q}|u_e|\rho\,dxdt\leq \iint_Q|Du|\rho\,dxdt\\\leq \left(\iint_Q|Du|^\gamma\rho\,dxdt\right)^\frac1\gamma\left(\iint_Q\rho\,dxdt\right)^\frac{1}{\gamma'}
\leq \frac{1}{C_{H,1}}\left(\mathrm{osc}_{\R^n\times(0,T)}u\right)^\frac1\gamma\  \tau^{\frac{1}{\gamma'}}.
\end{multline*}
This implies 
\[
\tau\|Du(\tau)\|_\infty\leq \frac{\tau^{\frac{1}{\gamma'}}}{C_{H,1}}\left(\mathrm{osc}_{\R^n\times(0,T)}u\right)^\frac1\gamma\implies \|Du(\tau)\|_\infty\leq \frac{1}{{C_{H,1}}\tau^\frac1\gamma}\left(\mathrm{osc}_{\R^n\times(0,T)}u\right)^\frac1\gamma.
\]
\end{proof}
\begin{rem}\label{firstorder}
The above result, being independent of the diffusion, applies even to problems with nonlocal diffusion driven by $(-\Delta)^s$, provided that the solution of the dual problem satisfies $\int_{\R^n}\rho(t)\leq1$. This allows to complete some results in \cite{Silvestre}, see Remark 7.3 therein. In particular, it applies to the first-order equation $\partial_t u+H(Du)=0$ after the vanishing viscosity regularization. In the case $H(Du)=\frac{|Du|^\gamma}{\gamma}$, $\gamma>1$, we have $C_H=\frac{1}{\gamma'}$ and find the estimate (independent of the viscosity)
\[
\|Du(\tau)\|_{L^\infty}\leq \frac{(\gamma')^\frac1\gamma}{\tau^\frac1\gamma}(\mathrm{osc}_{\R^n\times(0,T)}u)^\frac1\gamma.
\]
The Hopf-Lax formula shows that the estimate is attained as an equality, as observed in Remark (iii), p. 286, of \cite{Lions85aa}.
\end{rem}
\subsection{Liouville-type theorems for first-order Hamilton-Jacobi equations}\label{sec;lio}
In this section we establish some non-existence properties for generalized (ancient) solutions to the parabolic problem
\begin{equation}\label{ancient}
\partial_t u+H(Du)=0\text{ in }\R^n\times(-\infty,0),
\end{equation}
when $H$ satisfies (H4) with $\gamma\in(1,2]$.  Our main result generalizes the Liouville type property appeared in Section 1 of \cite{K67II}. These results are rather unnatural due to the absence of diffusive terms in the equation. We may assume without loss of generality that $H(0)=0$, otherwise replace $u$ with $\bar u$ solving
\[
\bar u=u-tH(0).
\]
The result reads as follows:
\begin{thm}\label{lio}
Let $u$ be a solution of \eqref{ancient} with $H(0)=0$ and satisfying the one-side decay condition
\[
u(x,t)\geq -|x|\mu(|x|)+K(t),\ \mu(r)\geq0, \mu(r)\to0\text{ as }r\to\infty,
\]
for a bounded function $K$. Then $u$ must be constant.
\end{thm}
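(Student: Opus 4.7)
The plan is to show that $u(\cdot, t)$ is concave on $\R^n$ for every $t < 0$; the sublinear lower bound then forces $u$ to be independent of $x$, and the equation \eqref{ancient} propagates this to constancy in $t$.

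\emph{Step 1 (spatial concavity via a vanishing semiconcavity modulus).} Fix $\tau < 0$ and apply Theorem \ref{semic>0} to the time-shifted solution $v(x, s) := u(x, s + \tau)$ on $\R^n \times (0, -\tau)$. For every unit vector $\xi$ and every $t \in (\tau, 0)$ this gives
\begin{equation*}
u_{\xi\xi}(x, t) \leq \frac{C_1 \|Du\|_{L^\infty(\R^n \times (\tau, 0))}^{2 - \gamma}}{t - \tau} + C_2 (t - \tau).
\end{equation*}
Under \eqref{H4} with $\widetilde{C}_{H,4} = 0$, one has $C_2 = 0$. Provided $\|Du\|_\infty^{2-\gamma}$ grows sublinearly in $|\tau|$ as $\tau \to -\infty$—a bound secured by the Lipschitz regularization of Theorem \ref{regLip} or by the viscosity-independent estimate of Section \ref{coercive} applied to $v$, combined with the lower bound on $u$—the right-hand side tends to $0$. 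Hence $u_{\xi\xi}(\cdot, t) \leq 0$ in $\mathcal{D}'(\R^n)$ for every unit $\xi$, i.e., $u(\cdot, t)$ is concave.

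\emph{Step 2 (sublinearity forces constancy).} Suppose $Du(x_0, t) = p \neq 0$ at some $(x_0, t)$. Concavity gives
\begin{equation*}
u(x_0 - sp, t) \leq u(x_0, t) - s|p|^2, \quad s > 0,
\end{equation*}
while the hypothesis yields $u(x_0 - sp, t) \geq -|x_0 - sp|\,\mu(|x_0 - sp|) + K(t) = -o(s)$ as $s \to \infty$, since $\mu(r) \to 0$ and $K$ is bounded. For $s$ large this contradicts the linear descent $-s|p|^2$, so $Du(\cdot, t) \equiv 0$ and $u(x,t) = c(t)$. Plugging into \eqref{ancient} yields $c'(t) + H(0) = 0$; since $H(0) = 0$, $c$ is constant, and so is $u$.

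\emph{Main obstacle.} The delicate step is the uniform-in-$\tau$ Lipschitz control in Step 1: a direct appeal to Theorem \ref{regLip} would require $u$ to be globally bounded, which is not assumed. The natural resolution is a bootstrap built on the one-side interpolation used in Theorem \ref{regLip}: the semiconcavity modulus $M(\tau)$ (which is $o(1)$ as $\tau \to -\infty$ once $C_2 = 0$) together with the sublinear lower bound controls $u$ from both sides on balls of radius $R$, giving a bound on $|Du(x_0, t)|$ that can be made arbitrarily small by first letting $\tau \to -\infty$ and then $R \to \infty$, thereby closing the loop without assuming boundedness of $u$. A secondary subtlety is that $C_2 = 0$ requires $\widetilde{C}_{H,4} = 0$; in the general case the $C_2 (t - \tau)$ term has to be dominated by a sharper use of the full coercivity in \eqref{H4}, or absorbed by strengthening the decay on $u$.
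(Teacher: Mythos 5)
Your argument is essentially the paper's: shift time, apply Theorem \ref{semic>0} on the layer, send the shift to infinity to obtain spatial concavity, and conclude via the convex-analysis step (which the paper cites as a lemma of Kruzhkov and you prove directly) together with $H(0)=0$. The ``main obstacle'' you describe does not actually arise in the paper's setting, since the theorem concerns generalized (ancient) solutions in the sense of Definition \ref{def;weakae}, which are Lipschitz by definition, so $\|Du\|_{L^\infty}$ is a fixed finite constant independent of the time shift and no bootstrap is needed; on the other hand, your remark that one needs $\widetilde{C}_{H,4}=0$ (so that $C_2=0$ and the term $C_2(t-\tau)$ does not blow up as $\tau\to-\infty$) is correct and is tacitly assumed in the paper's proof, which writes the semiconcavity modulus simply as $C/(T+t)$.
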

\begin{rem}
The previous decay condition is satisfied if $u$ is bounded from below or has   sublinear decay at infinity. In such a case $\mu(r)=r^{\alpha-1}$, $r=|x|$, $\alpha\in(0,1)$, satisfies the condition $\mu(r)\to0\text{ as }r\to\infty$ with $\mu(r)\geq0$.
\end{rem}
To prove the vanishing property, we premise the following lemma of convex analysis taken from Lemma 2 in Section 1 of \cite{K67II}. It extends the classical result saying that a concave function bounded from below must be constant, allowing for a more general unilateral decay condition.
\begin{lemma}
Let $u:\R\to\R$ be a concave function. If 
\[
u(s)\geq -|s|\mu(|s|)+C,\ \mu(r)\geq0,\ \mu(r)\geq0,\ \mu(r)\to0\text{ as }r\to\infty
\]
then $u$ is constant.
\end{lemma}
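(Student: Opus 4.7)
The plan is to exploit the defining property of concavity through one-sided derivatives and argue by contradiction: if $u$ were not constant, concavity would force linear decay at $+\infty$ or $-\infty$, contradicting the sublinear lower bound $u(s)\geq -|s|\mu(|s|)+C$.

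First I would recall that a concave function $u:\R\to\R$ admits right and left derivatives $u'_+(s)$ and $u'_-(s)$ everywhere, and that the map $s\mapsto u'_+(s)$ is non-increasing, with the chord inequality $u(t)\leq u(s)+u'_+(s)(t-s)$ valid for every $s,t\in\R$. If $u$ is not constant, there must exist $s_0\in\R$ at which $u'_+(s_0)\neq 0$; otherwise $u'_+\equiv 0$ and the function reduces to a constant by integration.

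I would then split into two cases. In the case $u'_+(s_0)>0$, the chord inequality applied for $s<s_0$ yields
\[
u(s)\leq u(s_0)+u'_+(s_0)(s-s_0),
\]
so that, dividing by $|s|=-s$ and letting $s\to-\infty$, one obtains $\limsup_{s\to-\infty}u(s)/|s|\leq -u'_+(s_0)<0$. On the other hand, the hypothesis implies $u(s)/|s|\geq -\mu(|s|)+C/|s|$, whose right-hand side tends to $0$ as $|s|\to\infty$. This is a contradiction. In the case $u'_+(s_0)<0$, the chord inequality applied for $s>s_0$ gives $u(s)\leq u(s_0)+u'_+(s_0)(s-s_0)$, hence $\limsup_{s\to+\infty}u(s)/|s|\leq u'_+(s_0)<0$, again contradicting the lower bound.

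Since both non-trivial cases lead to a contradiction, $u'_+\equiv 0$, so $u$ is constant, and the lemma follows. I do not anticipate any serious obstacle: the only delicate point is ensuring the existence of a point $s_0$ with $u'_+(s_0)\neq 0$ whenever $u$ is non-constant, which is immediate because a concave function with vanishing right derivative everywhere is forced to be constant via the mean-value property of monotone functions.
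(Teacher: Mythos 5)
Your proof is correct: the supporting-line inequality $u(t)\leq u(s_0)+u'_+(s_0)(t-s_0)$ (valid for all $t$, on both sides of $s_0$, since $u'_-(s_0)\geq u'_+(s_0)$) forces a non-constant concave function to decay at least linearly as $s\to+\infty$ or $s\to-\infty$, which the sublinear lower bound excludes. The paper does not prove this lemma itself but defers to Lemma 2 in Section 1 of Kruzhkov's work, and your argument is exactly the standard one that reference relies on, so nothing further is needed.
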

\begin{proof}[Proof of Theorem \ref{lio}]
The proof relies on showing that in the long-time regime the solution of \eqref{ancient} becomes concave. We can view a solution to \eqref{ancient} as a solution of the problem on the layer $\{-T\leq t\leq0\}$ by letting $T\to\infty$ with initial condition $u(-T)=u(x,-T)$. By Theorem \ref{semic>0} we have
\[
D^2u\,e\cdot e\leq \frac{C}{T+t}.
\]
We fix $x_0,t_0$ and restrict $u$ on the line $u(x_0,t_0+e s)$. Letting $T\to\infty$ in the second-order estimate we conclude that $u$ is a concave function satisfying the growth conditions of the lemma. This implies that $u$ must be constant in space, i.e. $u(x,t)=k(t)$. Then, by the equation we have
\[
\partial_t u=-H(Dk(t))=-H(0)=0,
\]
which implies that $u$ is also constant in the time-variable.
\end{proof}
\begin{rem}
The Liouville-type result for the evolutive problem \eqref{ancient} is determined by the nonlinearity. Indeed, even for the simplest heat equation $\partial_t u-u_{xx}=0$ in $\R^2$ or $\R\times(-\infty,0]$ the Liouville property does not hold for solutions satisfying only one-side bounds: the function $u(x,t)=e^{x+t}$ is caloric, bounded from below and it is not a constant.\\
Furthermore, Remark 2 in \cite{K67II} shows that the lower bound on $u$ cannot be replaced by an upper bound (e.g. $u\leq0$). Other polynomial Liouville theorems can be obtained following the lines of \cite{K67II}.
\end{rem}
\begin{rem}
Since the semiconcavity estimates of Theorem \ref{semic>0} are independent of $\eps>0$, one can prove with the same proof of Theorem \ref{lio} a Liouville theorem for ancient solutions to the following model viscous problem
\[
\partial_t u-\Delta u+|Du|^\gamma=0\quad \text{ in }\R^n\times(-\infty,0).
\]
P. Souplet and Q.S. Zhang \cite[Theorem 3.3]{SZ} proved by the Bernstein method that any classical solution to the above equation such that $|u(x,t)|=o(|x|+|t|^\frac1\gamma)$, $\gamma\in(1,2]$, as $|x|+|t|^\frac1\gamma\to\infty$, must be a constant. In particular, any bounded solution to the above equation is a constant. Our result, instead, requires only a one-side condition in space, but asks an a priori sublinear decay in $x$. Clearly, if $u$ is bounded, Theorem \ref{lio} leads to the same conclusion as that in \cite[Theorem 3.3]{SZ}.
\end{rem}

\appendix

\section{Well-posedness and stability estimates for equations with divergence-type terms}
\subsection{Some useful properties of advection-diffusion equations with bounded drifts}
We consider here some preliminary properties of the Fokker-Planck equation
\begin{equation}\label{adjoint}
	\begin{cases}
		-\partial_{t} \rho - \eps \Delta \rho +\textrm{div} (b(x,t) \rho) = 0, &\text{ in }\R^n\times(0,\tau),
		\\
		\rho(x, \tau) = \rho_{\tau}(x), &\text{ in }\R^n.
	\end{cases}
\end{equation}
We have the following
\begin{lemma}\label{well}
Assume $b\in L^\infty_{\mathrm{loc}}(\R^n\times(0,\tau))$. Then there exists a distributional solution $\rho$ to \eqref{adjoint} verifying $\int_{\R^n}\rho(t)\,dx\leq1$. If in addition $b\in L^2(\R^n\times(0,\tau))$, $|b|\in L^k(\rho\,dxdt)$ for some $k>1$, $\eps>0$, we have a unique weak (energy) solution of \eqref{adjoint} which satisfies $\rho\in L^\infty([0,\tau];L^1(\R^n))$. Moreover, if $\|\rho_\tau\|_1=1$ and $\rho_\tau\geq0$, we have $\|\rho(t)\|_1=1$ for all $t\in[0,\tau)$ and $\rho\geq0$ in $Q$.
\end{lemma}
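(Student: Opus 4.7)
The plan is to argue by approximation. I would regularize the drift by setting $b_\delta = (b\ast\eta_\delta)\zeta_\delta$, with $\eta_\delta$ a standard mollifier and $\zeta_\delta$ a smooth cutoff localizing to a large ball, and approximate the terminal datum by a sequence $\rho_\tau^\delta\in C_c^\infty(\R^n)$, $\rho_\tau^\delta\ge 0$, with $\|\rho_\tau^\delta\|_1\le\|\rho_\tau\|_1$. Classical linear parabolic theory (e.g.\ Ladyzhenskaya–Solonnikov–Ural'tseva) provides a unique smooth solution $\rho_\delta$ of the regularized backward problem on $\R^n\times(0,\tau)$; the weak maximum principle, applied to the backward equation with smooth coefficients, yields $\rho_\delta\ge 0$, and the compact support of $b_\delta$ combined with the sufficient decay of $\rho_\delta$ makes integration by parts on $\R^n$ admissible, giving
\begin{equation*}
\frac{d}{dt}\int_{\R^n}\rho_\delta(x,t)\,dx = 0,
\end{equation*}
so that $\int_{\R^n}\rho_\delta(x,t)\,dx = \|\rho_\tau^\delta\|_1 \le 1$ uniformly in $\delta$ and $t$.

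Passing to the limit $\delta\to 0$ is then performed via weak-$\ast$ compactness: the uniform $L^\infty_t L^1_x$ bound, combined with the bound on $\partial_t \rho_\delta$ in a negative Sobolev norm read off from the equation and the local boundedness of $b$, allows me to extract a subsequence converging to a nonnegative distributional solution $\rho$ on $\R^n\times(0,\tau)$ with $\int_{\R^n}\rho(t)\,dx\le 1$. This yields the first assertion of the lemma.

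For the second part, mass conservation on $\R^n$ is obtained by a cutoff argument. Let $\phi_R\in C_c^\infty(\R^n)$ with $\phi_R\equiv 1$ on $B_R$, $0\le\phi_R\le 1$, $|D\phi_R|\le C/R$, and $|\Delta\phi_R|\le C/R^2$. Testing the equation against $\phi_R$ and integrating by parts gives
\begin{equation*}
\frac{d}{dt}\int_{\R^n}\rho\,\phi_R\,dx = -\eps\int_{\R^n}\rho\,\Delta\phi_R\,dx - \int_{\R^n}(b\rho)\cdot D\phi_R\,dx.
\end{equation*}
The first term is $O(R^{-2})\|\rho\|_{L^\infty_t L^1_x}$ and vanishes as $R\to\infty$, while the second is controlled by H\"older's inequality: its time integral is bounded by $(C/R)\bigl(\iint|b|^k\rho\,dxdt\bigr)^{1/k}\bigl(\iint\rho\,dxdt\bigr)^{1/k'}$, which also tends to zero thanks to the assumption $|b|\in L^k(\rho\,dxdt)$, $k>1$. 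Letting $R\to\infty$ shows that $\int\rho(t)\,dx$ is constant in time, hence equal to $\|\rho_\tau\|_1=1$. Uniqueness of the weak energy solution follows by testing the equation for the difference $\tilde\rho$ of two solutions against $\tilde\rho$: the diffusive term $\eps\|D\tilde\rho\|_{L^2}^2$ absorbs the cross term $\int b\tilde\rho\cdot D\tilde\rho$ via Cauchy–Schwarz and Young, leading to a Gronwall estimate that forces $\tilde\rho\equiv 0$ since the terminal datum is zero.

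The main obstacle is precisely the mass-conservation step: without quantitative control on $|b|\rho$ at infinity the flux through large balls need not vanish, and the formal identity $\frac{d}{dt}\int\rho\,dx=0$ can fail. The integrability condition $|b|\in L^k(\rho\,dxdt)$ for some $k>1$ is the minimal requirement that makes the cutoff computation close, mirroring the classical Ambrosio–DiPerna–Lions hypotheses for well-posedness of continuity equations on the whole space.
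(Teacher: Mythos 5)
Your treatment of the central point of the lemma --- conservation of mass via the cutoff $\chi_R$, with the diffusive term of order $R^{-2}$ and the flux term controlled by H\"older's inequality through the hypothesis $|b|\in L^k(\rho\,dxdt)$, $k>1$ --- is exactly the argument in the paper. Where you differ is in the existence and uniqueness steps: the paper does not construct the distributional solution by hand but cites Theorem 6.2.2 of \cite{BKRS} for existence, Theorem 2.1 of \cite{ManitaShaposhnikov} for the sub-probability bound $\int\rho(t)\,dx\le1$, Proposition 3.10 of \cite{PorrARMA} (approximation by Cauchy--Dirichlet problems on expanding balls, testing against an approximation of the sign function) for the $L^\infty_t(L^1_x)$ estimate, and Theorem 3.7 of \cite{PorrettaUMI} for uniqueness of the weak energy solution. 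Your self-contained regularization of the drift and terminal datum, plus weak-$\ast$ compactness, is a legitimate alternative for the first assertion and arguably more transparent, though to pass to the limit in the product $b_\delta\rho_\delta$ you should note that local strong convergence of $\rho_\delta$ (from interior parabolic estimates and Aubin--Lions, using $\eps>0$) is what makes the distributional formulation survive.

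The one step that does not close as written is uniqueness. Testing the equation for the difference $\tilde\rho$ against $\tilde\rho$ produces the cross term $\int b\,\tilde\rho\cdot D\tilde\rho$, and absorbing it into $\eps\|D\tilde\rho\|_{L^2}^2$ by Young's inequality leaves $\eps^{-1}\iint|b|^2\tilde\rho^2$, which is controlled by $\|\tilde\rho\|_{L^2}^2$ only if $b\in L^\infty$. Under the stated hypotheses $b$ is merely in $L^2(\R^n\times(0,\tau))$ with $|b|\in L^k(\rho\,dxdt)$, and an energy solution need not be bounded, so the Gronwall argument does not apply directly; by parabolic interpolation one would need roughly $b\in L^{n+2}_{x,t}$ for this to work. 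This is precisely why the paper delegates uniqueness to the renormalization-type result of \cite{PorrettaUMI} rather than running the naive energy estimate. Either invoke such a result or add the hypothesis that makes your Young-inequality absorption legitimate.
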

\begin{proof}
The existence of a distributional solution with a locally bounded drift follows from Theorem 6.2.2 in \cite{BKRS}. Theorem 2.1 and Remark 2.2 in \cite{ManitaShaposhnikov} imply also that $\int_{\R^n}\rho(t)\,dx\leq1$ for all positive times. The equality case (i.e. the conservation of mass) requires a global property on the velocity field $b$. The estimate $\rho\in L^\infty(0,\tau;L^1(\R^n))$ follows by approximating the Cauchy problem in $\R^n$ via a Cauchy-Dirichlet problem on a sequence of expanding domains like $Q_R:=B_R(0)\times(0,\tau)$ and using Proposition 3.10-(i) in \cite{PorrARMA}, which is in turn based on testing the equation against an approximation of the sign function.   The uniqueness statement from Theorem 3.7 of \cite{PorrettaUMI} gives the existence and uniqueness of a weak energy solution.  We now prove the conservation of mass by means of a cut-off argument and this last bound. First, we apply Lemma 9.1.1 in \cite{BKRS} and choose $\chi \in C^{\infty}_{c}(\R^n)$ such that $\chi(x) = 1$ for any $x \in B_1$ and $\chi(x) = 0$ for any $x \in \R^n\backslash B_2$. Then, setting $\chi_R(x) = \chi\left(\frac{x}{R}\right)$ and testing the Fokker-Planck equation against such a function $\chi_R$ via the identity in \cite{BKRS} we get
\[
\int_{\R^n}\rho(t)\chi_R(x)\,dx+\iint_{Q}\eps\rho\Delta \chi_R(x)+\rho b\cdot D\chi_R(x)\,dxdt=\int_{\R^n}\rho(\tau)\chi_R(x)\,dx.
\]
We thus have
\[
\iint_{Q}\rho\Delta \chi_R(x)\,dxdt\leq \frac{1}{R^2}\iint_Q\rho\Delta\chi\,dxdt\leq \frac{C}{R^2}
\]
and
\begin{multline*}
\iint_{Q}\rho b\cdot D\chi_R(x)\,dxdt\leq \frac{1}{R}\iint_{Q}|b|\rho|D\chi|\,dxdt\\
\leq \frac{C}{R}\left(\iint_Q|b|^k\rho\,dxdt\right)^\frac{1}{k}\left(\iint_Q\rho\,dxdt\right)^\frac{1}{k'}\leq \frac{C}{R}\left(\iint_Q|b|^k\rho\,dxdt\right)^\frac{1}{k}\|\rho\|_{L^\infty(0,\tau:L^1(\R^n))}^\frac{1}{k'}. 
\end{multline*}
The conservation of the $L^1$ norm follows then from the dominated convergence theorem and the estimate of $\rho\in L^\infty(0,\tau;L^1(\R^n))$.
\end{proof}
\begin{rem}
When $b=-D_pH(Du_\eps)\in L^\infty_{\mathrm{loc}}(\R^n\times(0,\tau))$, where $u_\eps$ solves \eqref{hjintro}, the global condition $|b|\in L^{k}(\rho\,dxdt)$ with $k=\gamma'$ is satisfied under the assumption \eqref{H1}. Such a requirement is needed for the conservation of mass and is verified in view of Lemma \ref{cross}. Note that Lemma \ref{cross} itself requires to test against the solution $\rho$, but to deduce such a result we only need the existence of densities, cf. Theorem 6.2.2 of \cite{BKRS} valid under the sole assumption of local boundedness of the velocity field $b$, and the condition $\rho\in L^\infty(0,\tau;L^1(\R^n))$. Lemma \ref{cross} does not require the validity of the $L^1$ preserving property.
In case $b\in L^\infty(\R^n\times(0,\tau))$ or in the case of compact domains, the further integrability condition $|b|\in L^{k}(\rho\,dxdt)$ is no longer needed to prove the preservation of $L^1$ norms.
\end{rem}

\subsection{$L^r$ stability estimates}
In this section we consider the Cauchy problem
\begin{equation}\label{fp}
\begin{cases}
\partial_t \rho-(a_{ij}(x,t)\rho)_{x_ix_j}+\mathrm{div}(b(x,t)\rho)=0&\text{ in }Q\\
\rho(x,0)=\rho_0(x)&\text{ in }\R^n.
\end{cases}
\end{equation}
when $\mathrm{div}(b)$ is not divergence-free, and find sufficient conditions that guarantee the validity of $L^p$-stability estimates. 

The next result concerns the degenerate case, cf. Theorem 6.7.4 of \cite{BKRS}.
\begin{thm}\label{Krylov}
Let $q>1$, $b\in L^q(0,T;L^q_{\mathrm{loc}}(\R^n))$ and $a_{ij}\in L^\infty(0,T;W^{1,q}_{\mathrm{loc}}(\R^n))$ with $A\geq0$ (no strict parabolicity is needed). Let $r=\frac{q}{q-1}$ and suppose that
\begin{equation}
\left[(r-1)\left(\mathrm{div}(b)-\sum_{i,j}(a_{ij})_{x_ix_j}\right)\right]^-\in L^1(0,T;L^\infty(\R^n))
\end{equation}
along with $\rho_0\in L^r(\R^n)$. Then, there exists a solution $\rho$ of \eqref{fp} in $L^\infty(0,T;L^r(\R^n))$ and it holds the estimate.
\begin{equation*}
\sup_{t \in [0,T]} \int_{\R^n} |\rho(t, x)|^r\ dx \leq\|\rho_0\|_{L^r(\R^n)}e^{t\|(r-1)[\mathrm{div}(b)-\sum_{i,j}(a_{ij})_{ij}]^-\|_{L^1(0,T;L^\infty(\R^n))}}. 
\end{equation*}
\end{thm}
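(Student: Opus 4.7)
The plan is to derive the $L^r$ estimate via a standard energy method, testing the equation against $\phi'(\rho) = |\rho|^{r-2}\rho$ after first rewriting \eqref{fp} in divergence form. Observe that
\[
\partial_{ij}(a_{ij}\rho) - \mathrm{div}(b\rho) = \partial_i\bigl(a_{ij}\partial_j\rho + \tilde b_i\,\rho\bigr),\qquad \tilde b_i := \partial_j a_{ij} - b_i,
\]
so that $\mathrm{div}\,\tilde b = \partial_{ij}a_{ij} - \mathrm{div}(b)$, which is exactly the combination appearing in the hypothesis (with opposite sign). The choice $r = q/(q-1)$ becomes relevant at the passage-to-the-limit step, because it is precisely what renders $b\,|\rho|^{r-1}$ summable (by Hölder) once uniform $L^\infty_t L^r_x$ bounds are available.

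The core computation is as follows. For smooth, rapidly decaying $\rho$, multiplying the divergence-form equation by $r|\rho|^{r-2}\rho$, integrating over $\R^n$, and integrating by parts once yields, via the identity $|\rho|^{r-2}\rho\,\partial_i\rho = r^{-1}\partial_i(|\rho|^r)$ applied to the first-order part,
\[
\frac{d}{dt}\int_{\R^n}|\rho|^r\,dx = -r(r-1)\int_{\R^n}a_{ij}|\rho|^{r-2}\partial_i\rho\,\partial_j\rho\,dx - (r-1)\int_{\R^n}\bigl(\mathrm{div}\,b - \partial_{ij}a_{ij}\bigr)|\rho|^r\,dx.
\]
Since $(a_{ij})$ is positive semidefinite, the diffusion term is non-positive and can be discarded. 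For the remaining term, one bounds $-(r-1)(\mathrm{div}\,b - \partial_{ij}a_{ij})\le(r-1)[\mathrm{div}\,b - \partial_{ij}a_{ij}]^-$, obtaining
\[
\frac{d}{dt}\int_{\R^n}|\rho|^r\,dx \le \bigl\|(r-1)[\mathrm{div}\,b - \partial_{ij}a_{ij}]^-(t,\cdot)\bigr\|_{L^\infty(\R^n)}\int_{\R^n}|\rho|^r\,dx,
\]
and Gronwall's lemma delivers the claimed exponential estimate. Existence then follows by mollifying $b, a_{ij}, \rho_0$ to $b^\delta, a_{ij}^\delta, \rho_0^\delta$ (possibly adding a small viscous perturbation $\delta\Delta\rho$ to guarantee smooth decaying solutions $\rho^\delta$), running the a priori estimate uniformly in $\delta$, and extracting a weak-$*$ limit in $L^\infty(0,T;L^r(\R^n))$; weak lower semicontinuity of the norm inherits the bound, and linearity of the equation allows identification of the limit as a distributional solution of \eqref{fp}.

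The main obstacle is the rigorous justification of the formal computation. Three points deserve attention. First, $\rho\mapsto|\rho|^{r-2}\rho$ is only $C^1$ for $r\ge 2$, so for $r\in(1,2)$ I would use the regularization $(\rho^2+\eta)^{(r-2)/2}\rho$ and send $\eta\downarrow 0$ via monotone/dominated convergence. Second, the integration by parts on $\R^n$ requires enough spatial decay; this can be handled via a cut-off $\chi_R$ that disappears in the limit, in the same spirit as the mass-conservation argument of Lemma \ref{well}. Third, and most delicately, the assumption concerns only the \emph{combined} distribution $\mathrm{div}\,b - \partial_{ij}a_{ij}$, not its individual summands, which need not be functions. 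The convergence of $[\mathrm{div}\,b^\delta - \partial_{ij}a_{ij}^\delta]^-$ to $[\mathrm{div}\,b - \partial_{ij}a_{ij}]^-$ in a topology that preserves the Gronwall inequality is exactly of the kind addressed by commutator estimates \emph{à la} DiPerna--Lions, and is where one ultimately invokes Theorem 6.7.4 of \cite{BKRS}.
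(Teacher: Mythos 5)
Your argument is correct in outline, but note that the paper does not actually prove this theorem: it is stated as a black box with the reference ``cf.\ Theorem 6.7.4 of \cite{BKRS}'', so what you have written is a reconstruction of the standard energy/Gronwall proof underlying that citation rather than a parallel to an argument in the text. Your core computation is right: rewriting the operator as $\partial_i(a_{ij}\partial_j\rho+\tilde b_i\rho)$ with $\tilde b_i=\partial_j a_{ij}-b_i$, testing against $r|\rho|^{r-2}\rho$, discarding the (degenerate) elliptic term by positive semidefiniteness of $(a_{ij})$ --- which is implicitly assumed in ``no strict parabolicity is needed'' --- and keeping only the one-sided part of $\operatorname{div}\tilde b$ gives exactly the Gronwall inequality; in fact your derivation yields the correct form $\|\rho(t)\|_{L^r}^r\le\|\rho_0\|_{L^r}^r\exp\bigl(\|(r-1)[\operatorname{div}b-\partial_{ij}a_{ij}]^-\|_{L^1(0,t;L^\infty)}\bigr)$, whereas the displayed estimate in the statement carries an apparent typo (a spurious factor $t$ multiplying the full $L^1_t(L^\infty_x)$ norm, and a missing $r$-th power on $\|\rho_0\|_{L^r}$). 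Two remarks on your ``delicate'' points. The third one is less delicate than you suggest: since spatial mollification commutes with derivatives, $\operatorname{div}b^\delta-\partial_{ij}a^\delta_{ij}$ is the mollification of the combined distribution, and convolution with a nonnegative kernel preserves the one-sided bound $[\,\cdot\,]^-\le\|[\,\cdot\,]^-\|_{L^\infty}$ with the same constant, so no DiPerna--Lions commutator machinery is needed for the a priori estimate itself; commutators would only enter if one wanted uniqueness/renormalization, which the theorem does not claim. What does deserve one more line is the identification of the weak-$*$ limit: you need $b^\delta\rho^\delta\to b\rho$ and $a^\delta_{ij}\rho^\delta\to a_{ij}\rho$ in $\mathcal D'$, which follows from strong $L^q_{\mathrm{loc}}$ convergence of the mollified coefficients paired with weak convergence of $\rho^\delta$ in $L^r_{\mathrm{loc}}$ --- and this is precisely where the duality $r=q/(q-1)$ is used, consistently with your opening observation.
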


\end{document}